%Anso and Fer edits on July 9, 2020
%Revise the use of e.g. Theorem 3.1 (a)
%Revise the use of e.g. ¿Must it be enclosed with commas?
%Changes after TAMS Report
%Version submitted for peer-review to TAMS on October 30, 2019
%Tuesday, September 17, we implement Steve's comments.
%Friday, September 13. Version sent to Steve and Denka
%Sunday, September 8, 2019. Missouri edit
%July 25, 2019. After Dilworth's comments/corrections sent on 19/7/19

\documentclass[12pt, reqno]{amsart}

\usepackage{amsmath}
\usepackage{amsthm}
\usepackage{amssymb}
\usepackage{amsrefs}
\usepackage{mathrsfs}
\usepackage[usenames]{color}
\usepackage{bm}
\usepackage{enumitem}
\newlist{altenumerate}{enumerate}{1}
\setlist*[altenumerate]{label=\textbf{(p\arabic*)}, resume=alt}

%THEOREM Environments-----------------------------------------

\newtheorem{thm}{}[section]
\newtheorem{theorem}[thm]{Theorem}
\newtheorem{corollary}[thm]{Corollary}
\newtheorem{lemma}[thm]{Lemma}
\newtheorem{proposition}[thm]{Proposition}

\theoremstyle{remark}
\newtheorem{definition}[thm]{Definition}

\newtheorem{remark}[thm]{Remark}

\newtheorem{problem}[thm]{Problem}

\numberwithin{equation}{section}
\allowdisplaybreaks

\newcommand{\DD}{\ensuremath{\mathbb{D}}}
\newcommand{\Fou}{\ensuremath{\mathcal{F}}}
\newcommand{\EE}{\ensuremath{\mathcal{E}}}
\newcommand{\AAA}{\ensuremath{\mathcal{A}}}
\newcommand{\PP}{\ensuremath{\mathcal{P}}}
\newcommand{\LL}{\ensuremath{\mathcal{L}}}

\newcommand{\YY}{\ensuremath{\mathbb{Y}}}
\newcommand{\GG}{\ensuremath{\mathcal{G}}}
\newcommand{\HH}{\ensuremath{\mathcal{H}}}
\newcommand{\JJ}{\ensuremath{\mathcal{J}}}
\newcommand{\BB}{\ensuremath{\mathcal{B}}}
\newcommand{\OO}{\ensuremath{\mathcal{O}}}
\newcommand{\SSB}{\ensuremath{\mathbb{S}}}
\newcommand{\ssb}{\ensuremath{\bm{s}}}

\newcommand{\uu}{\ensuremath{\bm{u}}}
\newcommand{\zz}{\ensuremath{\bm{z}}}
\newcommand{\ww}{\ensuremath{\bm{w}}}
\newcommand{\WW}{\ensuremath{\mathcal{W}}}
\newcommand{\NN}{\ensuremath{\mathbb{N}}}
\newcommand\rr{\ensuremath{\bm{r}}}
\newcommand\ee{\ensuremath{\bm{e}}}
\newcommand\xx{\ensuremath{\bm{x}}}
\newcommand\yy{\ensuremath{\bm{y}}}
\newcommand{\XX}{\ensuremath{\mathbb{X}}}
\newcommand{\QQ}{\ensuremath{\mathbb{Q}}}

\newcommand{\FF}{\ensuremath{\mathbb{F}}}
\newcommand{\dom} {\mathop{\mathrm{D}}}
\newcommand{\rang} {\mathop{\mathrm{R}}}
\newcommand{\supp} {\mathop{\mathrm{supp}}}

%------------------------------------------------------------------------
\begin{document}
%------------------------------------------------------------------------

%\title[Subsymmetric sequences in Garling spaces]{Subsymmetric sequences in Garling spaces}
\title[A dichotomy for subsymmetric bases]{A dichotomy for subsymmetric basic sequences with applications to Garling spaces}
%\title[The structure of subsymmetric basic sequences]{The structure of subsymmetric basic sequences with applications to Garling sequence spaces}

\author[F. Albiac]{F. Albiac}\address{Department of Mathematics, Statistics and Computer Sciences, and InaMat2\\ Universidad P\'ublica de Navarra\\
Pamplona 31006\\ Spain}
\email{fernando.albiac@unavarra.es}

\author[J. L. Ansorena]{J. L. Ansorena}\address{Department of Mathematics and Computer Sciences\\
Universidad de La Rioja\\
Logro\~no 26004\\ Spain}
\email{joseluis.ansorena@unirioja.es}

\author[S. Dilworth]{S. J. Dilworth}\address{Department of Mathematics\\
University of South Carolina\\
Columbia SC 29208 \\ USA}
\email{dilworth@math.sc.edu}

\author[Denka Kutzarova]{Denka Kutzarova}
\address{
Department of Mathematics University of Illinois at Urbana-Champaign\\
Urbana, IL 61801 \\ USA \\
and Institute of Mathematics and Informatics\\ Bulgarian Academy of Sciences\\
Sofia\\ Bulgaria.
}
\email{denka@math.uiuc.edu}

\subjclass[2010]{46B15, 46B20, 46B45}

\keywords{Garling spaces, Lorentz spaces, sequence spaces, symmetric basic sequence, subsymmetric basic sequence}

%-----------------------
\begin{abstract}

Our aim in this article is to contribute to the study of the structure of subsymmetric basic sequences in Banach spaces (even, more generally, in quasi-Banach spaces). For that we introduce the notion of \emph{positionings} and develop new tools which lead to a dichotomy theorem that holds for general spaces with subsymmetric bases. As an illustration of how to use this dichotomy theorem we obtain the classification of all subsymmetric sequences in certain types of spaces.
To be more specific, we show that Garling sequence spaces have a unique symmetric basic sequence but no symmetric basis and that these spaces have a continuum of subsymmetric basic sequences.
\end{abstract}

\thanks{F. Albiac acknowledges the support of the Spanish Ministry for Economy and Competitivity under Grant MTM2016-76808-P for \emph{Operators, lattices, and structure of Banach spaces}. F. Albiac and J.~L. Ansorena acknowledge the support of the Spanish Ministry for Science, Innovation, and Universities under Grant PGC2018-095366-B-I00 for \emph{An\'alisis Vectorial, Multilineal y Aproximaci\'on}. S.~J. Dilworth acknowledges the support from the National Science Foundation under Grant Number DMS--1361461. Denka Kutzarova acknowledges the support from the Simons Foundation Collaborative under Grant No 636954. F. Albiac, S. J. Dilworth and Denka Kutzarova would like to thank the Isaac Newton Institute, Cambridge, for support and hospitality during the programme \emph{Approximation, Sampling and Compression in Data Science}, where work on this paper was undertaken. This work was supported by ESPRC Grant no EP/K032208/1.}

\maketitle

\section{Introduction and background}\label{intro}
\noindent Let $\XX$ be a (real or complex) separable Banach space. One of the main problems in the isomorphic theory of Banach spaces is the classification of the basic sequences of a certain type in $\XX$. This question is formulated in a proper way using the notion of equivalence of basic sequences. Recall that a sequence $(\xx_j)_{j=1}^{\infty}$ in a Banach space is a basic sequence if it is a (Schauder) basis of its closed linear span; two basic sequences $(\xx_j)_{j=1}^{\infty}$ and $(\yy_j)_{j=1}^{\infty}$ in $\XX$ are said to be \emph{equivalent} provided a series $\sum_{j=1}^{\infty}a_{j}\, \xx_{j}$ converges in $\XX$ if and only if $\sum_{j=1}^{\infty}a_{j}\yy_{j}$ does.

The most important category of sequences in which this classification is studied is that of symmetric sequences. A basic sequence $(\xx_j)_{j=1}^\infty$ is \emph{symmetric} if the rearranged sequence $(\xx_{\pi(j)})_{j=1}^{\infty}$ is equivalent to $(\xx_j)_{j=1}^\infty$ for any permutation $\pi$ of $\NN$. This class of sequences includes the canonical unit vector basis of the $\ell_{p}$ spaces and $c_{0}$. Closely related to symmetry is the notion of subsymmetry. A basic sequence $(\xx_j)_{j=1}^\infty$ in $\XX$ is said to be \emph{subsymmetric} if it is unconditional, i.e., the rearranged sequence $(\xx_{\pi(j)})_{j=1}^{\infty}$ is also a basic sequence for any permutation $\pi$ of $\NN$, and $(\xx_{\phi(j)})_{j=1}^{\infty}$ is equivalent to $(\xx_j)_{j=1}^\infty$ for any increasing map $\phi\colon\NN\to\NN$.

The question whether a symmetric basic sequence exists in every Banach space was a driving force in the development of the theory for many decades. And, despite the fact that the question was solved in the negative by Tsirelson in 1974 (\cite{Cirel1974}) it motivated a plethora of new interesting problems.

The class of subsymmetric basic sequences is more general (see, e.g., \cites{Singer1962, KadetsPel1962}).
In practice, the only feature that one needs about symmetric basic sequences in many situations is their subsymmetry, to the extent that when symmetric bases were introduced these two concepts were believed to be equivalent until Garling \cite{Garling1968} provided a counterexample that disproved it.

However, subsymmetric bases, far from being just a capricious generalization of symmetric bases, played a relevant role by themselves within the general theory. Indeed, the study of Banach spaces with a non-symmetric subsymmetric basis led to the solution of major problems in the field. For example, the first arbitrarily distortable space constructed by Schlumprecht \cite{Schlumprecht1991} has one such basis. Other important landmarks whose attainment was inspired by techniques related to this concept are the solution of the unconditional basic sequence problem by Gowers and Maurey \cite{GM1993} and the distortion of $\ell_p$ spaces by Odell and Schlumprecht \cite{OS1994}. The construction of all these spaces is based on techniques that differ greatly from the aforementioned example of Garling. Besides, since subsymmetric bases are just unconditional spreading sequences, they appear naturally in those contexts where spreading models are applied.

For background, let us next briefly outline a few well-known facts and  milestone results in the classification of symmetric and subsymmetric basic sequences in Banach spaces.

Neither the space nowadays known as the original Tsirelson space nor its dual have any subsymmetric basic sequences (see \cites{FJ1974,Cirel1974}). Per contra, the unit vector system is
%a symmetric basis for $\XX=\ell_p$, $1\le p<\infty$, and $\XX=c_0$. Moreover, it is
the unique subsymmetric basic sequence of $\ell_p$ ($1\le p<\infty$) and $c_0$
%those spaces
(see, e.g., \cite{AAW2018}*{Proposition 2.14} and \cite{AK2016}*{Proposition 2.1.3}).

Kadec and Pe{\l}czy{\'n}ski proved in \cite{KadetsPel1962} that $L_p[0,1]$ does not have a subsymmetric basis for $1\le p<\infty$, $p\not=2$. The set of indices $q$ for which $L_p[0,1]$ has a basic sequence equivalent to the unit vector system of $\ell_q$ is the interval $[p,2]$ if $1\le p < 2 $ and the finite set $\{2,p\}$ if $2\le p<\infty$ (see, e.g., \cite{AK2016}*{Theorem 6.4.18}).

The classification of symmetric basic sequences in Lorentz sequence spaces $d(\ww,p)$, for $1\le p<\infty$, being $\ww=(w_n)_{n=1}^\infty\in c_0\setminus\ell_1$ a positive non-increasing weight, was obtained in \cite{ACL1973}. Here, Altshuler et al.\ proved that the space $d(\ww,p)$ has a unique symmetric basis and that the classification of its symmetric basic sequences depends on $\ww$. In the case when $\ww$ is submultiplicative, i.e.,
\[
\sup_{m,n}\frac{\displaystyle \sum_{n=1}^{m n} w_j} {\displaystyle \left(\sum_{j=1}^{m} w_j \right) \left(\sum_{j=1}^{n} w_j\right)}<\infty,
\]
the space $d(\ww,p)$ has exactly two symmetric (and subsymmetric) basic sequences, namely the unit vector bases of $d(\ww,p)$ and $\ell_p$; otherwise, $d(\ww,p)$ has more than two (non-equivalent) symmetric basic sequences, and there are examples of weights $\ww$ for which $d(\ww,p)$ has infinitely many symmetric basic sequences.

Let $h_F$ denote the separable part of the Orlicz sequence space $\ell_F$. It is known \cite{Lindberg1973} that every subsymmetric basic sequence in $h_F$ is equivalent to the unit vector system of an Orlicz space $h_G$. In particular, every subsymmetric basic sequence is symmetric. Lindenstrauss and Tzafriri showed in \cite{LT1971} that if $\lim_{t\to 0} tF^{\prime}(t)/F(t)$ exists then $h_F$ has a unique symmetric basis. In the same paper, an Orlicz sequence space with exactly two symmetric basic sequences is provided. The same authors gave in \cite{LT1973} a sufficient condition for $h_F$ to have uncountably many subsymmetric basic sequences. More recently, the article \cite{DilworthSari2008} contains an intricate construction of an Orlicz sequence space with a countably infinite collection of symmetric basic sequences. Sari's work \cite{Sari2007}, in turn, discusses the structure with respect to a domination order relation of the set of symmetric basic sequences in an Orlicz sequence space.

%A long standing open problem in Banach space theory, settled by Tsirelson in the negative, was to determine whether every Banach space had a copy of $\ell_p$ or $c_0$. As it happens, both the space nowadays known as the original Tsirelson space and its dual do not have any subsymmetric basic sequences (see \cites{FJ1974,Cirel1974}).

The examples above show that all possible situations that one can have a priori, actually do occur, namely:
\begin{enumerate}[label={(\alph*)}]
\item there are no symmetric basic sequences;

\item there is (up to equivalence) a unique symmetric basic sequence;

\item there exist finitely many non-equivalent symmetric basic sequences; and

\item there are infinitely many nonequivalent symmetric basic sequences.
\end{enumerate}

The case when a Banach space has a unique symmetric basic sequence deserves special attention. Note that amongst all the %afore-mentioned
aforementioned examples, only $\ell_p$ and $c_0$ have a unique symmetric basic sequence. Altshuler constructed in \cite{A1977} a Banach space with a symmetric basis which does not contain any subspace isomorphic to $\ell_p$ or $c_0$, and whose symmetric basic sequences are all equivalent. Another remarkable example was provided by Read \cite{R1981}. He answered a question of Lindenstrauss and Tzafriri by constructing a space with exactly two symmetric bases (up to equivalence). More precisely, Read proved that every symmetric basic sequence is equivalent either to the unit vector basis of $\ell_1$ or to one of the two symmetric bases of his space. It was remarked in \cite{KMP2012} that a careful look at the papers of Altshuler and Read shows that their proofs work similarly for the more general case of all subsymmetric basic sequences. Let us restate some of the questions raised in \cite{KMP2012} about the richness of subsymmetric sequences.

\begin{problem} Does there exist a Banach space in which all subsymmetric basic sequences are equivalent to one basis, and that basis is not symmetric?
\end{problem}

\begin{problem}
Does there exist a Banach space with, up to equivalence, precisely two subsymmetric bases, at least one of them is not symmetric?
\end{problem}

In addition, we may also ask the following.
\begin{problem}
Does there exist a Banach space with, up to equivalence, precisely $n$ subsymmetric basic sequences such that at least one of them is not symmetric? Here $n$ is a natural number greater than $1$.
\end{problem}

With an eye on this kind of questions, in Section~\ref{SubSymSection} we develop new tools which lead to a dichotomy theorem for general Banach spaces with a subsymmetric basis. We prove that every subsymmetric sequence in such a space is either of a certain canonical form or it dominates a normalized sequence that vanishes in the supremum norm. In the symmetric setting, this type of dichotomy has its roots at least as far back as 1973 in the work of Altshuler, Casazza, and Lin \cite{ACL1973}, where it is effectively used to study the structure of symmetric basic sequences in Lorentz sequence spaces. In the spreading model setting, this dichotomy was also present in the work of Beauzamy and Laprest\'e \cite{BL1984}. However, the subsymmetric case has not been understood that well. In order to bridge this gap in the theory we introduce the necessary novel notion of \emph{positionings} which yields our main result, namely the dichotomy theorem.

In Section~\ref{GarlingSection} the dichotomy theorem is used to investigating the structure of subsymmetric basic sequences in Garling-like sequence spaces. Specifically, we show that there exists an entire new class of spaces with a unique symmetric basic sequence. The Banach spaces we investigate here were introduced in \cite{AAW2018} by extending the pattern in the example of Garling cited above. Because of that they were called \emph{Garling sequence spaces}. Although Garling sequence spaces $g(\ww,p)$ for $1\le p<\infty$ and $\ww\in c_0\setminus\ell_1$, share with the spaces $\ell_p$ the property that each of their symmetric basic sequences is equivalent to the unit vector system of $\ell_p$, the reader should be warned that $g(\ww,p)$ is far from behaving like $\ell_p$. For instance, it is known (see \cites{AAW2018,AALW2018}) that Garling sequence spaces have a unique subsymmetric basis which is not symmetric, hence they possess no symmetric basis. The authors showed in \cite{AAW2018} that Garling sequence spaces behave in some sense like Lorentz sequence spaces, after which Garling sequence spaces are modeled replacing symmetry with subsymmetry. Showing that Garling sequence spaces have infinitely many nonequivalent subsymmetric basic sequences will evince that Garling sequence spaces and Lorentz sequence spaces have structures that are further apart than one may think.
%Our study of Garling sequence spaces relies on a ``rough'' classification of subsymmetric basic sequences which is valid in every Banach space having a subsymmetric basis.
The uniqueness of symmetric basic sequence will be proved in Section~\ref{GarlingSymSection}, while the construction of uncountably many subsymmetric basic sequences will be achieved in Section~\ref{GarlingSubSymSection}.

Although we are mainly interested in Banach spaces, for the sake of completeness we will make sure that our discussion remains valid in the more general setting of (not necessarily locally convex) quasi-Banach spaces. Throughout this note we use standard Banach space theory notation and terminology, as can be found, e.g., in \cite{AK2016}. For the convenience of the reader, though, we next single out the terminology that will be most heavily used. As is customary, we write $c_{00}$ for the space of all scalar sequences with finitely many nonzero entries. We will write $\FF$ for the real or complex field. Given a countable set $J$, $(\ee_j)_{j\in J}$ denotes the unit vector system on $J$, i.e., $\ee_j=(\delta_{k,j})_{k\in J}$ for every $j\in J$ where $\delta_{k,j}=1$ if $n=k$ and $\delta_{k,j}=0$ otherwise. The unit vector system on $J$ regarded inside a quasi-Banach space $\XX\subseteq \FF^J$, will be denoted by $\EE[\XX]$. The \emph{support} of $x=(a_j)_{j=1}^\infty\in\FF^J$ is the set $\supp(x)=\{ j\in J\colon a_j\not=0\}$. The \emph{coordinate projection} of $x\in\FF^J$ on $A\subseteq J$ is defined by $S_A(x)=(a_j \chi_A(j))_{j\in A}$. If $J=\NN$ and $n\in\NN$ $S_n\colon\FF^\NN\to\FF^\NN$ denotes the coordinate projection on the $n$ first coordinates. If $J$ is totally ordered and $x$, $y\in\FF^J$ are such that $i<j$ for every $i\in\supp(x)$ and $j\in\supp(y)$ we write $x\prec y$.

A basis of a quasi-Banach space $\XX$ is a sequence $\mathcal B= (\xx_j)_{j=1}^\infty$ in $\XX$ such that for every $x\in\XX$ there is unique sequence $(a_n)_{n=1}^\infty$ in $\FF$ such that $x=\sum_{j=1}^\infty a_j\, \xx_j$. If $(\xx_j)_{j=1}^\infty$ is a basis, the map $\xx_k^*\colon \XX\to \FF$ given by $\sum_{j=1}^\infty a_j\, \xx_j\mapsto a_k$ is called its \emph{$k$th coordinate functional}, and
\[
\Fou \colon\XX\to\FF^\NN, \quad x \mapsto (\xx_j^*(x))_{j=1}^\infty
\]
its \emph{coefficient transform}. The \emph{support} of $x$ with respect to the basis $\BB$ is the set $\supp(x):=
\supp(\Fou(x))$ of indices corresponding to its nonzero entries. Given $x$ and $y\in\XX$ we write $x\prec y$ if $\Fou(x)\prec \Fou(y)$. The closed linear span of a subset $Z$ of a quasi-Banach space $\XX$ will be denoted by $[Z]$. A basic sequence $(\yy_k)_{k=1}^\infty$ is \emph{semi-normalized} if
$
0<c=\inf_k \Vert \yy_k\Vert$ and $C =\sup_k \Vert \yy_k\Vert<\infty.
$
If $c=C=1$ the basic sequence is said to be normalized. Given two basic sequences $\BB_1=(\xx_k)_{k=1}^\infty$ and $\BB_2=(\yy_k)_{k=1}^\infty$ in quasi-Banach spaces $\XX$ and $\YY$, respectively, we say that $\BB_1$ \emph{dominates} $\BB_2$, and write $\BB_2 \lesssim \BB_1$, if there is a bounded linear operator $T\colon [\xx_k\colon k\in\NN] \to [\yy_k] \colon k\in\NN]$ such that $T(\xx_n)=\yy_n$ for all $n\in\NN$. Quantitatively, if $\Vert T\Vert \le C$ for some positive $C$ we say that $\BB_1$ $C$-\emph{dominates} $\BB_2$ and write $\BB_2\lesssim_C \BB_1$. Note that $\BB_1$ and $\BB_2$ are equivalent if and only if $\BB_2\lesssim \BB_1$ and $\BB_1\lesssim \BB_2$. If it is the case, we write $\BB_1\simeq \BB_2$.

If the basis $(\xx_j)_{j=1}^\infty$ is unconditional, then $\sum_{j=1}^\infty a_j\, \xx_j$ converges unconditionally for every $x=\sum_{j=1}^\infty a_j\, \xx_j\in\XX$. So, unconditional bases can be indexed with an infinite countable set instead of $\NN$. A \emph{quasi-Banach lattice} on a countable set $J$ is a quasi-Banach space $\XX\subseteq \FF^J$ such that $\Vert x\Vert \le \Vert y \Vert$ whenever $|x|\le |y|$. By a \emph{sign} we mean a scalar of modulus one. It is known that a linearly independent family $\BB=(\xx_j)_{j\in J}$ satisfying $[\BB]=\XX$ is unconditional if and only if for every sequence of signs $\varepsilon=(\epsilon_j)_{j\in J}$ there is a bounded linear operator $M_\varepsilon\colon \XX\to\XX$ such that $M_\epsilon(\xx_j)=\varepsilon_j\, \xx_j$ for every $j\in J$. In the case when $\Vert M_\varepsilon\Vert\le C$ the basis is said to be $C$-unconditional. Every unconditional basis is $C$-unconditional for some $C$, and the optimal constant $C=1$ is attained under renorming. Thus, an unconditional basis $(\xx_j)_{j\in J}$ induces a lattice structure on $\XX$: given $x$, $y\in\XX$, we say that $|x|\le |y|$ if $|\xx_j^*(x)|\le |\xx_j^*(y)|$ for every $j\in J$. Given $0<p<\infty$, a quasi-Banach lattice is said to be \emph{$p$-convex} if there is a constant $C$ such that
\[
\left\Vert \left(\sum_{k=1}^n |x_k|^p \right)^{1/p}\right\Vert \le C \left(\sum_{k=1}^n \Vert x_k\Vert^p \right)^{1/p}
\]
for every $n\in\NN$ and $(x_k)_{k=1}^n$ in $\XX$. The optimal constant $C$ will be called the $p$-convexity constant of the lattice. Notice that any Banach lattice is $1$-convex with constant $1$. Given $0<q<\infty$ the \emph{$q$-convexification} of the quasi-Banach lattice $\XX$ on $J$ is the quasi-Banach lattice consisting of all $x\in \FF^J$ such that $|x|^q\in \XX$, endowed with the quasi-norm $x\mapsto \Vert |x|^q\Vert^{1/q}$.

Given infinite-dimensional quasi-Banach spaces $\XX$ and $\YY$, $\LL(\XX,\YY)$ denotes the space of bounded linear operators from $\XX$ into $\YY$, and put $\LL(\XX)=\LL(\XX,\YY)$. The symbol $\XX\simeq \YY$ means that $\XX$ and $\YY$ are isomorphic.

We denote by $\Pi$ the set of all permutations of $\NN$ and by $\OO$ the set of increasing functions from $\NN$ into $\NN$. $\Pi_n$ will be the set of all permutations of $\NN[n]$, where
$\NN[n]$ is the set of the first $n$ natural numbers, i.e.,
\[
\NN[n]=\{1,\dots, k,\dots,n\}.
\]

Given a function $\phi$ we denote by $\dom(\phi)$ its domain and by $\rang(\phi)$ its range. Note that a function in $\OO$ is univocally determined by its range.

\section{A dichotomy for subsymmetric basic sequences}\label{SubSymSection}
\noindent
We start this section by introducing some definitions concerning a quasi-Banach space $\SSB$ with a subsymmetric basis $(\ssb_j)_{j=1}^\infty$.

\begin{definition}A quasi-norm $\Vert \cdot \Vert $ on $\SSB$ is A said to be subsymmetric with respect to $(\ssb_j)_{j=1}^\infty$ if
\begin{equation}\label{eq:subsym}
\Vert x\Vert= \left\Vert \sum_{j=1}^\infty \varepsilon_j \, \ssb_j^*(x) \, \ssb_{\phi(j)} \right\Vert,
\quad x\in\SSB, \, |\varepsilon_n |= 1, \, \phi\in\OO.
\end{equation}
In this case we will say that the basis $(\ssb_j)_{j=1}^\infty$ is $1$-subsymmetric. \end{definition}

A quasi-norm of $\SSB$ is subsymmetric if and only if the basis $(\ssb_j)_{j=1}^\infty$ is $1$-unconditional and for every increasing map $\phi\colon A\subseteq \NN\to \NN$ the linear operator $V_\phi\in\LL(\SSB)$ defined by
\begin{equation}\label{eq:Vphi}
x=\sum_{j=1}^\infty a_j \, \ssb_j\mapsto
V_\phi(x)=\sum_{j\in\dom(\phi)} a_j \, \ssb_{\phi(j)}
\end{equation}
satisfies $\Vert V_\phi\Vert\le 1$ (see \cite{Ansorena2018}). Notice that if $\phi\in\OO$ then $V_\phi$ is an isometric embedding.

Recall that a quasi-norm is said to be a $p$-norm, $0<p\le 1$, if it is $p$-subadditive, i.e.,
\[
\Vert x+y\Vert^p \le \Vert x\Vert^p +\Vert y \Vert^p, \quad x, y\in\SSB.
\]
Combining the Aoki-Rolewicz theorem (see, e.g., \cite{KPR}) with the techniques developed in \cite{Ansorena2018} yields that any quasi-Banach space with a subsymmetric basis can always be endowed with an equivalent $p$-norm which is subsymmetric with respect to the basis, for some $0<p\le 1$.

A \emph{block basic sequence} with respect to the basis $(\ssb_j)_{j=1}^\infty$ is a sequence $(\yy_k)_{k=1}^\infty$ of non-zero vectors such that $\yy_k\prec \yy_{k+1}$ for all $k\in\NN$. Block basic sequences are a particular type of basic sequences which play a key role in the theory of Banach spaces. As we next show, if we focus on subsymmetric basic sequences its role is even more significant.

\begin{lemma}\label{lem:boundedsupport} Let $\SSB$ be a quasi-Banach space with a subsymmetric basis $(\ssb_j)_{j=1}^\infty$. Suppose $(\yy_n)_{n=1}^\infty$ is a semi-normalized block basic sequence with respect to $(\ssb_j)_{j=1}^\infty$ such that
\[
N:=\sup_n |\supp(\yy_n)|<\infty.
\]
Then $\BB$ is equivalent to $(\ssb_j)_{j=1}^\infty$.
\end{lemma}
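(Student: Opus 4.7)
The plan is to establish two-sided domination between $(\yy_n)_{n=1}^\infty$ and $(\ssb_j)_{j=1}^\infty$. As a preliminary reduction I would invoke the remark following \eqref{eq:Vphi} to replace the quasi-norm on $\SSB$ by an equivalent $p$-norm (for some $0<p\le 1$) that makes $(\ssb_j)_{j=1}^\infty$ $1$-subsymmetric, hence $1$-unconditional. Write $\yy_n=\sum_{k\in E_n} c_{n,k}\,\ssb_k$ with $E_n:=\supp(\yy_n)$ satisfying $|E_n|\le N$ and $E_n \prec E_{n+1}$. Since $\|\ssb_k\|=\|\ssb_1\|$ by subsymmetry, $1$-unconditionality applied as a one-coordinate projection yields a uniform upper bound $|c_{n,k}|\le C/\|\ssb_1\|=:M$, where $C:=\sup_n\|\yy_n\|<\infty$.

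For the upper domination I would split $\yy_n$ by position: for each $s\in\{1,\dots,N\}$, let $k_{n,s}$ be the $s$th smallest element of $E_n$ when it exists, so $\yy_n=\sum_{s=1}^N c_{n,k_{n,s}}\ssb_{k_{n,s}}$ with zero summands when $|E_n|<s$. For fixed $s$, the indices $(k_{n,s})_n$ (where defined) are strictly increasing in $\NN$ because $E_n\prec E_{n+1}$, so $1$-subsymmetry combined with $|c_{n,k_{n,s}}|\le M$ and the lattice structure yields
\[
\Big\Vert\sum_n b_n c_{n,k_{n,s}}\ssb_{k_{n,s}}\Big\Vert=\Big\Vert\sum_n b_n c_{n,k_{n,s}}\ssb_n\Big\Vert\le M\Big\Vert\sum_n b_n\ssb_n\Big\Vert.
\]
Assembling the $N$ position-sums via $p$-subadditivity then gives $\|\sum_n b_n\yy_n\|\le N^{1/p}M\|\sum_n b_n\ssb_n\|$.

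The main step is the reverse domination, which demands extracting a uniformly nonzero coefficient from each $\yy_n$. Setting $c:=\inf_n\|\yy_n\|>0$ and using $p$-subadditivity together with $\|\ssb_k\|=\|\ssb_1\|$,
\[
c^p\le\|\yy_n\|^p\le\sum_{k\in E_n}|c_{n,k}|^p\|\ssb_1\|^p\le N\,\|\ssb_1\|^p\,\max_{k\in E_n}|c_{n,k}|^p,
\]
so the pigeonhole principle supplies $k_n\in E_n$ with $|c_{n,k_n}|\ge \delta:=c/(N^{1/p}\|\ssb_1\|)$. Because the $E_n$'s are pairwise disjoint (being successive), $A:=\{k_n:n\in\NN\}$ meets each $E_n$ in the singleton $\{k_n\}$, and the coordinate projection $S_A$, which has norm $\le 1$ by $1$-unconditionality, sends $\yy_n$ to $c_{n,k_n}\ssb_{k_n}$. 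Since $(k_n)_n$ is strictly increasing in $\NN$, $1$-subsymmetry together with the lattice bound $|c_{n,k_n}|\ge \delta$ yields
\[
\Big\Vert\sum_n b_n\yy_n\Big\Vert\ge\Big\Vert\sum_n b_n c_{n,k_n}\ssb_{k_n}\Big\Vert=\Big\Vert\sum_n b_n c_{n,k_n}\ssb_n\Big\Vert\ge \delta\Big\Vert\sum_n b_n\ssb_n\Big\Vert,
\]
closing the equivalence. The only delicate point is this pigeonhole extraction, which essentially relies on both the lower semi-normalization and the a priori bound $|E_n|\le N$; the rest is a routine invocation of $1$-subsymmetry and the lattice structure.
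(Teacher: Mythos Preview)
Your proof is correct and follows essentially the same approach as the paper's: after renorming to a $1$-subsymmetric $p$-norm, both arguments split each $\yy_n$ according to the position of each coefficient within $\supp(\yy_n)$ to obtain the upper domination via $p$-subadditivity, and both extract one coordinate of size at least $c N^{-1/p}$ from each block (your pigeonhole step) to get the lower domination via $1$-unconditionality and subsymmetry. Your exposition is slightly more explicit about the lattice bounds and the role of $\|\ssb_1\|$, but the ideas coincide.
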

\begin{proof} Without loss of generality we assume that $\SSB$ is equipped with a $1$-subsymmetric $p$-norm. For every $n\in\NN$ we have
\[
c:=\inf_m \Vert \yy_m\Vert
\le \left(\sum_{j\in \supp(\yy_n)} |\ssb_j^*(\yy_n)|^p\right)^{1/p}
\le N^{1/p} \sup_{j\in \supp(\yy_n)} |\ssb_j^*(\yy_n)|.
\]
Hence, there is $\phi\colon\NN\to\NN$ such that $|\ssb_{\phi(n)}^*(\yy_n)|\ge D:=c^{-1}N^{1/p}$ for every $n\in\NN$. By subsymmetry,
\[
(\ssb_j)_{j=1}^\infty\lesssim_1(\ssb_{\phi(n)})_{n=1}^\infty \lesssim_D (\yy_n)_{n=1}^\infty.
\]
For each $k=1$, \dots, $N$, let $A_k=\{ n \in\NN \colon |\supp(\yy_n)|\ge k\}$. There is $(\phi_k)_{k=1}^N$ such that $\dom(\phi_k)=A_k$ and $\supp(\yy_n)=\{\phi_k(n) \colon n \in A_k\}$.
Put $C=\sup_m \Vert \yy_m\Vert$. Then for any $(a_n)_{n=1}^\infty\in c_{00}$,
\[
\left\Vert \sum_{n=1}^\infty a_n \, \yy_n\right\Vert^p
\le D^p\sum_{k=1}^N
\left\Vert \sum_{n\in A_k}^\infty a_n \, \ssb_{\phi_k(n)}\right\Vert^p
\le N D^p \left\Vert \sum_{n=1}^\infty a_n \, \ssb_n\right\Vert^p.
\]
Hence, if $E:=N^{1/p} C$ we obtain $(\yy_n)_{n=1}^\infty\lesssim_E (\ssb_j)_{j=1}^\infty$.
\end{proof}

\begin{proposition}\label{prop:1}Let $(\yy_n)_{n=1}^\infty$ be a subsymmetric basic sequence in a quasi-Banach space with a basis $(\xx_j)_{j=1}^\infty$. Then $(\yy_n)_{n=1}^\infty$ is equivalent to a block basic sequence of $(\xx_j)_{j=1}^\infty$.
\end{proposition}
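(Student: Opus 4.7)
The strategy is a gliding-hump argument applied not to $(\yy_n)_{n=1}^\infty$ itself but to a differenced copy of it, which will be designed so that its coefficients with respect to $(\xx_j)_{j=1}^\infty$ tend to zero. By the remark following the definition of subsymmetry, I may assume that $[\yy_n]$ is endowed with an equivalent $p$-norm ($0<p\le 1$) for which $(\yy_n)_{n=1}^\infty$ is $1$-subsymmetric. Since $(\yy_n)_{n=1}^\infty$ is semi-normalized and the functionals $\xx_j^*$ are bounded, each scalar sequence $(\xx_j^*(\yy_n))_{n=1}^\infty$ is bounded, and a diagonal argument provides an increasing sequence $(n_k)_{k=1}^\infty$ such that $\lim_k \xx_j^*(\yy_{n_k})$ exists for every $j\in\NN$. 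Define
\[
\ww_k=\yy_{n_{2k-1}}-\yy_{n_{2k}},\qquad k\in\NN,
\]
so that $\lim_k \xx_j^*(\ww_k)=0$ for every $j$.

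The key step is to show $(\ww_k)\simeq(\yy_n)$. Let $\phi\in\OO$ be given by $\phi(2k-1)=n_{2k-1}$ and $\phi(2k)=n_{2k}$; since $V_\phi$ is an isometric embedding, the equivalence reduces to comparing $\|\sum_k a_k(\yy_{2k-1}-\yy_{2k})\|$ with $\|\sum_k a_k\yy_k\|$. The upper estimate $\|\sum_k a_k(\yy_{2k-1}-\yy_{2k})\|\le 2^{1/p}\|\sum_k a_k\yy_k\|$ follows from $p$-subadditivity together with the isometries $V_\phi$ for $\phi(k)=2k-1$ and $\phi(k)=2k$. The reverse bound $\|\sum_k a_k\yy_k\|\le\|\sum_k a_k(\yy_{2k-1}-\yy_{2k})\|$ comes from the fact that the coordinate projection onto $A=\{2k-1:k\in\NN\}$ has norm at most $1$ by $1$-unconditionality, maps $\sum_k a_k(\yy_{2k-1}-\yy_{2k})$ to $\sum_k a_k\yy_{2k-1}$, and then $V_\phi$ with $\phi(k)=2k-1$ identifies the norm of the latter with $\|\sum_k a_k\yy_k\|$.

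Since $\xx_j^*(\ww_k)\to 0$ for every $j$, a standard Bessaga--Pe{\l}czy\'nski inductive construction with respect to the partial-sum projections of $(\xx_j)_{j=1}^\infty$ extracts a further subsequence $(\ww_{k_l})_{l=1}^\infty$ and a block basic sequence $(\zz_l)_{l=1}^\infty$ of $(\xx_j)_{j=1}^\infty$ with $\|\ww_{k_l}-\zz_l\|$ as small as desired; the principle of small perturbations, valid in quasi-Banach spaces, then yields $(\ww_{k_l})\simeq(\zz_l)$. Since equivalence of basic sequences preserves subsymmetry, $(\ww_k)_{k=1}^\infty$ is itself subsymmetric, so the spreading property gives $(\ww_k)\simeq(\ww_{k_l})$; chaining the equivalences yields $(\yy_n)\simeq(\ww_k)\simeq(\ww_{k_l})\simeq(\zz_l)$, as required. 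The main obstacle I foresee is the quantitative control in the gliding-hump step: in the quasi-Banach setting, the smallness required by the perturbation principle depends on the basis constant of $(\xx_j)_{j=1}^\infty$ and on the tail norms of $\ww_k$ in that basis, and these have to be controlled simultaneously along the inductive construction, whereas the differencing reduction is essentially algebraic once the $1$-subsymmetric renorming is in place.
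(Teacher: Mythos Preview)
Your proof is correct and follows essentially the same route as the paper: a diagonal extraction so that the coefficients converge, the differenced sequence $\ww_k=\yy_{n_{2k-1}}-\yy_{n_{2k}}$ to kill those limits, gliding hump plus small perturbations to pass to a block basic sequence, and then subsymmetry to undo the subsequence step. The only difference is how you establish $(\ww_k)\simeq(\yy_n)$: the paper simply invokes Lemma~\ref{lem:boundedsupport} (applied inside $[\yy_n]$ with the subsymmetric basis $(\yy_n)$, noting that $(\ww_k)$ is a block basic sequence of $(\yy_n)$ with supports of size~$2$), whereas you write out the two inequalities by hand using $1$-unconditionality and the isometries $V_\phi$. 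Your direct computation is exactly the $N=2$ instance of that lemma's proof, so the arguments coincide in substance.
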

\begin{proof}Since $(\yy_n)_{n=1}^\infty$ is semi-normalized, $(\xx_j^*(\yy_n))_{n=1}^\infty$ is bounded for every $j\in\NN$. Using Cantor's diagonal argument there is $\phi\in\OO$ such that
$(\xx_j^*(\yy_{\phi(n)}))_{n=1}^\infty$ converges for every $j\in\NN$. Set
\[
\zz_n=\yy_{\phi(2n)}- \yy_{\phi(2n-1)}, \quad n\in\NN.
\]
Since $\lim_j \xx_j^*(\zz_n)=0$, by the gliding hump technique, $(\zz_n)_{n=1}^\infty$ has a subsequence equivalent to a block basic sequence of $(\xx_j)_{j=1}^\infty$. By Lemma~\ref{lem:boundedsupport}, $(\zz_n)_{n=1}^\infty$ is equivalent to $(\yy_n)_{n=1}^\infty$, then it is subsymmetric and we are done.
\end{proof}

\subsection{Basic sequences generated by a seed} Among block basic sequences, basic sequences generated by a vector are of particular interest when studying the geometry of Banach spaces with a symmetric basis (see \cite{ACL1973}). If the basis is merely subsymmetric, we need a slightly more sophisticated concept. Roughly speaking, basic sequences generated by a vector are constructed by means of a recursive process consisting of adding coefficients to a vector at each step. The idea is that if the basis is not symmetric the position where the new coefficients are placed plays a significant role. As we intend to add larger coefficients first, our process connects with methods from greedy approximation with respect to bases, whose language we borrow. 

Given a Banach space $\SSB$ with a basis $(\ssb_j)_{j=1}^\infty$, we define the \emph{greedy ordering} of a vector $x\in \SSB$ as the unique one-to-one map $\rho\colon\NN\to\NN$ satisfying
\begin{itemize}
\item $(|\ssb_{\rho(j)}^*(x)|)_{j=1}^\infty$ is non-increasing;
\item if $j\le k$ and $|\ssb_{\rho(j)}^*(x)|=|\ssb_{\rho(k)}^*(x)|$, then $\rho(j)\le \rho(k)$;
\item if $\supp(x)$ is finite then $\rho(\NN)=\NN$; and
\item if $\supp(x)$ is infinite, then $\rho(\NN)=\supp(x)$.
\end{itemize}
If $\rho$ is the greedy ordering of $x$ and $n\in\NN$, we define the \emph{$n$th greedy set} of $x$ by
\[
A_n(x)=\{\rho(j) \colon 1\le j \le n\}
\]
and the \emph{$n$th greedy sum} of $x=\sum_{j=1}^\infty a_j \, \ssb_j$ by
\[
\GG_n(x)=\sum_{j=1}^n a_{\rho(j)} \, \ssb_{\rho(j)}=\sum_{j\in A_n(x)} a_j\, \ssb_j.
\]

We say that $y\in\SSB$ is a \emph{right-shift} of $x\in\SSB$ if there is $\phi\in\OO$ such that $V_\phi(x)=y$, where $V_\phi$ is defined as in \eqref{eq:Vphi}. If the basis $(\ssb_j)_{j=1}^\infty$ is symmetric, then a block basic sequence $(\xx_n)_{n=1}^\infty$ such that
$\xx_n$ is a right-shift of $\GG_n(x)$ for all $n\in\NN$ is equivalent to a block basic sequence $(\yy_n)_{n=1}^\infty$ such that $\yy_n$ is is a right-shift of
\[
\HH_n(x):=\sum_{j=1}^n a_{\rho(j)} \, \ssb_{j}
\]
for all $n\in\NN$. If the basis $(\ssb_j)_{j=1}^\infty$ is merely subsymmetric, the sequence $(\HH_n(x))_{n=1}^\infty$ no longer does the job. Nonetheless, for each $n\in\NN$ there is a permutation $\pi_n$ of $\NN[n]$ such that $\GG_n(x)$ is a right-shift of
\[
\JJ_n(x):=\sum_{j=1}^n a_{\rho(j)} \, \ssb_{\pi_n(j)}
\]
for all $n\in\NN$. Then, the block basic sequence $(\zz_n)_{n=1}^\infty$ is equivalent to $(\xx_n)_{n=1}^\infty$ provided that $\zz_n$ is a right-shift of $\JJ_n(x)$ for all $n\in\NN$. This idea naturally leads to introducing the notions of a positioning and a seed.

Throughout this section and the forthcoming Section \ref{sect:22} we will deal with a quasi-Banach space $\SSB$ equipped with a subsymmetric basis $(\ssb_j)_{j=1}^\infty$, and we will assume, without loss of generality, that the quasi-norm in $\SSB$ is $p$-subadditive and subsymmetric. In this case, $V_\phi$ is an isometric embededing for all $\phi\in\OO$ and, hence, $\Vert y\Vert=\Vert x\Vert$ whenever $y$ is a right-shift of $x\in\SSB$.

\begin{definition}
A \emph{positioning} will be a sequence $(d_n)_{n=1}^\infty$ such that $d_n\in\NN[n]$ for all $n\in\NN$. Given a positioning $\eta=(d_n)_{n=1}^\infty$ we recursively define a sequence $\pi[\eta]=(\pi_n)_{n=1}^\infty$ with $\pi_n\in\Pi_n$ for all $n\in\NN$. Starting with $\pi_0=\emptyset$ we put
\[
\pi_n(j)=\begin{cases} \pi_{n-1}(j) & \text{ if } \pi_{n-1}(j)<d_n, \\ 1+\pi_{n-1}(j) & \text{ if } \pi_{n-1}(j)\ge d_n, \\ d_n & \text{ if } j=n.\end{cases}
\]
A \emph{seed} will be a pair $(f,\eta)$, where $f$ is a sequence of scalars and $\eta$ is a positioning. Given a seed $(f,\eta)$ we recursively define the families $\uu[f,\eta]=(u_n)_{n=1}^\infty$
and $\uu_l[f,\eta]=(u_{m,n})_{1\le m \le n}$ in $\SSB$ by
\[
u_{m,n}=\sum_{j=1}^m a_j \, \ssb_{\pi_n(j)}, \quad u_n=u_{n,n}=\sum_{j=1}^n a_j \, \ssb_{\pi_n(j)},
\]
where $\pi[\eta]=(\pi_n)_{n=1}^\infty$ and $f=(a_j)_{j=1}^\infty$.
\end{definition}

Note that if $\eta=(d_n)_{n=1}^\infty$ is a positioning and $\pi[\eta]=(\pi_n)_{n=1}^\infty$, then
\begin{equation}\label{eq:etafrompi}
d_n=|\{j\in \NN[n] \colon \pi_n(j)\le \pi_n(n)\}|, \quad n\in\NN.
\end{equation}

In the case when $\eta=(d_n)_{n=1}^\infty$ is the standard positioning given by $d_n=n$ for $n\in\NN$ we put $\uu[f]=\uu[f,\eta]$ and $\uu_l[f]=\uu_l[f,\eta]$. Note that in this particular case, $(u_n)_{n=1}^\infty$ is the sequence of partial sums of the formal series $\sum_{j=1}^\infty a_j\, \ssb_j$ and $u_{m,n}=u_m$ for every $m\le n$.

The following result summarizes some early properties of the concepts we have defined.
%We say that $g\in\SSB$ is a \emph{right-shift} of $f\in\SSB$ if there is $\phi\in\OO$ such that $V_\phi(f)=g$.
%In this case, since $V_\phi$ is an isometric embededing, we have $\Vert g\Vert=\Vert f\Vert$.

\begin{lemma}\label{lemma:first}Let $\eta$ be a positioning. Given $f$, $\tilde f\in\FF^\NN$,
put
\[
\uu_l[f,\eta]=(u_{m,n})_{m\le n},\; \uu[f,\eta]=(u_n)_{n=1}^\infty,\; \text{and}\;\, \uu[\tilde f, \eta]=(\tilde u_n)_{n=1}^\infty.\]
Then:
\begin{enumerate}[label={(\alph*)}]
\item\label{lemma:first:a} $\supp(u_{m,n})\subseteq\NN[n]$ for all $m\le n$.

\item\label{lemma:first:b} If $m\in\NN$ then $\uu[f-S_m(f),\eta]=(u_{n}-u_{m,n})_{n=1}^\infty$ (with the convention that $u_{m,n}=u_n$ for $m>n$).

\item\label{lemma:first:c} If $|\tilde f|\le |f|$ then $\vert \tilde u_n\vert \le \vert u_n\vert$ for all $n\in\NN$. Consequently, $\Vert \tilde u_n\Vert \le \Vert u_n\Vert$ for all $n\in\NN$.

\item\label{lemma:first:d} If $m \le n $, then $u_{m,n}$ is a right-shift of $u_{m}$.

\item\label{lemma:first:e} $(\Vert u_n\Vert)_{n=1}^\infty$ is a non-decreasing sequence.

\item\label{lemma:first:f} $(\Vert u_{n}-u_{m,n} \Vert)_{0\le m\le n}$ increases with $n$ and decreases with $m$ (with the convention that $u_{0,n}=0$).
\end{enumerate}
\end{lemma}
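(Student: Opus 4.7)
The plan is to dispatch parts (a)--(c) by direct inspection of the defining formulas, isolate the main combinatorial content in part (d), and then deduce (e) and (f) from (d) together with subsymmetry and disjoint-support arguments. Part (a) is immediate since $\pi_n \in \Pi_n$ has range $\NN[n]$, so every index $\pi_n(j)$ appearing in $u_{m,n}$ lies in $\NN[n]$. For (b), writing $f - S_m(f) = (b_j)_{j=1}^\infty$ with $b_j = 0$ for $j \le m$ and $b_j = a_j$ otherwise, the $n$th vector of $\uu[f - S_m(f), \eta]$ is $\sum_{j=1}^n b_j\, \ssb_{\pi_n(j)}$, which equals $u_n - u_{m,n}$ when $n \ge m$ and equals $0$ when $n < m$, matching the stated convention. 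Part (c) follows because $\ssb_{\pi_n(j)}^*(u_n) = a_j$ and $\ssb_{\pi_n(j)}^*(\tilde u_n) = \tilde a_j$ for $j \in \NN[n]$, while all coefficients outside $\pi_n(\NN[n])$ vanish in both vectors; hence $|\tilde u_n| \le |u_n|$ coordinate-wise, and $1$-unconditionality yields the norm inequality.

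The heart of the proof is part (d). I will establish by induction on $n \ge m$ the claim that for all $j, k \in \NN[m]$, one has $\pi_m(j) < \pi_m(k)$ if and only if $\pi_n(j) < \pi_n(k)$. The base case $n = m$ is trivial, and for the inductive step the recursive definition shows that the map $\tau_{n+1}\colon \NN[n] \to \NN[n+1] \setminus \{d_{n+1}\}$ given by $\tau_{n+1}(x) = x$ for $x < d_{n+1}$ and $\tau_{n+1}(x) = x+1$ for $x \ge d_{n+1}$ is strictly increasing and satisfies $\pi_{n+1}(j) = \tau_{n+1}(\pi_n(j))$ for all $j \in \NN[n]$, so the relative order of $\pi_n(j)$ and $\pi_n(k)$ is preserved when passing to $\pi_{n+1}$. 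Granting the claim, the correspondence $\pi_m(j) \mapsto \pi_n(j)$ with $j \in \NN[m]$ is a strictly increasing injection from $\NN[m] = \pi_m(\NN[m])$ into $\NN[n]$, so it extends to some $\phi \in \OO$; since the coefficients of $u_m$ vanish off $\NN[m]$, we have $V_\phi(u_m) = u_{m,n}$, which says precisely that $u_{m,n}$ is a right-shift of $u_m$.

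Parts (e) and (f) then follow quickly from (d) and disjoint-support arguments. For (e), write $u_{n+1} = u_{n,n+1} + a_{n+1}\,\ssb_{d_{n+1}}$; since $\pi_{n+1}$ is a bijection of $\NN[n+1]$ with $\pi_{n+1}(n+1) = d_{n+1}$, the two summands have disjoint supports, so $1$-unconditionality gives $\|u_{n,n+1}\| \le \|u_{n+1}\|$, while (d) combined with the right-shift isometry on $\SSB$ gives $\|u_n\| = \|u_{n,n+1}\|$. For (f), monotonicity in $n$ with $m$ fixed is precisely (e) applied to the sequence $\uu[f - S_m(f), \eta] = (u_n - u_{m,n})_{n=1}^\infty$ identified in (b); monotonicity in $m$ with $n$ fixed follows from the disjoint-support decomposition $u_n - u_{m,n} = (u_n - u_{m+1,n}) + a_{m+1}\,\ssb_{\pi_n(m+1)}$ together with $1$-unconditionality. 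The only conceptual obstacle is the order-preservation claim inside (d), but it is essentially forced by the recursive structure of the positioning.
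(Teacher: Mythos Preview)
Your proof is correct and follows essentially the same route as the paper: parts (a)--(c) are dismissed as immediate, (d) is proved by induction via the order-preserving one-step map $\pi_n(j)\mapsto\pi_{n+1}(j)$ (the paper phrases this as $V_{\sigma_{d_n}}(u_{m,n-1})=u_{m,n}$ with $\sigma_j\in\OO$ having range $\NN\setminus\{j\}$, which is your $\tau_n$), and (e)--(f) are deduced from (d) combined with $1$-unconditionality and the identification in (b). The only cosmetic difference is that for the monotonicity in $m$ in (f) the paper appeals to (c) applied to $|f-S_{m+1}(f)|\le |f-S_m(f)|$, whereas you give a direct disjoint-support argument; both are equally short.
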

\begin{proof} \ref{lemma:first:a}, \ref{lemma:first:b}, and \ref{lemma:first:c} are clear.

\ref{lemma:first:d} For each $j\in\NN$, let $\sigma_j\in\OO$ be such that $R(\sigma_j)=\NN\setminus\{ j\}$. Then,
if $m\le n-1$ and $\eta=(d_n)_{n=1}^\infty$,
\[
V_{\sigma_{d_n}}(u_{m,n-1})=u_{m,n}.
\]
From here, we proceed by induction.

\ref{lemma:first:e} Let $m\le n$. Using \ref{lemma:first:d} and $1$-uncondionality we get
\[\Vert u_m\Vert=\Vert u_{m,n}\Vert\le \Vert u_n\Vert.\]

%\ref{lemma:first:e} follows from \ref{lemma:first:d}, and
\ref{lemma:first:f} follows from \ref{lemma:first:b}, \ref{lemma:first:c} and \ref{lemma:first:e}.
\end{proof}
Let $(f, \eta)$ be a seed and, as usual, put $\uu[f, \eta]=(u_n)_{n=1}^\infty$ and
$\uu_l[f, \eta]=(u_{m,n})_{n=1}^\infty$. With Lemma~\ref{lemma:first}~\ref{lemma:first:e} and Lemma~\ref{lemma:first}~\ref{lemma:first:f} in mind we define
\[
E_0[f,\eta]=\sup_n \Vert u_n\Vert=\lim_n \Vert u_n\Vert,
\]
and, for $m\in\NN$,
\[
E_m[f,\eta]=\sup_{n\ge m} \Vert u_{n}-u_{m,n}\Vert=\lim_n \Vert u_{n}-u_{m,n}\Vert.
\]
Note that $(E_m[f,\eta])_{m=0}^\infty$ is non-increasing. We have
\[
E_\infty[f,\eta]=\inf_m E_m[f,\eta]=\lim_m E_m[f,\eta].
\]

\begin{proposition} Let $(f,\eta)$ be a seed. Then $E_0[f,\eta]<\infty$ if and only if $E_\infty[f,\eta]<\infty$.
\end{proposition}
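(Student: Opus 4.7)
The equivalence reduces to two one-sided inequalities once one observes that $u_{m,n}$ is a right-shift of $u_m$ (Lemma~\ref{lemma:first}~\ref{lemma:first:d}), so $\|u_{m,n}\|=\|u_m\|$, and that the quasi-norm on $\SSB$ is $p$-subadditive for some $0<p\le 1$. My plan is to exploit these two ingredients separately in each direction; no delicate limiting argument is needed.

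For the easy implication, assume $E_0[f,\eta]<\infty$. For every $m\le n$, the $p$-subadditivity yields
\[
\|u_n - u_{m,n}\|^p \le \|u_n\|^p + \|u_{m,n}\|^p = \|u_n\|^p + \|u_m\|^p \le 2E_0[f,\eta]^p.
\]
Taking suprema over $n\ge m$ gives $E_m[f,\eta]\le 2^{1/p}E_0[f,\eta]$ for every $m\in\NN$, hence $E_\infty[f,\eta]\le 2^{1/p}E_0[f,\eta]<\infty$.

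For the nontrivial direction, suppose $E_\infty[f,\eta]<\infty$. Since $(E_m[f,\eta])_{m=0}^\infty$ is non-increasing (Lemma~\ref{lemma:first}~\ref{lemma:first:f}) and its infimum is $E_\infty[f,\eta]$, there exists $m_0\in\NN$ with $E_{m_0}[f,\eta]<\infty$. For every $n\ge m_0$, write $u_n=u_{m_0,n}+(u_n-u_{m_0,n})$. Applying $p$-subadditivity and the right-shift invariance of the norm, we get
\[
\|u_n\|^p \le \|u_{m_0,n}\|^p + \|u_n-u_{m_0,n}\|^p = \|u_{m_0}\|^p + \|u_n-u_{m_0,n}\|^p \le \|u_{m_0}\|^p + E_{m_0}[f,\eta]^p.
\]
Since the vectors $u_1,\dots,u_{m_0-1}$ contribute only finitely many extra values, this proves that $\sup_n \|u_n\|<\infty$, i.e., $E_0[f,\eta]<\infty$.

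The only point requiring any thought is the backward implication, and there the main obstacle is realizing that $\|u_{m,n}\|$ does not grow with $n$ because $u_{m,n}$ is a right-shift of the fixed vector $u_m$; once that is in place, a single application of $p$-subadditivity closes the argument.
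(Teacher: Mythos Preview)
Your proof is correct and, for the nontrivial direction, follows exactly the paper's argument: pick $m$ with $E_m[f,\eta]<\infty$, split $u_n=u_{m,n}+(u_n-u_{m,n})$, and use $\|u_{m,n}\|=\|u_m\|$ together with subadditivity to bound $\|u_n\|$ uniformly. The only difference is in the easy direction: the paper dispatches it in one line via the monotonicity $E_\infty[f,\eta]\le E_0[f,\eta]$ (established just before the proposition), whereas you run a separate $p$-subadditivity estimate; your route works but is unnecessary.
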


\begin{proof} Since $E_\infty[f,\eta]\le E_0[f,\eta]$, we need only prove the ``if'' part. Assume that $E_\infty[f,\eta]<\infty$. Then $E_m[f,\eta]<\infty$ for some $m$. By Lemma~\ref{lemma:first}~\ref{lemma:first:d}, if $\uu_l[f,\eta]=(u_{m,n})_{m\le n}$ and $\uu[f,\eta]=(u_n)_{n=1}^\infty$,
\[
\Vert u_n\Vert\le \Vert u_{n}-u_{m,n} \Vert+\Vert u_{m,n} \Vert=\Vert u_{n}-u_{m,n} \Vert+\Vert u_m\Vert.
\]
Therefore $E_0[f,\eta]\le E_m[f,\eta]+\Vert u_m\Vert<\infty$.
\end{proof}

\begin{lemma} If $E_\infty[f,\eta]=0$, then $f\in c_0$. \end{lemma}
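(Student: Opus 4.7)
The plan is to proceed by contraposition: assume $f=(a_j)_{j=1}^\infty\notin c_0$ and show that $E_\infty[f,\eta]>0$. Since $f\notin c_0$, there exist a constant $\epsilon>0$ and a strictly increasing sequence of indices $j_1<j_2<\cdots$ with $|a_{j_k}|\ge\epsilon$ for every $k$. I would then try to exhibit, for every $m\in\NN$, a single index $n\ge m$ at which $\Vert u_n-u_{m,n}\Vert$ is bounded below by a constant independent of $m$.

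Concretely, fix $m\in\NN$ and choose $k$ so that $j_k>m$; take $n=j_k$. From the definition of the seed,
\[
u_n-u_{m,n}=\sum_{j=m+1}^n a_j\,\ssb_{\pi_n(j)},
\]
and one of the summands is $a_{j_k}\,\ssb_{\pi_n(j_k)}$. Since the paper's standing assumption is that the quasi-norm is $1$-subsymmetric (in particular $1$-unconditional), applying the coordinate projection onto the single index $\pi_n(j_k)$ is a norm-one operation, so $\Vert u_n-u_{m,n}\Vert\ge |a_{j_k}|\,\Vert\ssb_{\pi_n(j_k)}\Vert$. Next I would invoke $1$-subsymmetry to identify $\Vert\ssb_i\Vert$ with the common positive value $s:=\Vert\ssb_1\Vert$ for every $i\in\NN$: indeed, choosing any $\phi\in\OO$ with $\phi(1)=i$ makes $V_\phi$ an isometric embedding with $V_\phi(\ssb_1)=\ssb_i$. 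Thus $\Vert u_n-u_{m,n}\Vert\ge \epsilon s$, and because $n\ge m$ is admissible in the defining supremum of $E_m[f,\eta]$, we obtain $E_m[f,\eta]\ge\epsilon s$.

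Since $m$ was arbitrary, this yields $E_\infty[f,\eta]=\inf_m E_m[f,\eta]\ge\epsilon s>0$, contradicting the hypothesis $E_\infty[f,\eta]=0$. There is no serious obstacle here; the only point that requires care is the use of $1$-unconditionality to extract a single coordinate from the tail sum $u_n-u_{m,n}$, and the use of $1$-subsymmetry to normalize the basis vectors. Both facts are already incorporated in the standing assumption that the quasi-norm of $\SSB$ is $p$-subadditive and subsymmetric with respect to $(\ssb_j)_{j=1}^\infty$.
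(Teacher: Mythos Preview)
Your proof is correct and uses essentially the same idea as the paper: extract a single coordinate from the tail sum $u_n-u_{m,n}$ via $1$-unconditionality, and use $1$-subsymmetry to normalize the basis vectors. The paper argues directly rather than by contraposition, obtaining the slightly sharper pointwise inequality $|a_j|\le\Vert u_j-u_{j-1,j}\Vert\le E_{j-1}[f,\eta]$ (by applying the same coordinate-projection trick to the shifted sequence $f-S_{j-1}(f)$ and invoking Lemma~\ref{lemma:first}\ref{lemma:first:b}), and then lets $j\to\infty$.
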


\begin{proof} If $f=(a_j)_{j=1}^\infty$, then $|a_j|\le \Vert u_j\Vert$ for all $j\in\NN$. Applying this inequality to $f-S_{j-1}(f)$ and taking into account Lemma~\ref{lemma:first}~\ref{lemma:first:c},
\[
|a_j|\le \Vert u_j -u_{j-1,j} \Vert \le E_{j-1}[f,\eta],\quad j\ge 2.
\]
Letting $j$ tend to infinity finishes the proof.
\end{proof}

We say that a block basic sequence $(\xx_k)_{k=1}^\infty$ is \emph{generated by a seed} $(f,\eta)$ if there is an unbounded sequence $\nu=(n_k)_{k=1}^\infty$ in $\NN$ such that, with the usual notation $\uu[f,\eta]=(u_n)_{n=1}^\infty$, $\xx_k$ is a right-shift of $u_{n_k}$. By subsymmetry, all block basic sequences generated by the same seed who also share the unbounded sequence $\nu$ used in the above definition are isometrically equivalent. Let us construct, among them, a particular block basic sequence. Given $n\in\NN\cup\{0\}$ we define $\tau_n\in\OO$ by
\[
\tau_n(j)=j+n.
\]
Put $q_k=\sum_{i=1}^{k-1} n_i$ for all $k\in\NN$ (with the convention that $\sum_{i=1}^0 x_i=0$). Let
\[
\BB[f,\eta,\nu]=(V_{\tau_{q_k}}(u_{n_k}))_{k=1}^\infty.
\]
Note that every block sequence generated by a seed is in fact generated by a seed $((a_j)_{j=1}^\infty,\eta)$ such that $a_1\not=0$. We will refer to such seeds as \emph{proper}. Imposing the seed to be proper allows us to ensure that $u_n\not=0$ for every $n\in\NN$ so that $\BB[f,\eta,\nu]$ is a block basic sequence.

\begin{lemma}\label{lemma:second} Suppose that $|f|\le|g|$. Then $\BB[f,\eta,\nu]\lesssim_1 \BB[g,\eta,\nu]$.
\end{lemma}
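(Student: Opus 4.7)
The plan is to unpack the definitions of $\BB[f,\eta,\nu]$ and $\BB[g,\eta,\nu]$ and reduce the domination $\lesssim_1$ to a coordinatewise lattice inequality produced by $|f|\le|g|$; the $1$-unconditionality of the (assumed) subsymmetric $p$-norm then closes the argument. Writing $f=(a_j)_{j=1}^\infty$ and $g=(b_j)_{j=1}^\infty$ with $|a_j|\le|b_j|$, and letting $\pi[\eta]=(\pi_n)_{n=1}^\infty$, I would form $\uu[f,\eta]=(u_n)_{n=1}^\infty$ and $\uu[g,\eta]=(v_n)_{n=1}^\infty$. Since $|f|\le|g|$, Lemma~\ref{lemma:first}~\ref{lemma:first:c} yields $|u_n|\le|v_n|$ for every $n$ in the lattice induced by $(\ssb_j)_{j=1}^\infty$.

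Next I would observe that the shift $V_{\tau_{q_k}}$, being the linear extension of $\ssb_j\mapsto\ssb_{j+q_k}$, simply relabels basis vectors and therefore commutes with the coordinatewise modulus; this is immediate from \eqref{eq:Vphi}. Setting $\xx_k=V_{\tau_{q_k}}(u_{n_k})$ and $\yy_k=V_{\tau_{q_k}}(v_{n_k})$, it follows that $|\xx_k|\le|\yy_k|$ coordinatewise. By Lemma~\ref{lemma:first}~\ref{lemma:first:a}, both $\xx_k$ and $\yy_k$ are supported inside the interval $\{q_k+1,\dots,q_{k+1}\}$, so they have identical supports and the family $(\supp(\xx_k))_{k=1}^\infty$ is pairwise disjoint.

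The final step combines disjointness with monotonicity of the lattice quasi-norm. For any finitely supported sequence $(c_k)_{k=1}^\infty$ of scalars, disjointness of supports gives the coordinatewise identity
\[
\Bigl|\sum_k c_k\,\xx_k\Bigr|=\sum_k |c_k|\,|\xx_k|\le\sum_k |c_k|\,|\yy_k|=\Bigl|\sum_k c_k\,\yy_k\Bigr|,
\]
and then $1$-unconditionality of the subsymmetric $p$-norm (which is in force throughout this section by the standing assumption on $\SSB$) yields $\bigl\|\sum_k c_k\,\xx_k\bigr\|\le\bigl\|\sum_k c_k\,\yy_k\bigr\|$. This is precisely $\BB[f,\eta,\nu]\lesssim_1\BB[g,\eta,\nu]$. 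I do not expect any real obstacle here; the only point requiring a moment's attention is the coordinatewise identity above, which rests on the fact that the blocks $\xx_k$ (and $\yy_k$) have pairwise disjoint supports, and this in turn comes for free from the block structure recorded in Lemma~\ref{lemma:first}~\ref{lemma:first:a}.
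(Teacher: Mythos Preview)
Your argument is correct and is exactly the unpacking of the paper's one-line proof, which simply cites Lemma~\ref{lemma:first}~\ref{lemma:first:c}. (A minor slip: $\supp(\xx_k)$ need not equal $\supp(\yy_k)$, only be contained in it, but this is irrelevant since both lie in the pairwise disjoint intervals $\{q_k+1,\dots,q_{k+1}\}$ and that is all your coordinatewise identity actually uses.)
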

\begin{proof} It s straightforward from Lemma~\ref{lemma:first}~\ref{lemma:first:c}.
\end{proof}

Given $0<p<\infty$ we set $\overline p=\min\{1,p\}$.
\begin{lemma}\label{SubSymFromSeed}
Let $(f,\eta)$ be a proper seed and $\mu=(m_k)_{k=1}^\infty$ be an unbounded sequence of natural numbers. Suppose that
\[
\sum_{k=1}^\infty (E_{m_k}[f,\eta])^{\overline p}<\infty.
\]
\begin{enumerate}[label={(\alph*)}]
\item\label{SubSymFromSeed:a} If $\nu=(n_k)_{k=1}^\infty$ verifies $m_k\le n_k$ for all $k\in\NN$ then $\BB[f,\eta,\mu]\simeq\BB[f,\eta,\nu]$.
\item\label{SubSymFromSeed:b} $\BB[f,\eta,\mu]$ is a subsymmetric basic sequence.
\end{enumerate}
\end{lemma}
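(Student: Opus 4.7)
The plan is to prove (a) first and then derive (b) from it. Write $\xx_k := V_{\tau_{q_k}}(u_{n_k})$ for the $k$th term of $\BB[f,\eta,\nu]$ and $\yy_k := V_{\tau_{r_k}}(u_{m_k})$ for the $k$th term of $\BB[f,\eta,\mu]$, with $q_k=\sum_{i<k}n_i$ and $r_k=\sum_{i<k}m_i$.

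For part (a), the key idea is to insert an intermediate vector $\zz_k := V_{\tau_{q_k}}(u_{m_k,n_k})$, so that $\xx_k = \zz_k + \ww_k$ with $\ww_k := V_{\tau_{q_k}}(u_{n_k} - u_{m_k,n_k})$, and to handle the two equivalences $(\zz_k)_k \simeq (\yy_k)_k$ and $(\xx_k)_k \simeq (\zz_k)_k$ separately. The first is actually isometric: by Lemma~\ref{lemma:first}\ref{lemma:first:d}, $u_{m_k,n_k}$ is a right-shift of $u_{m_k}$, hence $\zz_k$ is itself a right-shift of $u_{m_k}$ supported in the disjoint consecutive interval $\{q_k+1,\dots,q_{k+1}\}$. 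Expanding $\sum_k a_k\zz_k$ and $\sum_k a_k\yy_k$ in $(\ssb_j)_j$, one checks that the nonzero scalars listed in order of increasing position coincide, so subsymmetry (isometric invariance of $V_\phi$ for $\phi\in\OO$) gives equality of norms. For the second equivalence, $\|\ww_k\|\le E_{m_k}[f,\eta]$ follows directly from the definition of $E_m$ and the isometric character of $V_{\tau_{q_k}}$, and $\overline p$-subadditivity together with block disjointness of the $\ww_k$'s give
\[
\left\|\sum_k a_k\ww_k\right\|^{\overline p} \le \sum_k |a_k|^{\overline p}\|\ww_k\|^{\overline p} \le \left(\sup_k |a_k|\right)^{\overline p} \sum_k (E_{m_k}[f,\eta])^{\overline p}.
\]
Properness of the seed and Lemma~\ref{lemma:first}\ref{lemma:first:e} supply $\|\xx_k\|,\|\zz_k\|\ge \|u_1\|=|a_1|>0$, and $1$-unconditionality of $(\ssb_j)_j$ then produces a uniform control $\sup_k|a_k|\le C_0\min(\|\sum_j a_j\xx_j\|,\|\sum_j a_j\zz_j\|)$. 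Combining this with the $\overline p$-triangle inequality $\|x\|^{\overline p}\le\|y\|^{\overline p}+\|x-y\|^{\overline p}$ closes the perturbation loop in both directions, with constants depending only on $|a_1|$ and $\sum_k (E_{m_k}[f,\eta])^{\overline p}$.

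For part (b), $(\yy_k)_k$ is a block basic sequence of a $1$-unconditional basis, hence itself unconditional. For subsymmetry, fix $\phi\in\OO$ and set $\mu':=(m_{\phi(k)})_k$; this sequence is unbounded, and $\sum_k (E_{m_{\phi(k)}}[f,\eta])^{\overline p}\le\sum_k(E_{m_k}[f,\eta])^{\overline p}<\infty$ since the former is a subsum of nonnegative terms indexed by $\phi(\NN)$. The plan is to use the common upper envelope $\nu:=(\max(m_k,m_{\phi(k)}))_k$, which is unbounded and termwise dominates both $\mu$ and $\mu'$; invoking part~(a) twice then yields
\[
\BB[f,\eta,\mu]\simeq\BB[f,\eta,\nu]\simeq\BB[f,\eta,\mu'].
\]
A final ``identical scalars in the same order'' identification, exactly as in the isometric step of (a), shows that $\BB[f,\eta,\mu']$ is isometrically equivalent to $(\yy_{\phi(k)})_k$, so $(\yy_{\phi(k)})_k\simeq(\yy_k)_k$ and the subsymmetry of $\BB[f,\eta,\mu]$ follows.

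The main obstacle is the perturbation step in (a): one must extract a uniform factor $\sup_k|a_k|$ from $\|\sum_k a_k\ww_k\|^{\overline p}$ using block-disjointness and the $\overline p$-subadditivity, then trade it back for a genuine norm comparison via the lower bound $\|\xx_k\|,\|\zz_k\|\ge|a_1|$ coming from properness of the seed. Once this bookkeeping is carried out, the two isometric identifications via subsymmetry and the reduction in (b) through a common upper envelope of $\mu$ and $\mu'$ follow directly from the definitions.
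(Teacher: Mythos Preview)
Your proof is correct and follows essentially the same strategy as the paper's: the intermediate sequence $\zz_k=V_{\tau_{q_k}}(u_{m_k,n_k})$, the isometric identification of $(\zz_k)_k$ with $\BB[f,\eta,\mu]$ via Lemma~\ref{lemma:first}\ref{lemma:first:d}, the tail estimate $\|\xx_k-\zz_k\|\le E_{m_k}[f,\eta]$, and for part~(b) the common upper envelope $\nu=(\max\{m_k,m_{\phi(k)}\})_k$ together with two applications of part~(a). The only substantive difference is that where the paper invokes the principle of small perturbations as a black box, you carry out the perturbation estimate by hand, exploiting $1$-unconditionality of the block sequences and the lower bound $\|\xx_k\|,\|\zz_k\|\ge|a_1|$ to trade $\sup_k|a_k|$ for the norm and close the $\overline p$-triangle inequality; this has the mild advantage of yielding the equivalence directly for any finite value of $\sum_k(E_{m_k}[f,\eta])^{\overline p}$, without first passing to a tail to make the sum small enough for the cited form of the principle.
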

\begin{proof}
\ref{SubSymFromSeed:a} Put
\[
\uu[f,\eta]=(u_n)_{n=1}^\infty,\; \uu_l[f,\eta]=(u_{m,n})_{n=1}^\infty,\; \BB[f,\eta,\mu]=(\yy_k)_{k=1}^\infty,
\]
and, for $k\in\NN$, $q_k=\sum_{i=1}^{k-1} n_i$. By Lemma~\ref{lemma:first}~\ref{lemma:first:d}, $\xx_k=V_{\tau_{q_k}}(u_{m_k,n_k})$ is a right-shift of $u_{m_k}$ for every $k\in\NN$. By Lemma~\ref{lemma:first}~\ref{lemma:first:a}, $\BB:=(\xx_k)_{k=1}^\infty$ is disjointly supported. Hence, $\BB$ is a basic sequence isometrically equivalent to $\BB[f,\eta,\mu]$. We have
\[
\Vert \yy_k-\xx_k\Vert=\Vert V_{q_k}(u_{n_k}-u_{m_k,n_k})\Vert =
\Vert u_{n_k}-u_{m_k,n_k}\Vert \le E_{m_k}[f,\eta].
\]
The principle of small perturbations (see, e.g., \cite{AK2016}*{Theorem 1.3.9}) yields that $\BB[f,\eta,\nu]\simeq\BB\simeq\BB[f,\eta,\mu]$.

\ref{SubSymFromSeed:b} If $\BB$ be a subbasis of $\BB[f,\eta,\mu]$ then $\BB$ is isometrically equivalent to $\BB[f,\eta,\tilde\mu]$, where $\tilde\mu=(\tilde m_k)_{k=1}^\infty$ is a subsequence of $\mu$. Let $\nu=(\max\{m_k,\tilde m_k\})_{k=1}^\infty$. Since
\[
\sum_{k=1}^\infty (E_{\tilde m_k}[f,\eta])^{\overline p}\le \sum_{k=1}^\infty (E_{m_k}[f,\eta])^{\overline p}<\infty,
\]
applying \ref{SubSymFromSeed:a} yields
$\BB[f,\eta,\mu]\simeq \BB[f,\eta,\nu] \simeq \BB[f,\eta,\tilde\mu]\simeq\BB$.
\end{proof}

\begin{proposition}\label{SubSymFromamenableSeed}Let $(f,\eta)$ be a seed with $E_\infty[f,\eta]=0$. Then
\begin{enumerate}[label={(\alph*)}]
\item\label{SubSymFromamenableSeed:a} There is an increasing sequence $\nu$ of natural numbers such that $\BB[f,\eta,\nu]$ is subsymmetric.

\item\label{SubSymFromamenableSeed:b} Assume that $\mu$ and $\nu$ are increasing sequences of natural numbers such that both $\BB[f,\eta,\mu]$ and $\BB[f,\eta,\nu]$ are subsymmetric. Then $\BB[f,\eta,\mu]\simeq \BB[f,\eta,\nu]$.
\end{enumerate}

\end{proposition}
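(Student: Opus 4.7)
The assumption $E_\infty[f,\eta]=0$ says that $E_m[f,\eta]\to 0$ as $m\to\infty$. I would inductively pick a strictly increasing sequence $\nu=(n_k)_{k=1}^\infty$ of natural numbers with $E_{n_k}[f,\eta]^{\overline p}\le 2^{-k}$ for every $k$, which gives $\sum_k E_{n_k}[f,\eta]^{\overline p}<\infty$. Assuming (as is implicit in $\BB[f,\eta,\nu]$ being an honest block basic sequence) that the seed is proper, Lemma~\ref{SubSymFromSeed}\ref{SubSymFromSeed:b} applied with $\mu=\nu$ immediately yields that $\BB[f,\eta,\nu]$ is subsymmetric.

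\textbf{Part (b).} My plan is to locate sparse common subsequences of $\mu=(m_k)$ and $\nu=(n_k)$ to which Lemma~\ref{SubSymFromSeed}\ref{SubSymFromSeed:a} directly applies, and then lift the resulting equivalence back to $\mu$ and $\nu$ using the assumed subsymmetry of $\BB[f,\eta,\mu]$ and $\BB[f,\eta,\nu]$. Exploiting $E_m[f,\eta]\to 0$ together with the unboundedness of $\mu$ and $\nu$, I choose strictly increasing index sequences $(j_k)$ and $(l_k)$ with
\[
\max\bigl(E_{m_{j_k}}[f,\eta]^{\overline p},\,E_{n_{l_k}}[f,\eta]^{\overline p}\bigr)\le 2^{-k}\qquad\text{for every }k.
\]
By a pigeonhole step one of the sets $\{k\colon m_{j_k}\le n_{l_k}\}$ or $\{k\colon m_{j_k}\ge n_{l_k}\}$ is infinite, so after passing to a further subsequence and, if necessary, interchanging the roles of $\mu$ and $\nu$ (which is harmless by symmetry of the claim), I may assume $m_{j_k}\le n_{l_k}$ for all $k$. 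Writing $\tilde\mu=(m_{j_k})_k$ and $\tilde\nu=(n_{l_k})_k$, Lemma~\ref{SubSymFromSeed}\ref{SubSymFromSeed:a} applied to the seed $(f,\eta)$ delivers $\BB[f,\eta,\tilde\mu]\simeq\BB[f,\eta,\tilde\nu]$.

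To close the loop, I note that the subsequence of $\BB[f,\eta,\mu]$ obtained by keeping only the indices $(j_k)$ is isometrically equivalent to $\BB[f,\eta,\tilde\mu]$: both are block basic sequences whose $k$-th term is a right-shift of the same vector $u_{m_{j_k}}$, and $1$-subsymmetry of the basis $(\ssb_j)_{j=1}^\infty$ of $\SSB$ (concretely, that every $V_\phi$ with $\phi\in\OO$ is an isometric embedding) implies that the quasi-norm of a block vector built from right-shifts of a fixed list of vectors is independent of where the blocks are placed. Since $\BB[f,\eta,\mu]$ is subsymmetric it is equivalent to each of its subsequences, so $\BB[f,\eta,\mu]\simeq\BB[f,\eta,\tilde\mu]$; the symmetric argument gives $\BB[f,\eta,\nu]\simeq\BB[f,\eta,\tilde\nu]$, and chaining the three equivalences yields $\BB[f,\eta,\mu]\simeq\BB[f,\eta,\nu]$.

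The main technical point will be the simultaneous management of three independent constraints on the extracted subsequences $\tilde\mu$, $\tilde\nu$: strict monotonicity of the index sequences, the order condition $m_{j_k}\le n_{l_k}$, and the summability of both $E_{m_{j_k}}[f,\eta]^{\overline p}$ and $E_{n_{l_k}}[f,\eta]^{\overline p}$. All three are absorbed by the monotone decay $E_m\to 0$ combined with a single pigeonhole/swap, after which Lemma~\ref{SubSymFromSeed} and the assumed subsymmetry do the remaining bookkeeping.
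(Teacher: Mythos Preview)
Your proof is correct and follows the same strategy as the paper: extract subsequences of $\mu$ and $\nu$, apply Lemma~\ref{SubSymFromSeed}\ref{SubSymFromSeed:a}, and then use the assumed subsymmetry (together with the observation that a subbasis of $\BB[f,\eta,\mu]$ is isometrically equivalent to $\BB[f,\eta,\tilde\mu]$ for the corresponding subsequence $\tilde\mu$) to lift the equivalence back. The paper's execution of part~(b) is a touch more economical: since Lemma~\ref{SubSymFromSeed}\ref{SubSymFromSeed:a} only requires the summability hypothesis on the \emph{smaller} sequence, the paper first picks $\tilde\mu$ as a subsequence of $\mu$ with $\sum_k E_{\tilde m_k}[f,\eta]^{\overline p}<\infty$ and then, using only that $\nu$ is increasing and unbounded, chooses a subsequence $\tilde\nu$ of $\nu$ whose $k$th term dominates the $k$th term of $\tilde\mu$, so your second summability constraint and the pigeonhole/swap step are not needed.
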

\begin{proof} Our hypothesis yields the existence of an unbounded sequence $\tilde\mu=(m_k)_{k=1}^\infty$ such that
$
\sum_{k=1}^\infty (E_{m_k}[f,\eta])^{\overline p}<\infty.
$
Moreover, $\tilde\mu$ could be chosen to be a subsequence of a given unbounded sequence. By Lemma~\ref{SubSymFromSeed}~\ref{SubSymFromSeed:b}, $\BB[f,\eta,\tilde\mu]$ is subsymmetric and so \ref{SubSymFromamenableSeed:a} holds.

In order to prove \ref{SubSymFromamenableSeed:b} we pick $\tilde\mu$ as above, which is a subsequence of $\mu$. Then, we pick a subsequence $\tilde\nu=(n_k)_{k=1}^\infty$ of $\nu$ with $m_k\le n_k$ for all $k$. Invoking Lemma~\ref{SubSymFromSeed}~\ref{SubSymFromSeed:a} we obtain
\[
\BB[f,\eta,\nu]\simeq \BB[f,\eta,\tilde\nu]\simeq \BB[f,\eta,\tilde\mu]\simeq \BB[f,\eta,\mu]. \qedhere
\]
\end{proof}

Proposition~\ref{SubSymFromamenableSeed} allows us to assign to any seed $(f,\eta)$ with $E_\infty[f,\eta]=0$ an equivalence class of subsymmetric basic sequences in $\SSB$. If $\BB$ is an element of that equivalence class we say that $\BB$ is a \emph{subsymmetric basic sequence generated by} $(f,\eta)$. We make a stop en route for proving that the subsymmetric basis $(\ssb_j)_{j=1}^\infty$ of $\SSB$ can be recovered using this procedure.
\begin{proposition}\label{Prop:EventuallyNull}
Let $f$ be an eventually null sequence whose first term is not null. Then $E_\infty[f,\eta]=0$ for any positioning $\eta$. Moreover any basic sequence generated by $(f,\eta)$ is equivalent to $(\ssb_j)_{j=1}^\infty$.
\end{proposition}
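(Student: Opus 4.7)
The plan has two parts, mirroring the statement.

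For the first claim, I would fix $N \in \NN$ such that $f = (a_j)_{j=1}^\infty$ satisfies $a_j = 0$ for every $j > N$. Writing $\uu[f,\eta] = (u_n)_{n=1}^\infty$ and $\uu_l[f,\eta] = (u_{m,n})_{m \le n}$, the key observation is that by Lemma~\ref{lemma:first}~\ref{lemma:first:b} the vector $u_n - u_{m,n}$ coincides with the $n$th term of $\uu[f - S_m(f),\eta]$. Whenever $m \ge N$ we have $f - S_m(f) = 0$, so $u_n - u_{m,n} = 0$ for all $n \ge m$, giving $E_m[f,\eta] = 0$. Therefore $E_\infty[f,\eta] = \lim_m E_m[f,\eta] = 0$.

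For the second claim, by Proposition~\ref{SubSymFromamenableSeed} there exists an increasing sequence $\nu = (n_k)_{k=1}^\infty$ of natural numbers such that the basic sequence $\BB[f,\eta,\nu]$ is subsymmetric, and any basic sequence generated by $(f,\eta)$ is equivalent to it. By definition,
\[
\BB[f,\eta,\nu] = (V_{\tau_{q_k}}(u_{n_k}))_{k=1}^\infty,
\]
where $q_k = \sum_{i=1}^{k-1} n_i$. Since $u_{n_k} = \sum_{j=1}^{n_k} a_j \, \ssb_{\pi_{n_k}(j)}$ and $a_j = 0$ for $j > N$, we have $|\supp(u_{n_k})| \le N$ for every $k$; the shift operators $V_{\tau_{q_k}}$ preserve support size, so the block basic sequence $\BB[f,\eta,\nu]$ has support uniformly bounded by $N$.

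It remains to show $\BB[f,\eta,\nu]$ is semi-normalized, after which Lemma~\ref{lem:boundedsupport} will yield the desired equivalence with $(\ssb_j)_{j=1}^\infty$. Since $V_{\tau_{q_k}}$ is an isometric embedding, $\Vert V_{\tau_{q_k}}(u_{n_k}) \Vert = \Vert u_{n_k}\Vert$. From Lemma~\ref{lemma:first}~\ref{lemma:first:e} the sequence $(\Vert u_n\Vert)_{n=1}^\infty$ is non-decreasing, hence bounded above by $E_0[f,\eta] = \Vert u_N\Vert < \infty$ and bounded below by $\Vert u_1\Vert = |a_1| \, \Vert \ssb_1\Vert > 0$, since $a_1 \neq 0$ by hypothesis (recall that a seed is proper, and the hypothesis that the first term of $f$ is not null exactly captures properness). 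Lemma~\ref{lem:boundedsupport} then applies and completes the proof. I do not anticipate any serious obstacle: the argument is essentially a direct unwinding of the definitions, with the properness assumption providing the lower bound needed for semi-normalization.
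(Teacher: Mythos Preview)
Your argument is essentially the paper's: show $E_m[f,\eta]=0$ for $m\ge N$, then invoke Lemma~\ref{lem:boundedsupport}. One small overreach: you attribute to Proposition~\ref{SubSymFromamenableSeed} the claim that \emph{every} basic sequence generated by $(f,\eta)$ is equivalent to the subsymmetric one you extract, but Proposition~\ref{SubSymFromamenableSeed}~\ref{SubSymFromamenableSeed:b} only compares subsymmetric representatives coming from increasing sequences. The fix is to drop that detour entirely: your bounded-support and semi-normalization estimates ($|\supp(u_{n_k})|\le N$ and $|a_1|\,\Vert\ssb_1\Vert\le\Vert u_{n_k}\Vert\le\Vert u_N\Vert$) hold for \emph{any} unbounded $\nu$, so Lemma~\ref{lem:boundedsupport} applies directly to every $\BB[f,\eta,\nu]$, which is exactly what the paper's one-line proof intends.
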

\begin{proof} We have $E_m[f,\eta]=0$ for $m$ large enough. Now the result follows from Lemma~\ref{lem:boundedsupport}.
\end{proof}

\subsection{Basic sequences generated by a vector}\label{sect:22}Let us consider the trivial positioning $\eta_0=(n)_{n=1}^\infty$. Given a sequence $f\in\FF^\NN$ we put
\[
\uu_l[f]=\uu_l[f,\eta_0], \uu[f]=\uu[f,\eta_0], E_m[f]=E_m[f,\eta_0]\; \text{for}\; m\in\NN\cup\{0,\infty\}.
\]
A basic sequence generated by the seed $(f,\eta_0)$ is said to be generated by $f$. A basic sequence will be said to be generated by a vector $x\in\SSB$ if it is generated by the coefficient transform $\Fou(x)$ of $x$.

\begin{proposition}\label{prop:2}Let $f$ be a sequence in $\FF^\NN$. Then $E_\infty[f,\eta_0]=0$ if and only if there is $x\in\SSB$ such that $\Fou(x)=f$. Moreover, if the subsymmetric basis $(\ssb_j)_{j=1}^\infty$ of $\SSB$ is boundedly complete and $E_0(f)<\infty$ then $E_\infty[f,\eta]=0$.
\end{proposition}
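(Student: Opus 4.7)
The plan is as follows. For the trivial positioning $\eta_0 = (n)_{n=1}^\infty$, the recursive definition of $\pi[\eta_0]$ produces the identity on each $\NN[n]$, so the quantities $u_n := u_n[f,\eta_0]$ collapse to the ordinary partial sums $\sum_{j=1}^n a_j \, \ssb_j$ of the formal series $\sum_j a_j\, \ssb_j$, and $u_{m,n}[f,\eta_0] = u_m$ for every $m \le n$. Consequently $E_m[f,\eta_0] = \sup_{n \ge m}\Vert u_n - u_m\Vert$, and the condition $E_\infty[f,\eta_0] = 0$ says exactly that $(u_n)_{n=1}^\infty$ is a Cauchy sequence in $\SSB$. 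Once this identification is in hand, the rest of the proof is essentially definition-chasing.

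For the equivalence I would argue as follows. If $E_\infty[f,\eta_0] = 0$, then $(u_n)_{n=1}^\infty$ is Cauchy; completeness of $\SSB$ supplies $x := \lim_n u_n$, and continuity of the coordinate functionals forces $\ssb_j^*(x) = a_j$ for every $j$, i.e., $\Fou(x) = f$. Conversely, if $\Fou(x) = f$ for some $x \in \SSB$, then by the defining property of a basis we have $x = \sum_j a_j\, \ssb_j$ with the partial sums $u_n$ converging to $x$ in $\SSB$, whence $\Vert u_n - u_m\Vert \to 0$ as $m, n \to \infty$, giving $E_\infty[f,\eta_0] = 0$.

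For the \emph{moreover} statement, the hypothesis $E_0[f] < \infty$ translates via the identification above into $\sup_n \Vert u_n\Vert < \infty$, i.e., the partial sums of $\sum_j a_j\, \ssb_j$ are uniformly bounded. Bounded completeness of $(\ssb_j)_{j=1}^\infty$ is precisely the condition that allows us to pass from bounded partial sums to a convergent series, so $\sum_j a_j\, \ssb_j$ converges to some $x \in \SSB$ with $\Fou(x) = f$, and the equivalence just established yields $E_\infty[f,\eta_0] = 0$.

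There is no real obstacle here: everything reduces to the observation that for $\eta_0$ the objects $u_n$ and $u_{m,n}$ are nothing but the ordinary partial sums of $\sum_j a_j\, \ssb_j$, after which completeness of $\SSB$ covers the first assertion and the defining property of a boundedly complete basis covers the second. The only point worth flagging is to keep in mind that the last $\eta$ in the statement should be read as $\eta_0$, since the claim cannot be expected to hold for an arbitrary positioning of a merely subsymmetric (non-symmetric) basis.
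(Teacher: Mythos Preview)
Your proposal is correct and follows essentially the same approach as the paper's own proof: both reduce to the observation that for the trivial positioning one has $u_n=\sum_{j=1}^n a_j\,\ssb_j$ and $u_{m,n}=u_m$, so that $E_\infty[f,\eta_0]=0$ is precisely the Cauchy condition for the partial sums, and the boundedly complete case follows immediately from the definition. Your remark that the final $\eta$ should be read as $\eta_0$ is also apt.
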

\begin{proof}Put $f=(a_j)_{j=1}^\infty$, $\uu[f]=(u_n)_{n=1}^\infty$ and $\uu_l[f]=(u_{m,n})_{m\le n}$. We have $u_n=\sum_{j=1}^n a_j\, \ssb_j$ for every $n\in\NN$ and $u_{m,n}=u_m$ for every $m$, $n\in\NN$ with $m\le n$. Hence,
\[
E_m[f]=\sup_{n\ge m+1} \left\Vert \sum_{j=m+1}^n a_j \, \ssb_j\right\Vert, \quad m\in\NN\cup\{0\}.
\]
Therefore $E_\infty[f,\eta]=0$ if and only if $\sum_{j=1}^\infty a_j \, \ssb_j$ is a Cauchy series. In turn, $\sum_{j=1}^\infty a_j \, \ssb_j$ is a Cauchy series if and only if there $x\in \SSB$ such that $x=\sum_{j=1}^\infty a_j \, \ssb_j$.

In the case when $(\ssb_j)_{j=1}^\infty$ is boundedly complete and $E_0(f)<\infty$, there is $x\in\SSB$ such that $x=\sum_{j=1}^\infty a_j \, \ssb_j$.
\end{proof}

\subsection{The dichotomy theorem}
We are ready to see the main result of this paper. In order to properly enunciate it, it will be convenient to state some additional notation. We say that a basic sequence $(\xx_k)_{k=1}^\infty$ is
\emph{uniformly null} if
\[
\lim_k \sup_j |\ssb_j^*(\xx_k)|=0.
\]
A seed $(f,\eta)$ is said to be non-negative and non-increasing if $f$ is.

\begin{theorem}\label{theorem:trichotomy:subsym}Let $\BB$ be a subsymmetric basic sequence in $\SSB$. Then:
\begin{enumerate}[label={(\alph*)}]
\item\label{theorem:trichotomy:subsym:a} Either $\BB$ is equivalent to a subsymmetric basic sequence generated by a non-negative and non-increasing seed $(f,\eta)$ with $E_\infty[f,\eta]=0$, or

\item\label{theorem:trichotomy:subsym:b} $\BB$ is equivalent to a block basic sequence and dominates a semi-normalized uniformly null block basic sequence.
\end{enumerate}
\end{theorem}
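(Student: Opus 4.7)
The plan is to extract a candidate seed from the greedy profile of the blocks of $\BB$ and dichotomise according to the quantity $L:=\inf_m\liminf_k\|\yy_k-\GG_m(\yy_k)\|$.

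By Proposition~\ref{prop:1} and $1$-unconditionality I may assume $\BB=(\yy_k)_{k=1}^\infty$ is a block basic sequence with non-negative coefficients. Write the greedy reordering $\yy_k=\sum_{j=1}^{N_k}a_{k,j}\,\ssb_{\rho_k(j)}$ with $a_{k,1}\ge\cdots\ge a_{k,N_k}>0$, and let $\pi_{k,n}\in\Pi_n$ denote the order type of $\rho_k(1),\dots,\rho_k(n)$. A Cantor diagonal extraction yields a subsequence along which $a_{k,j}\to c_j$ for every $j$ and $\pi_{k,n}$ stabilises at some $\pi_n$ for every $n$. The compatible family $(\pi_n)$ defines a positioning $\eta=(d_n)$ via~\eqref{eq:etafrompi}, and $f:=(c_j)_{j=1}^\infty$ is non-negative and non-increasing.

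If $c_1=0$ then $\BB$ is itself semi-normalized and uniformly null, settling (b). If $c_\infty:=\lim c_j>0$, then for each $m$ and $k$ large $\yy_k$ has $m$ coefficients $\ge c_\infty/2$; applying the $1$-unconditional coordinate projection, lattice monotonicity, and subsymmetry gives $\|\yy_k\|\ge(c_\infty/2)\|\sum_{j=1}^m\ssb_j\|$, so $\|\sum_{j=1}^m\ssb_j\|$ is bounded, making $(\ssb_j)$ equivalent to the unit vector basis of $c_0$. Since the latter admits the unit vector basis as its unique subsymmetric basic sequence, $\BB\simeq(\ssb_j)$, and (a) holds via the eventually null seed by Proposition~\ref{Prop:EventuallyNull}. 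Henceforth I assume $c_1>0$ and $c_j\to 0$; moreover if $N_k$ is bounded then $\BB$ has bounded support and Lemma~\ref{lem:boundedsupport} again yields $\BB\simeq(\ssb_j)$, so I also assume $N_k\to\infty$.

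From $\|u_n-u_{m,n}\|=\lim_k\|\sum_{j=m+1}^n a_{k,j}\,\ssb_{\pi_n(j)}\|\le\liminf_k\|\yy_k-\GG_m(\yy_k)\|=:L_m$, which follows by right-shift isometry and $1$-unconditional coordinate projection, I obtain $E_m[f,\eta]\le L_m$. Suppose first $L:=\inf_m L_m>0$. For each $m$ infinitely many $k$ then satisfy $\|\yy_k-\GG_m(\yy_k)\|\ge L/2$, and since $c_j\to 0$ I diagonally pick $(k_i)$ and $m_i\to\infty$ with $\|\yy_{k_i}-\GG_{m_i}(\yy_{k_i})\|\ge L/2$ and $a_{k_i,m_i+1}\to 0$. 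The tails $\zz_i:=\yy_{k_i}-\GG_{m_i}(\yy_{k_i})$ form a semi-normalized uniformly null block basic sequence, dominated by $\BB$ via the $1$-unconditional coordinate projection onto $\bigcup_i\supp(\zz_i)$, establishing (b).

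Suppose finally $L=0$. Since then $L_m,E_m\to 0$, I extract $(k_i)$ and $m_i\le N_{k_i}$, $m_i\to\infty$, with $\pi_{k_i,n}=\pi_n$ for $n\le N_{k_i}$ and with each of $\sum_{j\le m_i}|a_{k_i,j}-c_j|^p$, $\|\yy_{k_i}-\GG_{m_i}(\yy_{k_i})\|^p$, and $E_{m_i}^p$ bounded by $2^{-i}$. Set $\widetilde\yy_i=\sum_{j=1}^{N_{k_i}}a_{k_i,j}\,\ssb_{\pi_{N_{k_i}}(j)}$ and $Q_i=\sum_{\ell<i}N_{k_\ell}$; the matching of coefficients and order types combined with $1$-subsymmetry (via the increasing bijection between the two support sets) gives the isometric identification $(\yy_{k_i})_i\simeq(V_{\tau_{Q_i}}(\widetilde\yy_i))_i$. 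Splitting $\widetilde\yy_i-u_{N_{k_i}}$ at index $m_i$ and using $p$-subadditivity together with the right-shift identification of the tail part with $\yy_{k_i}-\GG_{m_i}(\yy_{k_i})$ yields $\|\widetilde\yy_i-u_{N_{k_i}}\|^p\le\sum_{j\le m_i}|a_{k_i,j}-c_j|^p+\|\yy_{k_i}-\GG_{m_i}(\yy_{k_i})\|^p+E_{m_i}^p\le 3\cdot 2^{-i}$. The small perturbation principle and Lemma~\ref{SubSymFromSeed} then identify $\BB$ with the subsymmetric basic sequence $\BB[f,\eta,(N_{k_i})]$, establishing (a). The hardest step is arranging all three summability conditions simultaneously in the $L=0$ case, which is possible because $L=0$ ensures $L_{m_i}\to 0$, letting us pick $k_i$ with $\|\yy_{k_i}-\GG_{m_i}(\yy_{k_i})\|$ arbitrarily small for each prescribed $m_i$.
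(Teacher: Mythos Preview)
Your overall strategy matches the paper's: reduce to a non-negative block basic sequence, extract a seed $(f,\eta)$ by Cantor diagonalization on the greedy coefficients and order types, dichotomise on whether the greedy tails $\|\yy_k-\GG_m(\yy_k)\|$ stay uniformly bounded away from zero, and in the case they do not, use small perturbations to identify $\BB$ with a sequence generated by the seed. The treatment of the $L>0$ case and of the degenerate cases ($c_1=0$, $c_\infty>0$, bounded support) is fine. However, there is a genuine gap in your $L=0$ argument.

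You impose the condition ``$\pi_{k_i,n}=\pi_n$ for all $n\le N_{k_i}$'' and use it twice: once to obtain the isometric identification $(\yy_{k_i})_i\simeq(V_{\tau_{Q_i}}(\widetilde\yy_i))_i$, and once to identify the tail $\sum_{j>m_i}a_{k_i,j}\,\ssb_{\pi_{N_{k_i}}(j)}$ with a right-shift of $\yy_{k_i}-\GG_{m_i}(\yy_{k_i})$. This condition is not achievable in general. After the Cantor diagonal (and relabeling), you only know $\pi_{k,n}=\pi_n$ for $k\ge n$; getting it up to level $N_{k_i}$ would force $k_i\ge N_{k_i}$, which is impossible whenever the support sizes grow faster than the index along the diagonal subsequence (e.g.\ if the original blocks have $|\supp(\yy_k)|=2^k$). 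No further extraction repairs this, because the order types of distinct $\yy_k$'s need not be compatible beyond a prescribed finite level.

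The fix is to avoid the full-length intermediate $\widetilde\yy_i$ altogether and truncate at a level you can control. Choose $m_i$ first so that $L_{m_i}$ and $E_{m_i}$ are small, and then choose $k_i\ge m_i$ large enough that $\pi_{k_i,m_i}=\pi_{m_i}$, that $\sum_{j\le m_i}|a_{k_i,j}-c_j|^p$ is small, and that $\|\yy_{k_i}-\GG_{m_i}(\yy_{k_i})\|$ is close to $L_{m_i}$. Then $\GG_{m_i}(\yy_{k_i})$ is a right-shift of $v_i:=\sum_{j\le m_i}a_{k_i,j}\,\ssb_{\pi_{m_i}(j)}$, and $\|v_i-u_{m_i}\|$ is small; two applications of the small-perturbation principle give $(\yy_{k_i})_i\simeq\BB[f,\eta,(m_i)_i]$, after which Lemma~\ref{SubSymFromSeed} finishes. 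This is precisely the route the paper takes.
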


%The proof of this theorem borrows the language of greedy approximation theory with respect to bases. Given a Banach space $\SSB$ with a basis $(\ssb_j)_{j=1}^\infty$, we define the \emph{greedy ordering} of a vector $x\in \SSB$ as the unique one-to-one map $\rho\colon\NN\to\NN$ satisfying
%\begin{itemize}
%\item $(|\ssb_{\rho(j)}^*(x)|)_{j=1}^\infty$ is non-increasing,
%\item If $|\ssb_{\rho(j)}^*(x)|=|\ssb_{\rho(k)}^*(x)|$, then $j\le k$, and
%\item If $\supp(x)$ is finite then $\rho(\NN)=\NN$.
%\end{itemize}
%{\JLA Note that if $\supp(x)$ in infinite, then $\rho(\NN)=\supp(x)$.}
%{\JLA If $\rho$ is the greedy ordering of $x$ $\supp(x)$ is infinite, then $\rho(\NN)=\supp(x)$. In any case, given $r\in\NN$, we define the \emph{$r$th greedy set} of $x$ by}
%If $\rho$ is the greedy ordering of $x$, and $r\in\NN$, we define the \emph{$n$th greedy set} of $x$ by
%\[
%A_n(x)=\{\rho(j) \colon 1\le j \le n\}
%\]
%and the \emph{$n$th greedy sum} of $x=\sum_{j=1}^\infty a_j \, \ssb_j$ by
%\[
%\GG_n(x)=\sum_{j=1}^n a_{\rho(j)} \, \ssb_{\rho(j)}=\sum_{j\in A_n(x)} a_j\, \ssb_j.
%\]
We will use the conventions that $A_0(x)=\emptyset$ and $\GG_0(x)=0$.

The \emph{fundamental function} $(\Phi(n))_{n=1}^\infty$ of a basis $(\ssb_j)_{j=1}^\infty$ is the sequence defined by
\[
\Phi(n)=\sup_{|A|\le n}\left\Vert \sum_{j\in A} \ssb_j\right\Vert.
\]
Note that if $(\ssb_j)_{j=1}^\infty$ is $1$-subsymmetric then $\Phi(|A|)=\Vert \sum_{j\in A} \ssb_j\Vert$ for every finite set $A\subseteq\NN$. The fundamental function of a basis is non-decreasing and, unless $(\ssb_j)_{j=1}^\infty$ is equivalent to the canonical basis of $c_0$, we have
\[
\lim_n \Phi(n)=\infty.
\]

If $A\subseteq\NN$, $j\in A$ and $|\{k\in A\colon k\le j\}|=d$, we say that $j$ is the $d$th element of $A$.

\begin{proof}[Proof of Theorem~\ref{theorem:trichotomy:subsym}]
By Propostion~\ref{prop:1} we can assume that $\BB=(\xx_k)_{k=1}^\infty$ is a block basic sequence with respect to the $1$-subsymmetric basis $(\ssb_j)_{j=1}^\infty$ of $\SSB$. Then, by unconditionality, we can assume that
\[
\ssb_j^*(\xx_n)\ge 0
\]
for every $j$, $n\in\NN$.

If $(\ssb_j)_{j=1}^\infty$ were equivalent to the canonical basis of $c_0$ so
would $\BB$ be. Thus we can assume that $\lim_n \Phi(n)=\infty$.

If there were $\delta>0$ such that
\[
\liminf_k \left\Vert \xx_k -\GG_n(\xx_k)\right\Vert> \delta,\quad n\in \mathbb N,
\]
then, replacing $\BB$ with a suitable subbasis, there would be an increasing sequence $(n_k)_{k=1}^\infty$ of non-negative integers such
that, if we put $\yy_k= \xx_k-\GG_{n_k}(\xx_k)$ for all $k\in\NN$,
\[
\inf_k \Vert \yy_k \Vert \ge\delta.
\]
Since the basis $(\ssb_j)_{j=1}^\infty$ is $1$-unconditional, the block basic sequence $\BB_1=(\yy_k)_{k=1}^\infty$ would be $1$-dominated by $\BB$. In particular, $\BB_1$ would be semi-normalized. Appealing to unconditionality again, for every $k\in\NN$ we would have
\begin{align*}
\sup_j| \ssb_j^*(\yy_k)|
&\le \min \{ |\ssb_j^*(\xx_k)| \colon j\in A_{n_k}(\xx_k) \} \\
&= \min \{ |\ssb_j^*(\xx_k)| \colon j\in A_{n_k}(\xx_k) \} \frac{ \left\Vert \sum_{j\in A_{n_k}(\xx_k)}\ssb_j\right\Vert}{\Phi(n_k)}\\
& \le \frac{\Vert \xx_k \Vert}{\Phi(n_k)}.
\end{align*}
Therefore, $\BB_1$ would be uniformly null. That is, we would be in the case \ref{theorem:trichotomy:subsym:b}. Hence, from now on we will assume that
\newlist{pes}{enumerate}{1}
\begin{altenumerate}
\item\label{dicho:p1} for every $\delta>0$ there are natural numbers $i=i(\delta)$ and $n=n(\delta)$ such that
$
\Vert \xx_k-\GG_{n}(\xx_k)\Vert\le \delta
$
for every $k\ge i$.
\end{altenumerate}
Notice that property \ref{dicho:p1} is preserved when passing to a subsequence.

For any $k\in\NN$, let $\rho_k$ be the greedy ordering of $\xx_k$. Set
\[
a_{k,n}=\ssb_{\rho_k(n)}^*(\xx_k), \quad k,n\in\NN.
\]
Note that
\begin{enumerate}[label=\textbf{(q\arabic*)}]
\item\label{dicho:q1} $(a_{k,n})_{n=1}^\infty$ is non-negative and non-increasing and
\item\label{dicho:q2} $\sup_{k} a_{k,1}<\infty$.
\end{enumerate}
If $\liminf_k a_{k,1}=0$, then, passing to a subsequence if necessary, $(\xx_k)_{k=1}^\infty$ would be uniformly null. Hence, we asume that
\begin{enumerate}[label=\textbf{(q\arabic*)}, resume]
\item\label{dicho:q3} $\inf_k a_{k,1}>0$.
\end{enumerate}
For each $k$ and $n\in\NN$, let $d_{k,n}$ be the position of $\rho_k(n)$ in $A_n(\xx_k)$. Next, we recursively construct
\begin{itemize}
\item a sequence $f=(a_n)_{n=1}^\infty\in\FF^\NN$,
\item a positioning $\eta=(d_n)_{n=1}^\infty$, and
\item a sequence $(\phi_n)_{n=1}^\infty$ in $\OO$ such that $\phi_m$ is a subsequence of $\phi_n$ whenever $m\le n$.
\end{itemize}
Let $n\in\NN$ and assume that $a_m$, $d_m$ and $\phi_m$ have been constructed for $m\le n-1$ (nothing is constructed if $n=1$). Taking into account \ref{dicho:q2}, by the Bolzano-Weierstrass theorem there is $\phi_n\in\OO$ (which can be chosen to be a subsequence of $\phi_{n-1}$), $a_n\in\FF$ and $d_n\in\NN[n]$ such that
\begin{enumerate}[label=\textbf{(q\arabic*)}, resume]
\item\label{dicho:q4} $d_{\phi_n(k),n}=d_n$ for all $k\in\NN$, and
\item\label{dicho:q5} $\lim_k a_{\phi_n(k),n}=a_n$.
\end{enumerate}

Combining \ref{dicho:q1}, \ref{dicho:q3} and \ref{dicho:q5} we obtain that
\begin{altenumerate}[resume]
\item\label{dicho:p2} $a_1>0$ and $(a_n)_{n=1}^\infty$ a non-increasing sequence of non-negative scalars.
\end{altenumerate}
Let $\pi[\eta]=(\pi_n)_{n=1}^\infty$. We infer from \ref{dicho:q4} that
\begin{enumerate}[label=\textbf{(q\arabic*)}, resume]
\item\label{dicho:q6} $\GG_n(\xx_k)-\GG_m(\xx_k)$ is a right-shift of $\sum_{j=m+1}^n a_{k,j} \, \ssb_{\pi_n(j)}$ whenever $0\le m\le n$ and $k\in\rang(\phi_n)$.
\end{enumerate}

Next we appeal to the classical Cantor's diagonal argument, i.e., we consider $\phi\in\OO$ defined by $\phi(n)=\phi_n(n)$ for all $n$. Then, replacing the block basic sequence $(\xx_k)_{k=1}^\infty$ with the equivalent block basic sequence $(\xx_{\phi(k)})_{n=1}^\infty$, we have
\begin{altenumerate}[resume]
\item\label{dicho:p3} $\lim_k a_{k,n}=a_n$ for all $n\in\NN$.
\item\label{dicho:p4} $\GG_n(\xx_k)-\GG_m(\xx_k)$ is a right-shift of $\sum_{j=m+1}^n a_{k,j} \, \ssb_{\pi_n(j)}$ whenever $0\le m\le n \le k$.
\end{altenumerate}

With the properties~\ref{dicho:p1}, \ref{dicho:p2}, \ref{dicho:p3} and \ref{dicho:p4} in hand, we are now in a position to complete the proof. Let us first see that $E_\infty[f,\eta]=0$. Let $\delta>0$ and pick $i$ and $m$ as in \ref{dicho:p1}. If $n\ge m$ and $k\ge \max\{i,n\}$, using \ref{dicho:p4} and unconditionality gives
\[
\left\Vert \sum_{j=m+1}^n a_{k,j} \, \ssb_{\pi_n(j)} \right\Vert
=\Vert \GG_{n}(\xx_k)-\GG_{m}(\xx_k)\Vert\le \Vert \xx_k-\GG_{m}(\xx_k)\Vert\le \delta.
\]
Let $\uu[f,\eta]=(u_n)_{n=1}^\infty$ and $\uu_l[f,\eta]=(u_{m,n})_{m\le n}$. Letting $k$ tend to infinity and appealing to \ref{dicho:p3} we obtain
\[
\Vert u_n-u_{m,n}\Vert = \left\Vert \sum_{j=m+1}^n a_{j} \, \ssb_{\pi_n(j)} \right\Vert\le\delta.
\]
Letting now $n$ tend to infinity we get $E_{m}[f,\eta]\le\delta$. Combining with \ref{dicho:p2}, we deduce that $(f,\eta)$ is a non-negative and non-decreasing seed with $E_\infty[f,\eta]=0$.

Finally, let us prove that $\BB$ is equivalent to a basic sequence generated by the seed $(f,\eta)$. Let $n(\cdot)$ and $i(\cdot)$ be defined by \ref{dicho:p1}. Given $k\in\NN$, set $n_k=n(2^{-k})$. Then, use property \ref{dicho:p2} to recursively construct an increasing sequence $(i_k)_{k=1}^\infty$ of natural numbers such that
\[
i_k\ge \max\{ i(2^{-k}), n_k\}
\]
and
\[
\Vert v_k-u_{n_k} \Vert \le 2^{-k},\] where

\[
v_k=\sum_{j=1}^{n_k} a_{i_k,j} \, \ssb_{\pi_{n_k}(j)}.
\]
By \ref{dicho:p4}, $\GG_{n_k}(\xx_{i_k})$ is a right-shift of $v_k$ for every $k\in\NN$. Using the principle of small perturbations and subsymmetry, the block basic sequences $\BB[f,\eta,\nu]$ and $(\GG_{n_k}(\xx_{i_k}))_{k=1}^\infty$ are equivalent. By construction, $\Vert \xx_{i_k}-\GG_{n_k}(\xx_{i_k}) \Vert \le 2^{-k}$ for every $k\in\NN$. Applying the principle of small perturbations once again we obtain that $(\GG_{n_k}(\xx_{i_k}))_{k=1}^\infty\simeq (\xx_{i_k})_{k=1}^\infty$. Combining and using the subsymmetry of $\BB$ yields $\BB[f,\eta,\nu]\simeq \BB$.
\end{proof}

\subsection{The case when the basis is symmetric}\label{SymSection}
Recall that if $(\ssb_j)_{j=1}^\infty$ is a symmetric basis of $\SSB$ then $\SSB$ can be equipped with a symmetric norm, i.e., a norm $\Vert\cdot\Vert$ such that
\[
\Vert x\Vert= \left\Vert \sum_{j=1}^\infty \varepsilon_j \, \ssb_j^*(x) \, \ssb_{\pi(j)} \right\Vert,\quad
\quad x\in\SSB, \, |\varepsilon_n |= 1, \, \pi\in\Pi,
\]
(see \cite{Singer1961}). So, whenever $(\ssb_j)_{j=1}^\infty$ is a symmetric basis, we will assume that the norm in $\SSB$ is symmetric.

As we next show, when dealing with symmetric bases, the technique based on seeds is unnecessarily complicated and, as Altshuler et al.\ \cite{ACL1973} did, it suffices to consider block basic sequences generated by a vector. We say that a vector $x\in\SSB$ is non-increasing (resp.\ non-negative) if its coefficient sequence $\Fou(x)$ is.

\begin{lemma}\label{lem:4}Let $\SSB$ be a quasi-Banach space with a symmetric basis. Then every block basic sequence generated by a seed $(f,\eta)$ with $E_\infty[f,\eta]=0$ is equivalent to a block basic sequence generated by a non-increasing and non-negative vector.
\end{lemma}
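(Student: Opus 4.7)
\emph{Plan.} The candidate non-increasing non-negative vector is $y:=\tilde x$, the non-increasing rearrangement of $|x|$, where $x=\sum_{j=1}^\infty a_j\,\ssb_j\in\SSB$ has coefficient transform $f=(a_j)_{j=1}^\infty$. Existence of $x$ is immediate from the symmetry of the norm: for $m\le n$,
\[
\Vert u_n[f,\eta]-u_{m,n}[f,\eta]\Vert=\Bigl\Vert\sum_{j=m+1}^n a_j\,\ssb_{\pi_n(j)}\Bigr\Vert=\Bigl\Vert\sum_{j=m+1}^n a_j\,\ssb_j\Bigr\Vert=\Vert u_n[f,\eta_0]-u_{m,n}[f,\eta_0]\Vert,
\]
so $E_\infty[f,\eta_0]=E_\infty[f,\eta]=0$ and Proposition~\ref{prop:2} supplies $x$. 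Writing $\rho$ for the greedy ordering of $x$, another appeal to symmetry combined with the unconditional convergence of $\sum_j a_j\,\ssb_j$ yields $\tilde x=\sum_{j=1}^\infty |a_{\rho(j)}|\,\ssb_j\in\SSB$, whose coefficient sequence is non-negative and non-increasing by construction.

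I first handle the easy reduction $\BB[f,\eta,\nu]\simeq\BB[f,\eta_0,\nu]$ (isometrically) for every unbounded $\nu=(n_k)_{k=1}^\infty$: with the standard placements $q_k=\sum_{i<k}n_i$, the $k$-th terms of both sides are supported in $\{q_k+1,\dots,q_k+n_k\}$ with the same multiset of coefficients, so assembling the local permutations $q_k+j\mapsto q_k+\pi_{n_k}(j)$ (extended by the identity outside $\bigcup_k\{q_k+1,\dots,q_k+n_k\}$) into a single $\xi\in\Pi$, symmetry makes the associated basis operator $M_\xi$ an isometry of $\SSB$ carrying one block basic sequence to the other.

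The core equivalence is $\BB[f,\eta_0,\nu]\simeq\BB[\Fou(\tilde x),\eta_0,\nu]$ for a suitable $\nu$, which I obtain through the greedy block sequence $(V_{\tau_{\tilde q_k}}(\GG_{n_k}(x)))_{k=1}^\infty$ as a bridge. Since the basis is unconditional, $\GG_n(x)\to x$; coupled with $S_n(x)\to x$ this gives $\Vert S_{n_k}(x)-\GG_{n_k}(x)\Vert\to 0$. Choose $\nu$ increasing fast enough that this quantity is summably small, and replace the standard $q_k$ by $\tilde q_k\uparrow\infty$ spaced so widely that the sets $\tilde q_k+(\NN[n_k]\cup A_{n_k}(x))$ are pairwise disjoint across $k$; the replacement lies in the same equivalence class because subsymmetry permits arbitrary zero-padding between blocks. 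The principle of small perturbations (e.g.\ \cite{AK2016}*{Theorem 1.3.9}) then yields $\BB[f,\eta_0,\nu]\simeq(V_{\tau_{\tilde q_k}}(\GG_{n_k}(x)))_{k=1}^\infty$. Next, the permutation $\sigma\in\Pi$ sending $\tilde q_k+\rho(j)\mapsto \tilde q_k+j$ for $1\le j\le n_k$ on each (disjoint) block, and acting as the identity elsewhere, is well-defined; its associated basis operator $M_\sigma$ is an isometry by symmetry, and it sends the greedy block sequence to $(\sum_{j=1}^{n_k} a_{\rho(j)}\,\ssb_{\tilde q_k+j})_{k=1}^\infty$. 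Absorbing signs via unconditionality turns each $a_{\rho(j)}$ into $|a_{\rho(j)}|=\tilde a_j$, yielding exactly $(V_{\tau_{\tilde q_k}}(S_{n_k}(\tilde x)))_{k=1}^\infty$, which belongs to the equivalence class of $\BB[\Fou(\tilde x),\eta_0,\nu]$.

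The main obstacle is the support bookkeeping in the core step: because the greedy sets $A_{n_k}(x)$ need not be contained in $\NN[n_k]$, the greedy blocks under the standard placements $q_k=\sum_{i<k}n_i$ can overlap, which breaks both the application of the small-perturbation principle and the global definition of $\sigma$. Spreading the placements apart with subsymmetry, and verifying that this replacement is simultaneously compatible with the small-perturbation estimate and with the single permutation $\sigma$ that rearranges every block into non-increasing order, is the technical pivot that glues the two halves of the argument together.
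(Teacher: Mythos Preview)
Your proof is substantially more elaborate than the paper's. The paper's argument is three lines: let $f^*$ be the non-increasing rearrangement of $f$; by symmetry, $\BB[f,\eta,\nu]$ is isometrically equivalent to $\BB[f^*,\nu]$ and $E_\infty[f^*]=0$; apply Proposition~\ref{prop:2}. In other words, the paper collapses both of your steps---removing the positioning $\eta$ and rearranging the coefficients into non-increasing order---into a single symmetric rearrangement of each block, claiming this yields $\BB[f^*,\nu]$ directly and isometrically for the \emph{given} $\nu$.

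Your first reduction $\BB[f,\eta,\nu]\simeq\BB[f,\eta_0,\nu]$ is exactly the mechanism behind the paper's claim, and it already exhibits a block basic sequence generated by the vector $x$ with $\Fou(x)=f$. The heavy machinery you deploy afterwards (greedy sums, small perturbations, spread-out placements) is aimed solely at passing from $x$ to $\tilde x$, and this is where both arguments become delicate. The paper's isometric claim $\BB[f,\eta,\nu]\simeq\BB[f^*,\nu]$ is only literally correct when the multiset $\{|a_1|,\dots,|a_{n_k}|\}$ coincides with $\{a^*_1,\dots,a^*_{n_k}\}$ for every $k$---essentially when $f$ is already non-increasing in modulus, which is precisely the situation supplied by Theorem~\ref{theorem:trichotomy:subsym} in the lemma's sole application. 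Your approach is designed to handle general $f$, but at the cost of \emph{choosing} $\nu$ (you write ``for a suitable $\nu$''), so you too are not proving the lemma for an arbitrary block basic sequence generated by the seed. In short: the paper buys brevity by (tacitly) leaning on the non-increasing hypothesis available from the dichotomy theorem; you buy generality in $f$ at the expense of generality in $\nu$ and a good deal of bookkeeping.

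One technical point in your argument: the map $\sigma$ is not a bijection of $\NN$ as written. Its prescribed domain $\bigcup_k(\tilde q_k+A_{n_k}(x))$ and range $\bigcup_k(\tilde q_k+\NN[n_k])$ differ in general, so ``identity elsewhere'' does not complete it to a permutation. The fix is routine---extend $\sigma$ on each big block $\tilde q_k+(\NN[n_k]\cup A_{n_k}(x))$ by any bijection between the two leftover sets of equal size---but it should be stated.
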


\begin{proof}Let $(f,\eta)$ be a seed with $E_\infty[f,\eta]=0$ and $\nu$ be an unbounded sequence of natural numbers. Let $f^*$ be the non-increasing rearrangement of $f$. By symmetry, $\BB[f,\eta,\nu]$ is isometrically $\BB[f^*,\nu]$, and $E_\infty[f^*,\eta]=0$. Applying Proposition~\ref{prop:2} puts an end to the proof.
\end{proof}

\begin{lemma}\label{lemma:sym:second} Suppose that all permutations of a sequence $(\xx_k)_{k=1}^\infty$ in a quasi-Banach space $\XX$ are subsymmetric bases. Then $(\xx_k)_{k=1}^\infty$ is a symmetric basis.
\end{lemma}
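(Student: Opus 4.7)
The plan is to show that for every permutation $\sigma$ of $\NN$ one has $(\xx_{\sigma(k)})_{k=1}^\infty\simeq (\xx_k)_{k=1}^\infty$. Since $(\xx_k)_{k=1}^\infty$ is itself (the identity permutation) a subsymmetric basis of $[\xx_k\colon k\in\NN]$, this will establish symmetry.

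The key observation is that, given any permutation $\sigma$, there is an increasing map $\phi\in\OO$ such that $\sigma\circ\phi$ is also increasing. Indeed, since $\sigma$ is a bijection of $\NN$, the set $\sigma(\NN)=\NN$ is unbounded, so we can recursively define $\phi(1)=1$ and, given $\phi(n)$, pick $\phi(n+1)>\phi(n)$ such that $\sigma(\phi(n+1))>\sigma(\phi(n))$ (such an index exists because $\{\sigma(j)\colon j>\phi(n)\}$ is unbounded). Then both $\phi$ and $\sigma\circ\phi$ belong to $\OO$.

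With such a $\phi$ in hand, the sequence $(\xx_{\sigma(\phi(k))})_{k=1}^\infty$ plays the role of a common subsequence of both $(\xx_k)_{k=1}^\infty$ and $(\xx_{\sigma(k)})_{k=1}^\infty$. By the assumed subsymmetry of $(\xx_k)_{k=1}^\infty$ applied to the increasing map $\sigma\circ\phi$, we get
\[
(\xx_{\sigma(\phi(k))})_{k=1}^\infty \simeq (\xx_k)_{k=1}^\infty,
\]
while by the assumed subsymmetry of $(\xx_{\sigma(k)})_{k=1}^\infty$ applied to the increasing map $\phi$, we get
\[
(\xx_{\sigma(\phi(k))})_{k=1}^\infty \simeq (\xx_{\sigma(k)})_{k=1}^\infty.
\]
Chaining the two equivalences yields $(\xx_k)_{k=1}^\infty\simeq (\xx_{\sigma(k)})_{k=1}^\infty$, as required. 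Since $\sigma$ was arbitrary and $(\xx_k)_{k=1}^\infty$ is (in particular) a basic sequence, it is a symmetric basis of its closed linear span.

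There is no significant obstacle: the argument rests entirely on the elementary construction of the increasing selector $\phi$, which is guaranteed by the fact that a permutation of $\NN$ has unbounded range on every cofinal tail. Everything else is a direct invocation of the definition of subsymmetry applied twice, once to the original sequence and once to its $\sigma$-rearrangement.
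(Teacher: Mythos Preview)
Your proof is correct and is essentially identical to the paper's: both construct, for a given permutation $\sigma$, an increasing $\phi$ with $\sigma\circ\phi$ increasing, and then chain the two subsymmetry equivalences $(\xx_k)\simeq(\xx_{\sigma(\phi(k))})\simeq(\xx_{\sigma(k)})$. You simply spell out the recursive construction of $\phi$ in slightly more detail than the paper does.
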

\begin{proof}Let $\pi\in\Pi$. We recursively construct $\phi\in\OO$ such that $\pi\circ\phi\in\OO$. Since both $(\xx_{\pi(k)})_{k=1}^\infty$ and $(\xx_k)_{k=1}^\infty$ are subsymmetric bases, we have $(\xx_{\pi(k)})_{k=1}^\infty\simeq (\xx_{\pi(\phi(k)})_{k=1}^\infty\simeq (\xx_k)_{k=1}^\infty$.
\end{proof}

Our following result is an improvement of the main result of \cite{FG2015}. Note that we do not impose the symmetric basis to be boundedly complete and that our result remains valid for non-locally convex spaces.
\begin{theorem}[cf. \cite{FG2015}*{Theorem 1.2}]\label{result:dichotomy:sym}
Suppose that $(\ssb_j)_{j=1}^\infty$ is a symmetric basis of the quasi-Banach space $\SSB$. Let $\BB$ be a subsymmetric basic sequence in $\SSB$. Then:

\begin{enumerate}[label={(\alph*)}]

\item\label{result:dichotomy:sym:a} Either $\BB$ is equivalent to a subsymmetric basic sequence generated by a vector, in which case $\BB$ is symmetric, or

\item\label{result:dichotomy:sym:b} $\BB$ is equivalent to a block basic sequence and dominates a semi-normalized uniformly null block basic sequence.
\end{enumerate}
\end{theorem}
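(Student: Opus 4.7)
The plan is to combine the general dichotomy of Theorem~\ref{theorem:trichotomy:subsym} with the simplification that Lemma~\ref{lem:4} affords in the symmetric setting to reduce to the case that $\BB$ is equivalent to a basic sequence generated by a vector, and then to upgrade subsymmetry to symmetry via Lemma~\ref{lemma:sym:second}. Concretely, applying Theorem~\ref{theorem:trichotomy:subsym} to $\BB$, either alternative~\ref{theorem:trichotomy:subsym:b} occurs and~\ref{result:dichotomy:sym:b} holds at once, or $\BB$ is equivalent to a subsymmetric basic sequence generated by a non-negative, non-increasing seed $(f,\eta)$ with $E_\infty[f,\eta]=0$. In the latter case, Lemma~\ref{lem:4} lets me dispense with the positioning entirely: $\BB\simeq\BB[\Fou(x),\eta_0,\nu]$ for a non-negative, non-increasing vector $x\in\SSB$ and an unbounded sequence $\nu=(n_k)_{k=1}^\infty$ for which the summability condition $\sum_{k}(E_{n_k}[\Fou(x),\eta_0])^{\overline p}<\infty$ holds. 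This already yields the first half of alternative~\ref{result:dichotomy:sym:a}.

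It remains to show that such a $\BB$ is actually symmetric. Invoking Lemma~\ref{lemma:sym:second}, it suffices to verify that the permuted sequence $(\xx_{\pi(k)})_{k=1}^\infty$ is a subsymmetric basic sequence for every $\pi\in\Pi$. The blocks of $\BB[\Fou(x),\eta_0,\nu]$ are of the form $V_{\tau_{q_k}}(u_{n_k})$, with $u_{n_k}=\sum_{j=1}^{n_k} a_j\,\ssb_j$ and $q_k=\sum_{i<k}n_i$, supported on the consecutive intervals $\{q_k+1,\dots,q_k+n_k\}$, which tile $\NN$. After permutation by $\pi$ these supports remain disjoint but no longer appear in increasing order. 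The crucial step, and the only place where full symmetry of $(\ssb_j)_{j=1}^\infty$ is used rather than mere subsymmetry, is to construct a permutation $\sigma\in\Pi$ sending $\{q_{\pi(k)}+1,\dots,q_{\pi(k)}+n_{\pi(k)}\}$ onto the consecutive interval $\{q'_k+1,\dots,q'_k+n_{\pi(k)}\}$, where $q'_k=\sum_{i<k}n_{\pi(i)}$, in an order-preserving fashion that respects the coefficient positions within each block. Since the norm on $\SSB$ is symmetric, $\sigma$ induces an isometry of $\SSB$, and this isometry identifies $(\xx_{\pi(k)})_{k=1}^\infty$ with $\BB[\Fou(x),\eta_0,\nu\circ\pi]$, where $\nu\circ\pi:=(n_{\pi(k)})_{k=1}^\infty$.

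To finish, note that $\nu\circ\pi$ is unbounded and that reordering a convergent series of non-negative terms by $\pi$ leaves the sum unchanged; thus $\nu\circ\pi$ inherits the summability condition from $\nu$, and Lemma~\ref{SubSymFromSeed}\ref{SubSymFromSeed:b} guarantees that $\BB[\Fou(x),\eta_0,\nu\circ\pi]$ is a subsymmetric basic sequence. Consequently, so is the isometrically equivalent sequence $(\xx_{\pi(k)})_{k=1}^\infty$, and Lemma~\ref{lemma:sym:second} completes the proof. I expect the principal technical obstacle to be the bookkeeping in the rearrangement step: one must verify that $\sigma$ is genuinely well defined on all of $\NN$ (for which the tiling observation is essential) and that it carries the permuted block basic sequence precisely onto the normal form $\BB[\Fou(x),\eta_0,\nu\circ\pi]$, so that the summability argument in the third paragraph can be applied cleanly.
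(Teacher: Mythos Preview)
Your proposal is correct and follows essentially the same route as the paper: apply Theorem~\ref{theorem:trichotomy:subsym}, use Lemma~\ref{lem:4} to reduce to the vector case, then prove symmetry by showing each permuted sequence $(\xx_{\pi(k)})_{k=1}^\infty$ is isometrically equivalent (via the symmetry of $(\ssb_j)_{j=1}^\infty$) to $\BB[\Fou(x),\eta_0,\nu\circ\pi]$, which is subsymmetric by Lemma~\ref{SubSymFromSeed}\ref{SubSymFromSeed:b} since the summability hypothesis is preserved under permutation, and conclude with Lemma~\ref{lemma:sym:second}. The paper's proof is terser---it carries out the symmetry argument for an arbitrary seed $(f,\eta)$ rather than passing first to the trivial positioning---but the logical skeleton is identical; your explicit construction of $\sigma$ merely unpacks the one-line assertion ``from the symmetry of $(\ssb_j)_{j=1}^\infty$ it follows that $\BB[f,\eta,\nu\circ\pi]\simeq(\yy_{\pi(k)})_{k=1}^\infty$.''
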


\begin{proof}By Theorem~\ref{theorem:trichotomy:subsym} and Lemma~\ref{lem:4}, it suffices to prove that every subsymmetric basis $\BB$ generated by a seed $(f,\eta)$ with $E_\infty[f,\eta]=0$ is symmetric. Let
$
\mu=(m_k)_{k=1}^\infty$ be such that
$
\sum_{k=1}^\infty (E_{m_k}[f,\eta])^{\overline p}<\infty.
$
By Proposition~\ref{SubSymFromamenableSeed} and Lemma~\ref{SubSymFromSeed}~\ref{SubSymFromSeed:b}, it suffices to prove that $(\yy_k)_{k=1}^\infty=\BB[f,\eta,\nu]$ is symmetric. Let $\pi\in\Pi$. From the symmetry of $(\ssb_j)_{j=1}^\infty$ it follows that $\BB[f,\eta,\nu\circ\pi]\simeq(\yy_{\pi(k)})_{k=1}^\infty$. Applying again Lemma~\ref{SubSymFromSeed}~\ref{SubSymFromSeed:b} yields that, first $\BB[f,\eta,\nu\circ\pi]$ and then $(\yy_{\pi(k)})_{k=1}^\infty$, are subsymmetric basic sequences. We finish the proof by appealing to Lemma~\ref{lemma:sym:second}.
\end{proof}

\section{Symmetric and subsymmetric basic sequences in Garling sequence spaces}\label{GarlingSection}

\noindent Let $0< p<\infty$ and let $\ww=(w_j)_{j=1}^\infty$ be a non-increasing sequence of positive scalars. Given a sequence of (real or complex) scalars $x=(a_j)_{j=1}^\infty$ we put
\begin{equation}\label{GarlingNorm}
\Vert x \Vert_g =\Vert x \Vert_{g(\ww,p)} = \sup_{\phi\in\OO } \left( \sum_{j=1}^\infty |a_{\phi(j)}|^p w_j \right)^{1/p}.
\end{equation}
The \emph{Garling sequence space} $g(\ww,p)$ is the quasi-Banach space consisting of all sequences $x$ with $\Vert x \Vert_{g}<\infty$.

Notice that if we replace ``$\phi\in\OO$'' with ``$\phi\in\Pi$'' in \eqref{GarlingNorm} we obtain the norm defining the \emph{weighted Lorentz sequence space}
\[
d(\ww,p):=\left\{ (a_j)_{j=1}^\infty \in c_0 \colon \left( \sum_{j=1}^\infty (a_j^*)^p w_j \right)^{1/p}<\infty\right\},
\]
where $(b_j^*)_{i=1}^\infty$ denotes the decreasing rearrangement of $(b_j)_{j=1}^\infty$. Thus Garling sequence spaces $g(\ww,p)$ can be regarded as a variation of the weighted Lorentz sequence spaces $d(\ww,p)$.

We shall impose the further conditions $\ww\in c_0$ and $\ww\notin\ell_1$ to avoid the trivial cases $g(\ww,p)=\ell_p$ and $g(\ww,p)= \ell_\infty$, respectively. We will assume as well that $\ww$ is normalized, i.e., $w_1=1$. Thus, we put
\[
\WW:=\left\{(w_j)_{j=1}^\infty\in c_0\setminus\ell_1:1=w_1\geq w_2\ge\cdots \ge w_j \ge w_{j+1} \ge\cdots>0\right\}
\]
and we restrict our attention to \emph{weights} $\ww\in\WW$. Note that for every $(w_j)_{j=1}^\infty\in\WW$ and $N\in\NN$ we have
\begin{equation}\label{eq:weight}
\sum_{i=1}^\infty (w_i-w_{i+N})
= \sum_{i=1}^\infty \sum_{j=1}^{N} (w_{i+j-1} -w_{i+j})
=\sum_{j=1}^{N} w_j.
\end{equation}
The geometry of Garling sequence spaces has been studied in the locally convex range of $p$, i.e., for $p\ge 1$, in \cites{AAW2018,AADK2019}. Some of the results proved there can be transferred to non-locally convex Garling sequence spaces using that $g(\ww,p)$ is the $p$-convexification of the Banach lattice $g(\ww,1)$. Let us single out some properties of interest for the purposes of the present paper and leave the straightforward details for the reader.

\begin{theorem}\label{thm:garling1}Let $0<p<\infty$ and $\ww\in\WW$.
\begin{enumerate}[label=(\alph*)]
\item\label{thm:garling1:a} $\EE[g(\ww,p)]$ is a $1$-subsymmetric boundedly complete basis of the whole space $g(\ww,p)$.
\item\label{thm:garling1:b} $g(\ww,p)$ is a $p$-convex quasi-Banach lattice with constant one. In particular, we have $\BB\lesssim_1 \EE[\ell_p]$ for
every normalized basic sequence $\mathcal B$ of $g(\ww,p)$.
\item\label{thm:garling1:c} $g(\ww,p)$ is not $q$-concave por any $q<\infty$, i.e., $\ell_\infty$ is finitely representable in $g(\ww,p)$.
\item\label{thm:garling1:d} Every normalized uniformly null block basic sequence with respect to the unit vector system of $g(\ww,p)$ has, for any $C>1$, a subsequence $\BB$ such that $ \EE[\ell_p]\lesssim_C \BB$. Moreover, if $p\ge 1$ we can ensure that $\BB$ spans a complemented subspace of $g(\ww,p)$.
\end{enumerate}
\end{theorem}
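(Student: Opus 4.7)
The plan is to use the explicit formula $\|x\|_g^p=\sup_{\phi\in\OO}\sum_j|a_{\phi(j)}|^p w_j$ throughout; only~\ref{thm:garling1:d} requires substantive work. For~\ref{thm:garling1:a}, the $1$-subsymmetry is immediate from the invariance of the outer sup over $\OO$ under sign changes and under pre-composition by any fixed $\phi_0\in\OO$ (just replace the dummy $\phi$ by $\phi_0\circ\phi$). That $\EE[g(\ww,p)]$ is a basis reduces to the tail estimate $\|x-S_n(x)\|_g^p=\sup_{\phi\in\OO}\sum_{j:\phi(j)>n}|a_{\phi(j)}|^p w_j\to 0$ for $x\in g(\ww,p)$, which follows from monotone convergence in the supremum; bounded completeness comes from the same interchange, since bounded partial sums force each series $\sum_{j\le N}|a_{\phi(j)}|^p w_j$ to be convergent uniformly in $\phi$.

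For~\ref{thm:garling1:b}, $p$-convexity with constant one is the one-line computation
\[
\Bigl\|(\textstyle\sum_k |x_k|^p)^{1/p}\Bigr\|_g^p=\sup_\phi\sum_k\sum_j|x_k(\phi(j))|^p w_j\le\sum_k\|x_k\|_g^p,
\]
obtained by pushing $\sum_k$ through $\sup_\phi$. The consequence $\BB\lesssim_1 \EE[\ell_p]$ for a normalized basic sequence $\BB$ reduces by a Bessaga--Pe\l czy\'nski gliding-hump argument to the disjoint positive block case, in which $|\sum_k a_k y_k|^p=\sum_k|a_k|^p|y_k|^p$ pointwise, so the $p$-convexity inequality applied to $a_ky_k$ gives the required upper bound. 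For~\ref{thm:garling1:c}, I would test $q$-concavity against disjoint normalized ``fat blocks'' $\tilde y_k=(\sum_{j=1}^M w_j)^{-1/p}\chi_{A_k}$ with $|A_k|=M$, which give
\[
\Bigl\|\textstyle\sum_{k=1}^n\tilde y_k\Bigr\|_g=\bigl(\textstyle\sum_{j=1}^{nM}w_j\big/\sum_{j=1}^M w_j\bigr)^{1/p}.
\]
Using identity~\eqref{eq:weight} together with the condition $\ww\in c_0\setminus\ell_1$, one exhibits, for any prescribed $q<\infty$, pairs $(M,n)$ making this ratio $o(n^{p/q})$ and so breaking $q$-concavity for every finite $q$; the lattice form of Maurey--Pisier then translates this into finite representability of $\ell_\infty$.

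For~\ref{thm:garling1:d}, the substantive step, by unconditionality I may assume each $y_k\ge 0$. Writing $\supp y_k=\{j^{(k)}_1<\cdots<j^{(k)}_{N_k}\}$, $y_k^{(l)}=y_k(j^{(k)}_l)$, and $M_k=\sum_{i<k}N_i$, evaluating the Garling norm at the single $\phi\in\OO$ that enumerates $\bigcup_k\supp y_k$ in increasing order gives
\[
\Bigl\|\textstyle\sum_k a_k y_k\Bigr\|_g^p\;\ge\;\sum_k|a_k|^p\sum_{l=1}^{N_k}(y_k^{(l)})^p w_{l+M_k}.
\]
Combining the normalization $\sum_l(y_k^{(l)})^p w_l=1$, the uniform-nullity bound $(y_k^{(l)})^p\le\varepsilon_k^p\to 0$, and the telescoping identity~\eqref{eq:weight}, the weight-shift defect satisfies $\sum_l(y_k^{(l)})^p(w_l-w_{l+M_k})\le\varepsilon_k^p\sum_{j=1}^{M_k}w_j$. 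A greedy diagonal extraction---picking $k_n$ large enough that $\varepsilon_{k_n}^p\sum_{j=1}^{M_{k_n}}w_j<2^{-n}$---makes this defect summable and each of its terms eventually smaller than any prescribed $\varepsilon$, so that $\|\sum_n a_n y_n\|_g^p\ge(1-\varepsilon)\sum_n|a_n|^p$, i.e.\ $\EE[\ell_p]\lesssim_C \BB$ with $C$ arbitrarily close to $1$. The main obstacle is precisely this subsequence extraction, which must interleave the vanishing $\varepsilon_k$ against the increasing partial sums $\sum_{j=1}^{M_k}w_j$; uniform nullity is exactly what makes the waiting procedure feasible. Complementation in the Banach range $p\ge 1$ is obtained from the coefficient functionals of $\BB$, extended to bounded functionals on $g(\ww,p)$ via the subsymmetric basis $\EE[g(\ww,p)]$.
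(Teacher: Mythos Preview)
The paper does not prove Theorem~\ref{thm:garling1}; it simply cites \cite{AAW2018} and \cite{AADK2019} for the case $p\ge 1$ and remarks that the general case follows because $g(\ww,p)$ is the $p$-convexification of $g(\ww,1)$. So your proposal has to be judged against those sources.

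Your treatment of~\ref{thm:garling1:a} and the $p$-convexity half of~\ref{thm:garling1:b} is correct. Your argument for~\ref{thm:garling1:d} is essentially the one in \cite{AAW2018}, with one notational slip: in the recursive extraction the relevant shift is $M'_n=\sum_{m<n}N_{k_m}$ (the total size of the \emph{already selected} blocks), not the original $M_{k_n}$; since $M'_n$ is fixed once $k_1,\dots,k_{n-1}$ are chosen, you may then take $k_n$ large enough that $\varepsilon_{k_n}^p\sum_{j\le M'_n}w_j<2^{-n}$. The complementation for $p\ge 1$ requires an actual bounded projection (built in \cite{AAW2018} by an averaging argument), not merely an extension of coefficient functionals, but you are right that this is routine.

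There is, however, a genuine gap in your approach to~\ref{thm:garling1:c}. Your flat-block test yields
\[
\Bigl\|\sum_{k=1}^n\tilde y_k\Bigr\|_g=\bigl(W_{nM}/W_M\bigr)^{1/p},\qquad W_N:=\sum_{j=1}^N w_j,
\]
and this computation is \emph{identical} in $g(\ww,p)$ and in $d(\ww,p)$. For a regular weight such as $w_j=j^{-\alpha}$ with $0<\alpha<1$ one has $W_{nM}/W_M\sim n^{1-\alpha}$ uniformly in $M$, so the ratio cannot be made $o(n^{p/q})$ once $q\ge p/(1-\alpha)$; indeed $d(\ww,p)$ \emph{is} $q$-concave for large enough $q$ in this range. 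A test that is symmetric in the blocks cannot separate Garling spaces from Lorentz spaces, and therefore cannot establish~\ref{thm:garling1:c}. The argument in \cite{AADK2019} (reproduced later in this paper as Proposition~\ref{prop:nonsymmetric} together with Lemma~\ref{lem:steve1}) exploits precisely the order-sensitivity of the Garling norm: one places long normalized averages $z_{n_1},\dots,z_{n_k}$ in \emph{decreasing} order of length and shows that the norm of their sum stays close to $1$, which gives an almost-isometric copy of $\ell_\infty^k$. That use of the failure of symmetry is the missing idea.

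A smaller point on~\ref{thm:garling1:b}: your Bessaga--Pe\l czy\'nski reduction to disjoint blocks gives the domination $\BB\lesssim\EE[\ell_p]$ but not with constant~$1$; the sharp constant follows directly from $p$-convexity only for disjointly supported sequences (which is all the paper actually uses later).
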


\subsection{Garling spaces on $\QQ$} With the aim to classify the subsymmetric basic sequences of $g(\ww,p)$, we shall next introduce a variation of Garling sequence spaces.
Let $\ww\in\WW$ and $0<p<\infty$. The quasi-Banach lattice $g(\QQ,\ww,p)$ consists of all families $y=(a_q)_{q \in \QQ}$ such that
\[
\|y\|_g^p = \sup\left\{ \sum_{i=1}^N |a_{q_i}|^p w_i \colon q_1<\cdots q_i<\dots <q_N,\, N \in \NN\right\} <\infty.
\]
We gather together some properties of $g(\QQ,\ww,p)$. Throughout this section, $(\ssb_j)_{j=1}^\infty$ will denote the unit vector system of $g(\ww,p)$, while $(\ee_q)_{q\in\QQ}$ will denote the unit vector system of $g(\QQ,\ww,p)$. From now on, given $A\subseteq\QQ$ and a family $(a_q)_{q\in A}$, the formal series $\sum_{q\in A} a_q\, \ee_q$ denotes the vector $(c_q)_{q\in \QQ}$ in $\FF^\QQ$ defined by $c_q=a_q$ if $q\in A$ and $c_q=0$ otherwise. If $\phi\colon A\to \QQ$ is increasing we define
\[
U_\phi\colon \{ y \in \FF^\QQ \colon \supp(y)\subseteq A \} \to \FF^\QQ , \quad
\sum_{q\in A} a_q\, \ee_q\mapsto \sum_{q\in A} a_q \, \ee_{\phi(q)}.
\]
We say that $z\in\FF^\QQ$ is a shift of $y\in\FF^\QQ$ if $z=U_\phi(y)$ for some $\phi\colon\supp(y)\to\QQ$ increasing.  Similarly, we say that $z\in\FF^\QQ$ is a shift of $x\in\FF^\NN$ if  $z=I_\phi(x)$ for some $\phi\colon\NN\to\QQ$ increasing, where
\begin{equation*}%\label{Iphi}
I_\phi\colon \FF^\NN \to \FF^\QQ , \quad
(a_j)_{j=1}^\infty \mapsto \sum_{j=1}^\infty a_j \, \ee_{\phi(j)}.
\end{equation*}

\begin{lemma}\label{lem:Ybasicproperties}
Let $\ww\in\WW$ and $0<p<\infty$. Then
\begin{enumerate}[label=(\alph*)]
\item\label{lem:Ybasicproperties:a} Given $y=(a_q)_{q \in \QQ}$,
\[
\|y\|_g^p = \sup\left\{ \sum_{i=1}^N |a_{q_i}|^p w_i \colon q_1<\cdots q_i<\dots <q_N, \, a_{q_i}\not=0, \, N \in \NN\right\}.
\]
\item\label{lem:Ybasicproperties:b} $\Vert y \Vert_g =\sup \{ \Vert y \chi_A\Vert_g \colon A\subseteq \QQ \text{ finite}\}$ for every $y\in \FF^\QQ$.

\item\label{lem:Ybasicproperties:c} $g(\QQ,\ww,p)$ is a $p$-convex quasi-Banach lattice with constant $1$.

\item\label{lem:Ybasicproperties:d} If $z\in\FF^\QQ$ is a shift of  $x\in  g(\ww,p)$, then $z\in g(\QQ,\ww,p)$, and $\Vert z\Vert_g=\Vert x\Vert_g$.

\item \label{lem:Ybasicproperties:e} $g(\QQ,\ww,p)$ has the following subsymmetry property: for every $\phi\colon A\subseteq \QQ \to \QQ$ increasing $U_\phi$ restricts to an isometric embedding from $\{ y\in g(\QQ,\ww,p) \colon \supp(y)\subseteq A\}$ into $g(\QQ,\ww,p)$.
\end{enumerate}
\end{lemma}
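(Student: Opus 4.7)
The plan is to verify each item by unwinding the definition of $\|\cdot\|_g$ as a supremum over finite increasing tuples in $\QQ$. The only structural feature of the weight I will need is monotonicity $w_1 \geq w_2 \geq \cdots$, and it enters only in (a).

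For (a), the inequality ``restricted sup $\leq$ full sup'' is immediate. For the converse I would start from an arbitrary tuple $q_1 < \cdots < q_N$, delete the indices $i$ with $a_{q_i} = 0$, and re-index the survivors: the surviving nonzero terms then get paired with the weights $w_1, \ldots, w_{N'}$ instead of with some subset of $\{w_1, \ldots, w_N\}$, so monotonicity of $(w_i)$ makes the sum weakly increase. For (b), the pointwise bound $|y \chi_A| \leq |y|$ yields $\|y \chi_A\|_g \leq \|y\|_g$ directly from the sup definition, giving $\sup_A \leq \|y\|_g$; conversely, each test tuple in the definition of $\|y\|_g$ involves only the finite set $A = \{q_1, \ldots, q_N\}$, whence its sum is bounded by $\|y \chi_A\|_g^p$.

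For (c) I would let $z = \left(\sum_k |y_k|^p\right)^{1/p}$ and exchange sums: for any increasing tuple $q_1 < \cdots < q_N$,
\[
\sum_{i=1}^N z_{q_i}^p\, w_i \;=\; \sum_k \sum_{i=1}^N |y_k(q_i)|^p\, w_i \;\leq\; \sum_k \|y_k\|_g^p,
\]
and then take the supremum in the tuple, obtaining $p$-convexity with constant $1$.

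The substantive items (d) and (e) reduce, in my plan, to the observation that an order-preserving relabeling induces a bijection between the tuples indexing the two relevant suprema. For (e), given an increasing $\phi\colon A\to\QQ$ and $y$ supported in $A$, the assignment $(q_i) \mapsto (\phi(q_i))$ is a bijection between increasing tuples in $A$ and increasing tuples in $\rang(\phi)$ that preserves coefficient values, so the two defining suprema match term by term, and the isometric-embedding conclusion follows from (a)--(b). Part (d) is the same with $A$ replaced by $\NN$ in its natural order. The only mild bookkeeping point is that $\|\cdot\|_{g(\ww,p)}$ is written as a sup over infinite maps $\psi \in \OO$ while $\|\cdot\|_{g(\QQ,\ww,p)}$ uses finite tuples; since all summands are non-negative, truncating an infinite $\psi$ to its first $N$ indices and, conversely, extending a finite tuple $(j_1,\ldots,j_N)$ to any element of $\OO$ changes the sum monotonically in the expected direction, so the two conventions yield the same value. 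Beyond this no real obstacle arises.
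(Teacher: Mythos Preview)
Your proposal is correct and matches the paper's approach, which is essentially a one-line proof: (a) follows from the monotonicity of $\ww$, (b)--(d) are straightforward from the definition, and (e) is inferred from (b) and (d). The only organizational difference is that you prove (e) directly from (a) via the tuple bijection and then note (d) is analogous, whereas the paper establishes (d) first and deduces (e) by reducing to finitely supported $y$ via (b) and invoking (d) twice; both routes are equally elementary.
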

\begin{proof}
\ref{lem:Ybasicproperties:a} follows from the monotonicity of $\ww$.

\ref{lem:Ybasicproperties:b}, \ref{lem:Ybasicproperties:c} and \ref{lem:Ybasicproperties:d} are straightforward from the definition, and \ref{lem:Ybasicproperties:e} can be inferred from \ref{lem:Ybasicproperties:b} and \ref{lem:Ybasicproperties:d}.
\end{proof}

As we will prove in the forthcoming Section~\ref{prop:notsym}, the unit vector system, which is a $1$-unconditional basic sequence of $g(\QQ,\ww,p)$, does not span the whole space $g(\QQ,\ww,p)$. So, we define $g_0(\QQ,\ww,p)$ as the closure of $c_{00}(\QQ)$ in $g(\QQ,\ww,p)$. Note that a formal series $y=\sum_{q\in A}^\infty a_q \, \ee_{q}$ converges in $g(\QQ,\ww,p)$ if and only if $y\in g_0(\QQ,\ww,p)$.

\begin{lemma}\label{lem:additionalbasicproperty}Let $\ww\in\WW$ and $0<p<\infty$. Then for all $y=(a_q)_{q \in \QQ} \in g_0(\QQ,\ww,p)$ and $\varepsilon>0$, there exists $M \in \NN$ such that
\[
\sum_{i=1}^N |a_{q_i}|^p w_{M+i-1} < \varepsilon
\]
for all $N\in\NN$ and $q_1<\cdots <q_i<\cdots <q_N$.
\end{lemma}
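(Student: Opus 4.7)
The plan is to exploit the density of $c_{00}(\QQ)$ in $g_0(\QQ,\ww,p)$ together with the decay $w_M\to 0$. Fix $\varepsilon>0$ and choose $z=\sum_{q\in F} b_q\,\ee_q \in c_{00}(\QQ)$ with $F\subseteq\QQ$ finite, $|F|=K$, and $\|y-z\|_g$ to be specified momentarily; write $B=\max_{q\in F}|b_q|^p$ and $C_p=\max\{1,2^{p-1}\}$, so that $|\alpha+\beta|^p\le C_p(|\alpha|^p+|\beta|^p)$ for all scalars, uniformly in both cases $p\le 1$ and $p\ge 1$.

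For an arbitrary $M\in\NN$ and any finite tuple $q_1<\cdots<q_N$ in $\QQ$, I would split
\[
\sum_{i=1}^N |a_{q_i}|^p w_{M+i-1} \le C_p\sum_{i=1}^N |a_{q_i}-b_{q_i}|^p w_{M+i-1}+C_p\sum_{i=1}^N|b_{q_i}|^p w_{M+i-1}.
\]
Since $\ww$ is non-increasing we have $w_{M+i-1}\le w_i$, so the first sum on the right is bounded by $\sum_{i=1}^N|a_{q_i}-b_{q_i}|^p w_i$, which in turn is at most $\|y-z\|_g^p$ by the very definition of the quasi-norm on $g(\QQ,\ww,p)$ (Lemma~\ref{lem:Ybasicproperties}~\ref{lem:Ybasicproperties:a}). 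For the second sum, $b_{q_i}=0$ unless $q_i\in F$, so at most $K$ of the terms are non-zero; each such term is bounded by $B w_M$, whence the second sum is at most $K B w_M$.

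Putting the pieces together yields
\[
\sum_{i=1}^N |a_{q_i}|^p w_{M+i-1}\le C_p\|y-z\|_g^p + C_p K B w_M.
\]
The proof is completed in two steps: first choose $z$ so that $C_p\|y-z\|_g^p<\varepsilon/2$ (possible since $y\in g_0(\QQ,\ww,p)$), fixing thereby the constants $K$ and $B$; then, invoking $\ww\in c_0$, pick $M$ large enough that $C_p K B w_M<\varepsilon/2$. Both estimates are uniform over the tuple $q_1<\cdots<q_N$, which gives the desired conclusion.

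There is no real obstacle here; the only bookkeeping issues are the uniform treatment of the two regimes $p<1$ and $p\ge 1$ (absorbed into the constant $C_p$) and the mismatch between the shifted weights $w_{M+i-1}$ and the weights $w_i$ appearing in the definition of $\|\cdot\|_g$, which is handled by monotonicity.
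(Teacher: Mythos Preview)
Your proof is correct and follows essentially the same strategy as the paper's: approximate $y$ by a finitely supported vector, control the ``finite'' part using $w_M\to 0$, and control the remainder by the $g$-norm of the difference. The only difference is cosmetic: the paper approximates by the coordinate projection $\sum_{q\in A}a_q\,\ee_q$ of $y$ itself onto a finite set $A$, so for each $q$ one has either $a_q-b_q=0$ or $b_q=0$; the split into the two sums is then exact (indexed by $q_i\in A$ versus $q_i\notin A$) and no constant $C_p$ is needed.
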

\begin{proof}There is $A \subset \QQ$ finite such that $\|y - \sum_{q \in A} a_{q} \, \ee_q\|< 2^{-1/p} \varepsilon$. Now choose $M\ge1$ such that $w_M(\sum_{q \in A}|a_q|^p )< \varepsilon^p/2$. Then for all $N \in\NN$ and $(q_i)_{i=1}^N$ increasing we have
\begin{align*}
\sum_{i=1}^N |a_{q_i}|^pw_{M-1+i}
&=\sum_{i=1}^N |a_{q_i}|^p \chi_A(q_i) w_{M-1+i} + \sum_{i=1}^N |a_{q_i}|^p \chi_{A^c}(q_i) w_{M-1+i} \\
&\le w_M \sum_{i=1}^N |a_{q_i}|^p \chi_A(q_i) + \sum_{i=1}^N |a_{q_i}|^p \chi_{A^c}(q_i) w_{i} \\
&\le w_M\left(\sum_{q \in A}|a_q|^p \right) + \left\|y - \sum_{q \in A} a_{q} e_q\right\|_g^p \\
&< \frac{\varepsilon^p}{2} + \frac{\varepsilon^p}{2} = \varepsilon^p.\qedhere
\end{align*}
\end{proof}

\subsection{A dichotomy theorem for Garling sequence spaces.}\

\noindent In this section we will establish a correspondence between positionings and one-to-one sequences of rational numbers. This will be done in Lemma~\ref{lem:PvsS} below.
%, whose elementary proof we will omit.

Given a positioning $\eta=(d_n)_{n=1}^\infty$ with $\pi[\eta]=(\pi_n)_{n=1}^\infty$ we recursively construct the sequence $q[\eta]=(q_n)_{n=1}^\infty$ in $\QQ\cap(0,1)$ by
\[
q_n=\frac{q_a+q_b}{2},
\]
where $a$ and $b$ in $\NN[n-1]\cup\{0,\infty\}$ are such that $\pi_{n-1}(a)=d_n-1$ and $\pi_{n-1}(b)=d_n$ ($b=\infty$ if $d_n=n$), with the convention $q_0=0$ and $q_\infty=1$.
We say that a sequence $(r_n)_{n=1}^\infty$ in $\QQ$ is \emph{compatible} with the positioning $\eta$ if
\[
r_{\pi_n^{-1}(1)} < \cdots <r_{\pi_n^{-1}(i)}<\cdots< r_{\pi_n^{-1}(n)}
\]
for every $n\in\NN$; in other words, $r_i<r_j$ whenever $n\in\NN$ and $i$, $j\in\NN[n]$ are such that $\pi_n(i)<\pi_n(j)$.

\begin{lemma}\label{lem:PvsS}Any one-to-one sequence in $\QQ$ is compatible with a unique positioning. Conversely, given a positioning $\eta$, the sequence $q[\eta]=(q_n)_{n=1}^\infty$ is compatible with $\eta$, and it is essentially unique with this property in the following sense: the one-to-one sequence $(r_n)_{n=1}^\infty$ also is compatible with $\eta$ if and only if the map $\phi\colon\{ q_n \colon n\in\NN\}\to\QQ$ given by $r_n=\phi(q_n)$ for all $n\in\NN$ is increasing.
\end{lemma}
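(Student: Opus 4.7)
The plan is to interpret $\pi_n$ as the permutation that sorts the first $n$ terms of a compatible sequence into increasing order, and then to build everything by induction on $n$.

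First I would prove the forward direction. Given a one-to-one sequence $(r_n)_{n=1}^\infty$ in $\QQ$, for each $n\in\NN$ let $\sigma_n\in\Pi_n$ be the unique permutation of $\NN[n]$ such that $r_{\sigma_n^{-1}(1)}<r_{\sigma_n^{-1}(2)}<\cdots<r_{\sigma_n^{-1}(n)}$, and set $d_n=\sigma_n(n)\in\NN[n]$. Set $\eta=(d_n)_{n=1}^\infty$. A direct check using the fact that inserting $r_n$ into $\{r_1,\dots,r_{n-1}\}$ at its proper rank $d_n$ shifts up by one those previous entries whose rank was at least $d_n$ shows that $\pi_n$ (defined by the recursion of $\pi[\eta]$) coincides with $\sigma_n$. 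Hence $(r_n)$ is compatible with $\eta$. Uniqueness follows because any compatible $\eta'$ forces $\pi[\eta']=\sigma_n$ by the defining inequalities, and then $d_n'=\pi_n'(n)=d_n$ by \eqref{eq:etafrompi}.

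Next I would verify, by induction on $n$, that $q[\eta]=(q_n)_{n=1}^\infty$ is compatible with $\eta$ and that all its entries lie in $(0,1)$. The base case $n=1$ is immediate. For the inductive step, assume $q_1,\dots,q_{n-1}\in(0,1)$ and that compatibility holds up to level $n-1$, i.e., $\pi_{n-1}$ sorts $q_1,\dots,q_{n-1}$ in increasing order. With $a,b$ as in the definition of $q_n$, either $a\in\NN[n-1]$ and $q_a$ is the largest entry strictly less than the candidate position, or $d_n=1$ and we use the convention $q_a=0$; symmetrically for $b$. Since $q_a<q_b$ by the induction hypothesis (strictly, even with the conventions $q_0=0$ and $q_\infty=1$), the midpoint $q_n=(q_a+q_b)/2$ lies strictly between $q_a$ and $q_b$, and in particular $q_n\in(0,1)$. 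Translating the recursion for $\pi_n$ into an order statement about $q_1,\dots,q_n$ then yields compatibility at level $n$.

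Finally I would settle the essential uniqueness. If $(r_n)$ is compatible with $\eta$, then both $(r_n)$ and $(q_n)$ realize the same order relations among indices at every level, because $r_i<r_j$ and $q_i<q_j$ are each equivalent to $\pi_n(i)<\pi_n(j)$ for $n\ge\max\{i,j\}$. Consequently the assignment $q_n\mapsto r_n$ is an order-preserving map on $\{q_n:n\in\NN\}$, i.e., $\phi$ is increasing. Conversely, if $\phi$ is increasing, then applying $\phi$ to the compatible sequence $(q_n)$ preserves every inequality $q_i<q_j$, so $(r_n)=(\phi(q_n))$ is compatible with $\eta$.

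The step I expect to require the most care is the inductive verification that $q[\eta]$ is compatible with $\eta$, specifically translating between the recursive definition of $\pi_n$ (which reindexes the previously placed labels around the insertion position $d_n$) and the order-theoretic statement about $q_1,\dots,q_n$ inside $[0,1]$; the boundary cases $d_n=1$ and $d_n=n$ force us to use the sentinel values $q_0=0$, $q_\infty=1$, and one must check that these conventions mesh correctly with the recursion for $\pi[\eta]$.
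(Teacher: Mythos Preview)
Your proposal is correct and follows essentially the same approach as the paper's proof: the paper also defines the positioning from a one-to-one sequence via the rank $d_n=|\{j\in\NN[n]:r_j\le r_n\}|$ (which is your $\sigma_n(n)$), invokes \eqref{eq:etafrompi} for uniqueness, appeals to a straightforward induction for the compatibility of $q[\eta]$, and reads off the essential uniqueness directly from the definition of compatibility. Your write-up is simply a more detailed elaboration of the same argument, including the explicit identification of $\pi_n$ with the sorting permutation and the care with the sentinel values $q_0=0$, $q_\infty=1$.
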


\begin{proof}Given a  a one-to-one sequence $\rr=(r_n)_{n=1}^\infty$ in $\QQ$ we define a positioning $\eta[\rr]=(d_n[\rr])_{n=1}^\infty$ by
\[
d_n[\rr]=|\{ j\in\NN[n] \colon r_j\le r_n \}|, \quad n\in\NN.
\]
By equation \eqref{eq:etafrompi}, $\rr$ is compatible with a positioning $\eta$ if and only if $\eta=\eta[\rr]$.
%$\eta=(d_n)_{n=1}^\infty$ be a positioning with $\pi[\eta]=(\pi_n)_{n=1}^\infty$.
%The and $\rr$ is compatible with the positioning $(d_n[\rr])_{n=1}^\infty$.

A straightforward induction arguments yields that $q[\eta]$ is compatible with the positioning $\eta$. Finally, the very definition of compatibility yields that $\rr$ is compatible with $\eta$ if and only if the map $\phi$ is increasing on $\{q_j \colon 1\le j \le n \}$ for all $n\in\NN$.
\end{proof}

%Note that every positioning $\eta$ has a compatible sequence. Indeed, we may recursively construct $q[\eta]=(q_n)_{n=1}^\infty$ by
%\[
%q_n=\frac{q_a+q_b}{2},
%\]
%where, with the convention $q_0=0$ and $q_\infty=1$, $a$ and $b$ in $\NN[n-1]\cup\{0,\infty\}$ are such that $\pi_{n-1}(a)=d_n-1$ and $\pi_{n-1}(b)=d_n$ ($b=\infty$ if $d_n=n$). Conversely, every one-to-one sequence $(q_n)_{n=1}^\infty$ in $\QQ$ is compatible with some positioning $(d_n)_{n=1}^\infty$, namely
%\[
%d_n=|\{j\in \NN \colon j\le n, \, q_j\le q_n\}|.
%\]
%Note also that the sequence in $\QQ$ compatible with a given positioning is essentially unique in the following sense: if $(q_n)_{n=1}^\infty$ and $(r_n)_{n=1}^\infty$ are compatible with $\eta$ then there is an increasing map $\phi\colon\{ q_n \colon n\in\NN\}\to\QQ$ such that $r_n=q_{\phi(n)}$ for every $n\in\NN$.

Now, we say that a vector $y\in \FF^\QQ$ is compatible with a seed $(f, \eta)$ if there is a sequence $(q_n)_{n=1}^\infty$ compatible with $\eta$ such that
\[
y=\sum_{n=1}^\infty a_n \, \ee_{q_n}.
\]
If two vectors $y_1$ and $y_2$ are compatible with the same seed, then $y_2$ is a shift of $y_1$.

We say that $y\in\FF^\QQ$ is compatible with $f\in\FF^\NN$ if it is compatible with the seed $(f,\eta_0)$, where $\eta_0$ is the trivial positioning.

%\begin{definition}We say that a vector $y\in \FF^\QQ$ is compatible with a seed $(f, \eta)$ if there is a sequence $(q_n)_{n=1}^\infty$ compatible with $\eta$ such that
%\[
%y=\sum_{n=1}^\infty a_n \, \ee_{q_n}.
%\]
%We say that $y\in\FF^\QQ$ is compatible with $f\in\FF^\NN$ if it is compatible with the seed $(f,\eta_0)$, where $\eta_0$ is the trivial positioning.
%\end{definition}

%Note that if two vectors $y_1$ and $y_2$ are compatible with the same seed, then $y_2$ is a shift of $y_1$.

\begin{lemma}\label{lem:garlingq1} Let $0<p<\infty$ and $\ww\in\WW$. Suppose that $(f,\eta)$ is a seed for $g(\ww,p)$. Let $f=(a_n)_{n=1}^\infty$. If $(q_n)_{n=1}^\infty$ is compatible with $\eta$ then
\[
E_m[f,\eta]=\left\Vert \sum_{j=m+1}^\infty a_j \, \ee_{q_j}\right\Vert_g
\]
for every $m\in\NN\cup\{0\}$.
\end{lemma}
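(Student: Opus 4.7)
The plan is to reduce everything to a per-$n$ equality
\[
\Vert u_n-u_{m,n}\Vert_g=\biggl\Vert \sum_{j=m+1}^{n} a_j\, \ee_{q_j}\biggr\Vert_g
\]
obtained by exhibiting $\sum_{j=m+1}^{n} a_j\, \ee_{q_j}$ as a shift of $u_n-u_{m,n}\in g(\ww,p)$, and then to pass to the supremum in $n$ on both sides. The isometry of shifts encoded in Lemma~\ref{lem:Ybasicproperties}~\ref{lem:Ybasicproperties:d} will deliver the per-$n$ equality, after which taking limits does the rest.

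For the shift construction I would unwind $u_n-u_{m,n}=\sum_{j=m+1}^{n}a_j\,\ssb_{\pi_n(j)}$, whose support is the finite set $T_n:=\{\pi_n(j):m+1\le j\le n\}\subseteq\NN[n]$. Compatibility of $(q_n)_{n=1}^\infty$ with $\eta$ reads $\pi_n(i)<\pi_n(j)\iff q_i<q_j$ for all $i,j\in\NN[n]$, so the restriction to $T_n$ of the partial assignment $\pi_n(j)\mapsto q_j$ is strictly order-preserving. Using order-density and unboundedness of $\QQ$ I extend this partial map to a strictly increasing $\phi_n\colon\NN\to\QQ$. Then the defining formula for $I_{\phi_n}$ gives
\[
I_{\phi_n}(u_n-u_{m,n})=\sum_{j=m+1}^{n} a_j\, \ee_{\phi_n(\pi_n(j))}=\sum_{j=m+1}^{n} a_j\, \ee_{q_j},
\]
so $\sum_{j=m+1}^{n} a_j\, \ee_{q_j}$ is literally a shift of $u_n-u_{m,n}$ and Lemma~\ref{lem:Ybasicproperties}~\ref{lem:Ybasicproperties:d} yields the per-$n$ equality of norms.

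To finish I would take $n\to\infty$. By Lemma~\ref{lemma:first}~\ref{lemma:first:f} the left-hand sides $(\Vert u_n-u_{m,n}\Vert_g)_{n\ge m}$ are non-decreasing with supremum exactly $E_m[f,\eta]$. On the right, the lattice structure of $g(\QQ,\ww,p)$ makes $\bigl\Vert\sum_{j=m+1}^{n}a_j\,\ee_{q_j}\bigr\Vert_g$ non-decreasing in $n$; moreover, for any finite $A\subseteq\QQ$, choosing $N$ larger than every $j>m$ with $q_j\in A$ forces $\bigl(\sum_{j=m+1}^{\infty}a_j\,\ee_{q_j}\bigr)\chi_A$ to be dominated in modulus by $\sum_{j=m+1}^{N}a_j\,\ee_{q_j}$, so Lemma~\ref{lem:Ybasicproperties}~\ref{lem:Ybasicproperties:b} identifies $\sup_n\bigl\Vert\sum_{j=m+1}^{n}a_j\,\ee_{q_j}\bigr\Vert_g$ with $\bigl\Vert\sum_{j=m+1}^{\infty}a_j\,\ee_{q_j}\bigr\Vert_g$. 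Equating the two suprema concludes.

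The only mildly delicate point is recognising that compatibility of $(q_n)_{n=1}^\infty$ with $\eta$ is \emph{exactly} what is needed to produce the increasing map $\phi_n$ at each stage; once this is observed, the proof is monotone-convergence bookkeeping driven entirely by the shift isometry of Lemma~\ref{lem:Ybasicproperties} and the subsymmetric structure of $g(\ww,p)$.
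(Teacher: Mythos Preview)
Your proof is correct and follows essentially the same route as the paper: establish the per-$n$ equality $\Vert u_n-u_{m,n}\Vert_g=\bigl\Vert \sum_{j=m+1}^{n} a_j\, \ee_{q_j}\bigr\Vert_g$ via the shift isometry of Lemma~\ref{lem:Ybasicproperties}~\ref{lem:Ybasicproperties:d}, then pass to the supremum using Lemma~\ref{lem:Ybasicproperties}~\ref{lem:Ybasicproperties:b}. You have simply unpacked the shift construction (making the role of compatibility explicit) and the supremum argument in more detail than the paper does.
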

\begin{proof}Let $\uu[f,\eta]=(u_n)_{n=1}^\infty$ and $\uu_l[f,\eta]=(u_{m,n})_{m\le n}$. Set also $u_0=0$.
By Lemma~\ref{lem:Ybasicproperties}~\ref{lem:Ybasicproperties:d},
\[
\Vert u_n-u_{m,n}\Vert_g=\left\Vert \sum_{j=m+1}^n a_j \, \ee_{q_j}\right\Vert_g, \quad 0\le m\le n.
\]
We finish the proof by applying Lemma~\ref{lem:Ybasicproperties}~\ref{lem:Ybasicproperties:b}.
\end{proof}

\begin{proposition}\label{prop:Yseeds}Let $0<p<\infty$ and $\ww\in\WW$. Let $y\in\FF^\QQ$ be compatible with a seed $(f,\eta)$. Then:
\begin{enumerate}[label=(\alph*)]
\item\label{prop:Yseeds:a} $E_0[f,\eta]<\infty$ if and only if $y\in g(\QQ,\ww,p)$, and
\item\label{prop:Yseeds:b} $E_\infty[f,\eta]=0$ if and only if $y\in g_0(\QQ,\ww,p)$.
\end{enumerate}
\end{proposition}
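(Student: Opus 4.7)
The plan is to exploit Lemma~\ref{lem:garlingq1}, which identifies $E_m[f,\eta]$ with the $g$-norm of a tail: if $(q_n)_{n=1}^\infty$ is the compatible sequence for $\eta$ used to define $y=\sum_n a_n\,\ee_{q_n}$, then $E_m[f,\eta] = \|\sum_{j>m} a_j\,\ee_{q_j}\|_g$ for every $m$. Part \ref{prop:Yseeds:a} is then immediate by taking $m=0$: we get $E_0[f,\eta]=\|y\|_g$, so finiteness of one is finiteness of the other.

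For the forward direction of \ref{prop:Yseeds:b}, I would consider the partial sums $y_m := \sum_{j=1}^m a_j\,\ee_{q_j}$. These are finitely supported, so lie in $c_{00}(\QQ)$, and by Lemma~\ref{lem:garlingq1} they satisfy $\|y - y_m\|_g = E_m[f,\eta]$. If $E_\infty[f,\eta]=0$, then $y_m \to y$ in norm, so $y$ lies in the closure of $c_{00}(\QQ)$, which is by definition $g_0(\QQ,\ww,p)$.

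The converse is the heart of the argument and is where Lemma~\ref{lem:additionalbasicproperty} must be used. Assuming $y\in g_0(\QQ,\ww,p)$, I would first observe that $|a_n|\to 0$: approximating $y$ by $z\in c_{00}(\QQ)$ with $\|y-z\|_g < \delta$, and noting that for $n$ large $q_n\notin\supp(z)$, we obtain $|a_n|^p \le \|y-z\|_g^p$ because $w_1=1$. The obstacle I anticipate is that $E_m[f,\eta]^p$ is a supremum of sums whose leading weight is $w_1$, not $w_M$, so Lemma~\ref{lem:additionalbasicproperty} — which only controls sums of the form $\sum_i|a_{q_i'}|^p w_{M+i-1}$ — does not apply directly. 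I plan to overcome this by a two-piece split: given $\varepsilon>0$, pick $M$ from Lemma~\ref{lem:additionalbasicproperty} so that every shifted sum is $<\varepsilon^p/2$, and then pick $m$ large enough that $\sup_{n>m}|a_n|^p < \varepsilon^p/(2(M-1))$ (vacuous if $M=1$). For any admissible sum $\sum_{k=1}^N |a_{n_k}|^p w_k$ with $n_k > m$ and $q_{n_1}<\dots<q_{n_N}$, the first $\min(N,M-1)$ terms are bounded by $(M-1)\sup_{n>m}|a_n|^p < \varepsilon^p/2$ using $w_k\le w_1=1$, while the remaining tail, after the reindexing $k'=k-M+1$, matches the hypothesis of Lemma~\ref{lem:additionalbasicproperty} and is $<\varepsilon^p/2$. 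Adding the two bounds gives $E_m[f,\eta]<\varepsilon$ for all sufficiently large $m$, whence $E_\infty[f,\eta]=0$.
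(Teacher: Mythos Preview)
Your argument is correct. For \ref{prop:Yseeds:a} and the forward direction of \ref{prop:Yseeds:b} it coincides with the paper's proof, which simply says ``immediate from Lemma~\ref{lem:garlingq1}.'' For the converse of \ref{prop:Yseeds:b} you take a genuinely different, and longer, route: you invoke Lemma~\ref{lem:additionalbasicproperty} and perform a two-piece split, handling the first $M-1$ terms by the decay of $|a_n|$ and the rest by the shifted-weight estimate. This works, but the paper's intended shortcut avoids Lemma~\ref{lem:additionalbasicproperty} altogether and relies only on the lattice structure of $g(\QQ,\ww,p)$ (Lemma~\ref{lem:Ybasicproperties}\ref{lem:Ybasicproperties:c}): given $\varepsilon>0$, pick $z\in c_{00}(\QQ)$ with $\|y-z\|_g<\varepsilon$, and choose $m$ so large that $q_j\notin\supp(z)$ for all $j>m$; then $\sum_{j>m} a_j\,\ee_{q_j}=S_{\{q_j:j>m\}}(y-z)$ and hence $E_m[f,\eta]\le\|y-z\|_g<\varepsilon$ by $1$-unconditionality. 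Your approach has the virtue of being self-contained at the level of the norm formula, but it proves less with more; the lattice argument is one line and also explains why the paper regards the whole proposition as ``immediate.''
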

\begin{proof}It is immediate from Lemma~\ref{lem:garlingq1}.
\end{proof}

A sequence $(\yy_k)_{k=1}^\infty$ in $\FF^\QQ$ is said to be \emph{generated on $\QQ$ by a seed $(f,\eta)$} if for every $k\in\NN$, $\supp(\yy_k) \prec \supp(\yy_{k+1})$ and $\yy_k$ is compatible with $(f,\eta)$. If $\eta=\eta_0$ is the trivial positioning, we say that $(\yy_k)_{k=1}^\infty$ is generated on $\QQ$ by $f$. By Proposition~\ref{prop:Yseeds}, the sequence $(\yy_k)_{k=1}^\infty$ generated by $(f,\eta)$ belongs to $g(\QQ,\ww,p)$ (resp.\ $g_0(\QQ,\ww,p)$) if and only if $E_0[f,\eta]<\infty$ (resp.\ $E_\infty[f,\eta]=0$). By Proposition~\ref{prop:2}, a sequence $\BB$ generated on $\QQ$ by a vector $f$ belongs to $g(\QQ,\ww,p)$ if and only if $f\in g(\ww,p)$, in which case $\BB$ is contained in $g_0(\QQ,\ww,p)$. Let us construct a precise sequence generated on $\QQ$ by the seed $(f,\eta)$. If $q[\eta]=(q_n)_{n=1}^\infty$ and $f=(a_n)_{n=1}^\infty$ we define
\[
\BB_\QQ[f,\eta]=\left(\sum_{n=1}^\infty a_n \, \ee_{k-1+q_n}\right)_{k=1}^\infty.
\]
If $\eta_0$ is the trivial positioning we denote $\BB_\QQ[f]=\BB_\QQ[f,\eta_0]$.

In the case when $E_0[f,\eta]<\infty$, a sequence generated on $\QQ$ by the seed $(f,\eta)$ is a disjointly supported basic sequence of $g(\QQ,\ww,p)$.
If $\BB=(\xx_k)_{k=1}^\infty$ and and $\BB'=(\yy_k)_{k=1}^\infty$ are generated on $\QQ$ by the seed $(f,\eta)$ there is an increasing map $\phi\colon\cup_{k=1}^\infty\supp(\xx_k) \to \QQ$ such that $U_\phi(\xx_k)=\yy_k$ for every $k\in\NN$. Hence, by Lemma~\ref{lem:Ybasicproperties}~\ref{lem:Ybasicproperties:e}, $\BB$ and $\BB'$ are isometrically equivalent basic sequences when regarded in $g(\QQ,\ww,p)$. Moreover, also by Lemma~\ref{lem:Ybasicproperties}~\ref{lem:Ybasicproperties:e}, they are $1$-subsymmetric.

\begin{proposition}\label{lem:Y5} Let $0<p<\infty$ and $\ww\in\WW$. Suppose that $\BB$ is a basic sequence generated on $\QQ$ by a seed $(f,\eta)$ with
$E_\infty[f,\eta]=0$. Then $\BB$
belongs to the equivalence class of subsymmetric bases of $g(\ww,p)$ generated by $(f,\eta)$.
\end{proposition}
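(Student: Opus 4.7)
The plan is to exhibit a specific $\nu$ for which $\BB_\QQ[f,\eta]\simeq\BB[f,\eta,\nu]$; once this is done, the general case follows because any two basic sequences generated on $\QQ$ by the same seed are isometrically equivalent via Lemma~\ref{lem:Ybasicproperties}~\ref{lem:Ybasicproperties:e}. Writing $f=(a_n)_{n=1}^\infty$ and $q[\eta]=(q_n)_{n=1}^\infty\subset(0,1)$, and exploiting $E_\infty[f,\eta]=0$, I would choose an unbounded $\nu=(n_k)_{k=1}^\infty$ with $\sum_{k=1}^\infty (E_{n_k}[f,\eta])^{\overline p}<\infty$. Three sequences of disjointly supported vectors then enter the picture: $\yy_k=\sum_{n=1}^\infty a_n\,\ee_{k-1+q_n}$ and its truncation $\xx_k=\sum_{n=1}^{n_k} a_n\,\ee_{k-1+q_n}$ in $g(\QQ,\ww,p)$, together with $\zz_k=V_{\tau_{p_k}}(u_{n_k})$ in $g(\ww,p)$ (where $p_k=\sum_{i=1}^{k-1}n_i$), so that $\BB[f,\eta,\nu]=(\zz_k)_{k=1}^\infty$.

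The first auxiliary step is to show $(\yy_k)_{k=1}^\infty\simeq(\xx_k)_{k=1}^\infty$ in $g(\QQ,\ww,p)$ by the principle of small perturbations. Since translating a compatible sequence by a constant preserves compatibility with $\eta$, Lemma~\ref{lem:garlingq1} applies to the sequence $(k-1+q_n)_{n=1}^\infty$ and yields $\Vert\yy_k-\xx_k\Vert_g=E_{n_k}[f,\eta]$; the $\overline p$-summability of these errors makes the perturbation admissible in this $p$-convex setting.

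The decisive step is to establish $(\xx_k)_{k=1}^\infty\simeq_{\mathrm{isom}}(\zz_k)_{k=1}^\infty$, which I would obtain by producing a single strictly increasing map $\phi\colon\NN\to\QQ$ such that $I_\phi(\zz_k)=\xx_k$ for every $k$. Because the blocks $\{p_k+1,\dots,p_k+n_k\}$ partition $\NN$, the rule $\phi(p_k+\pi_{n_k}(j))=k-1+q_j$ defines $\phi$ on all of $\NN$. Checking that $\phi$ is globally increasing is the heart of the matter: within a single block it reduces to the compatibility of $q[\eta]$ with $\eta$ (namely $\pi_{n_k}(j)<\pi_{n_k}(j')$ forces $q_j<q_{j'}$), while across two blocks $k_1<k_2$ it follows from $q_n\in(0,1)$. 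Once this is verified, Lemma~\ref{lem:Ybasicproperties}~\ref{lem:Ybasicproperties:d} guarantees that $I_\phi$ is a linear isometry from the span of $(\zz_k)$ in $g(\ww,p)$ into $g(\QQ,\ww,p)$ sending each $\zz_k$ to $\xx_k$. Chaining $\BB\simeq_{\mathrm{isom}}(\yy_k)\simeq(\xx_k)\simeq_{\mathrm{isom}}(\zz_k)=\BB[f,\eta,\nu]$ then finishes the argument; the main obstacle I anticipate is precisely this combinatorial bookkeeping that ties the recursive construction of $\pi[\eta]$ to the order-theoretic properties of $q[\eta]$, after which everything reduces to applications of the already-proved lemmas.
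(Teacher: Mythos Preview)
Your proposal is correct and follows essentially the same route as the paper: pick $\nu$ with $\sum_k (E_{n_k}[f,\eta])^{\overline p}<\infty$, compare $\BB_\QQ[f,\eta]$ to its finite truncations via the principle of small perturbations (using that the tail norm equals $E_{n_k}[f,\eta]$), and then identify the truncated sequence isometrically with $\BB[f,\eta,\nu]$ by a shift. The only cosmetic difference is that the paper dispatches the isometric equivalence step by a one-line appeal to Lemma~\ref{lem:Ybasicproperties}~\ref{lem:Ybasicproperties:d}, whereas you spell out the global increasing map $\phi\colon\NN\to\QQ$ and verify the within-block/across-block monotonicity explicitly; this extra bookkeeping is exactly what underlies the paper's invocation, so both arguments coincide.
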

\begin{proof}
By Lemma~\ref{SubSymFromSeed}~\ref{SubSymFromSeed:b} and Proposition~\ref{SubSymFromamenableSeed}, it suffices to prove that if $\nu=(n_k)_{k=1}^\infty$ is such that
\[
\sum_{k=1}^\infty (E_{n_k}[f,\eta])^{\overline p}<\infty,
\]
then $ \BB[f,\eta,\nu]\simeq \BB_\QQ[f,\eta]$.

Put $f=(a_j)_{j=1}^\infty$, $q[\eta]=(q_j)_{j=1}^\infty$ and $\BB_\QQ[f,\eta]=(\yy_k)_{k=1}^\infty$. Let $\BB''=(\zz_k)_{k=1}^\infty$ be the basic sequence in $g(\QQ,\ww,p)$ defined by
\[
\zz_k=\sum_{j=1}^{n_k} a_j \, \ee_{k-1+q_j}, \quad k\in\NN,
\]
By Lemma~\ref{lem:Ybasicproperties}~\ref{lem:Ybasicproperties:d}, $\BB[f,\eta,\nu]\simeq\BB''$ isometrically. Using Lemma~\ref{lem:Ybasicproperties}~\ref{lem:Ybasicproperties:e} and Proposition~\ref{prop:Yseeds}~\ref{prop:Yseeds:b}, we obtain
\[
\Vert \yy_k-\zz_k\Vert_g=\left\Vert \sum_{j=1+n_k}^\infty a_j \, \ee_{k-1+q_j}\right\Vert=E_{n_k}[f,\eta].
\]
Then, by the principle of small perturbations, $\BB''\simeq \BB_\QQ[f,\eta]$.
\end{proof}

We are now ready to tackle our dichotomy result.

\begin{theorem}\label{thm:dochotomyGarling} Let $0< p<\infty$ and $\ww\in\WW$. A basic sequence $\BB$ is equivalent to a subsymmetric basic sequence of $g(\ww,p)$ if and only if
\begin{enumerate}[label=(\alph*)]
\item\label{thm:dochotomyGarling:a} Either $\BB$ is equivalent to a basic sequence generated on $\QQ$ by a seed with $E_\infty[f,\eta]=0$, or
\item\label{thm:dochotomyGarling:b} $\BB$ is equivalent to the unit vector system of $\ell_p$.
\end{enumerate}
\end{theorem}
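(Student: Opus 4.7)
My plan is to combine the general dichotomy (Theorem~\ref{theorem:trichotomy:subsym}), the translation between seeds and $\QQ$-indexed basic sequences provided by Proposition~\ref{lem:Y5}, and the structural properties of $g(\ww,p)$ collected in Theorem~\ref{thm:garling1}.

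For the forward direction, let $\BB'$ be a subsymmetric basic sequence of $g(\ww,p)$ equivalent to $\BB$, and apply Theorem~\ref{theorem:trichotomy:subsym} to $\BB'$. In its first alternative, $\BB'$ is equivalent to a subsymmetric basic sequence of $g(\ww,p)$ generated by a non-negative non-increasing seed $(f,\eta)$ with $E_\infty[f,\eta]=0$, and Proposition~\ref{lem:Y5} immediately translates this into equivalence with $\BB_\QQ[f,\eta]$, a basic sequence generated on $\QQ$ by the same seed, yielding case~\ref{thm:dochotomyGarling:a}. In the second alternative, $\BB'$ dominates a semi-normalized uniformly null block basic sequence $\mathcal C$ of $(\ssb_j)$; after rescaling $\mathcal C$ to be normalized (which preserves uniform nullity and, since $\mathcal C$ is $1$-unconditional, alters the domination $\mathcal C\lesssim\BB'$ only by a bounded diagonal operator), and passing to the subsequence $\mathcal D$ provided by Theorem~\ref{thm:garling1}~\ref{thm:garling1:d} with $\EE[\ell_p]\lesssim_C\mathcal D$, one obtains $\EE[\ell_p]\lesssim\mathcal D\lesssim\BB'$, using the subsymmetry of $\BB'$ to dominate the subsequence. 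The opposite domination $\BB'\lesssim_1\EE[\ell_p]$ (for a normalization of $\BB'$, which is semi-normalized by subsymmetry) is exactly Theorem~\ref{thm:garling1}~\ref{thm:garling1:b}. These combine to give $\BB\simeq\BB'\simeq\EE[\ell_p]$, which is case~\ref{thm:dochotomyGarling:b}.

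The backward direction is more direct. Case~\ref{thm:dochotomyGarling:a} follows at once from Proposition~\ref{lem:Y5}: any basic sequence generated on $\QQ$ by a seed $(f,\eta)$ with $E_\infty[f,\eta]=0$ lies in the equivalence class of the subsymmetric basic sequence $\BB[f,\eta,\nu]\subset g(\ww,p)$ whose existence is guaranteed by Proposition~\ref{SubSymFromamenableSeed}. For case~\ref{thm:dochotomyGarling:b}, I will exhibit a concrete subsymmetric basic sequence of $g(\ww,p)$ equivalent to $\EE[\ell_p]$. Take $y_k=\Phi(n_k)^{-1}\sum_{j\in I_k}\ssb_j$ with $(n_k)_{k=1}^\infty$ strictly increasing and $(I_k)_{k=1}^\infty$ consecutive disjoint blocks of $\NN$ of size $n_k$. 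By $1$-subsymmetry of $(\ssb_j)$ one has $\Vert y_k\Vert=1$ and $\sup_j|\ssb_j^*(y_k)|=\Phi(n_k)^{-1}\to 0$ (since $\ww\notin\ell_1$ forces $\Phi(n)\to\infty$), so $(y_k)$ is a normalized uniformly null block basic sequence. Theorem~\ref{thm:garling1}~\ref{thm:garling1:d} extracts a subsequence $(y_{k_i})$ with $\EE[\ell_p]\lesssim_C(y_{k_i})$, and Theorem~\ref{thm:garling1}~\ref{thm:garling1:b} gives $(y_{k_i})\lesssim_1\EE[\ell_p]$, so $(y_{k_i})\simeq\EE[\ell_p]$. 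Since subsymmetry is invariant under equivalence and $\EE[\ell_p]$ is symmetric, $(y_{k_i})$ is a subsymmetric basic sequence of $g(\ww,p)$ equivalent to $\BB$.

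I expect the main obstacle to be the normalization step in the second alternative of the forward direction: one has to carefully track that rescaling $\mathcal C$ to a normalized sequence preserves the uniform nullity condition, and that both the domination chain into $\BB'$ and the $1$-domination by $\EE[\ell_p]$ remain valid when restricted to the subsequence extracted from Theorem~\ref{thm:garling1}~\ref{thm:garling1:d}; beyond this bookkeeping, everything else is a direct invocation of the tools already developed.
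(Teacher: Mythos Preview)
Your proof is correct and follows essentially the same approach as the paper's: invoke Theorem~\ref{theorem:trichotomy:subsym}, translate the seed alternative via Proposition~\ref{lem:Y5}, and handle the uniformly null alternative by sandwiching between the two parts of Theorem~\ref{thm:garling1}. Your treatment is in fact more careful than the paper's in two places: you explicitly address the normalization needed to apply Theorem~\ref{thm:garling1}~\ref{thm:garling1:d} (which is stated for normalized sequences), and you spell out the concrete construction of a subsymmetric basic sequence equivalent to $\EE[\ell_p]$ in the backward direction, whereas the paper simply asserts that Theorem~\ref{thm:garling1} yields its existence.
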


\begin{proof}
Our arguments rely on the dichotomy provided by Theorem~\ref{theorem:trichotomy:subsym}. If $\BB$ is equivalent to a basic sequence generated by a seed $(f,\eta)$ with $E_\infty[f,\eta]=0$, the result follows from Proposition~\ref{lem:Y5}. Assume that there is a uniformly null block basic sequence $\BB'$ of $g(\ww,p)$ such that $\BB'\lesssim \BB$. Then, on one hand, by Theorem~\ref{thm:garling1}~\ref{thm:garling1:d}, passing to a suitable subsequence, $ \EE[\ell_p]\lesssim \BB'$. On the other hand, by Theorem~\ref{thm:garling1}~\ref{thm:garling1:b}, $\BB\lesssim \EE[\ell_p]$. Consequently, $\BB\simeq \EE[\ell_p]$. Finally, we note that Theorem~\ref{thm:garling1} also yields the existence of a basic sequence of $g(\ww,p)$ equivalent to $\EE[\ell_p]$.
\end{proof}

Let us briefly discuss the subsymmetric basic sequence structure of sequence Lorentz spaces. The symmetric basic sequences of $d(\ww,p)$, $p\ge 1$ were successfully studied in \cite{ACL1973}. A careful look at this paper reveals that the techniques developed there allow also to identify the subsymmetric basic sequences of $d(\ww,p)$. We remark that our techniques also apply to these spaces. We omit the straightforward details.
\begin{theorem}Let $\ww\in\WW$ and $0<p<\infty$. Every subsymmetric basic sequence of $d(\ww,p)$ is either equivalent to a basic sequence generated by a vector or equivalent to the unit vector basis of $\ell_p$.
\end{theorem}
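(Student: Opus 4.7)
The plan is to run the argument of Theorem~\ref{thm:dochotomyGarling} in the Lorentz setting, with one key simplification: since the unit vector basis of $d(\ww,p)$ is symmetric (not just subsymmetric), we may invoke the stronger symmetric dichotomy of Theorem~\ref{result:dichotomy:sym} instead of Theorem~\ref{theorem:trichotomy:subsym}. This is precisely what collapses the ``seed'' option to the simpler ``vector'' option, matching the statement. So the first step is to fix a subsymmetric basic sequence $\BB$ in $d(\ww,p)$ and apply Theorem~\ref{result:dichotomy:sym} to conclude that either $\BB$ is equivalent to a subsymmetric basic sequence generated by a vector (and we are done in this case), or $\BB$ is equivalent to a block basic sequence of the unit vector basis of $d(\ww,p)$ that dominates a semi-normalized uniformly null block basic sequence~$\BB'$.

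In the second case, the strategy is to sandwich $\BB$ between copies of $\EE[\ell_p]$. For the upper estimate, I would verify (by the definition of $\Vert\cdot\Vert_{d(\ww,p)}$) that $d(\ww,p)$ is a $p$-convex quasi-Banach lattice with constant $1$; this immediately yields $\BB\lesssim_1 \EE[\ell_p]$ after normalization, just as in Theorem~\ref{thm:garling1}~\ref{thm:garling1:b}. For the lower estimate, I need the Lorentz analog of Theorem~\ref{thm:garling1}~\ref{thm:garling1:d}: every normalized uniformly null block basic sequence with respect to the unit vector system of $d(\ww,p)$ admits, for each $C>1$, a subsequence $\BB''$ with $\EE[\ell_p]\lesssim_C \BB''$. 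This is the classical gliding-hump/disjointness argument in Lorentz spaces: using $\ww\in c_0$ together with disjoint supports, one controls $\Vert\sum a_k \yy_k\Vert_{d(\ww,p)}$ from below by a tail sum of the form $(\sum |a_k|^p w_{N+k})^{1/p}$, which after telescoping and extracting a fast-growing subsequence becomes comparable to $(\sum |a_k|^p)^{1/p}$.

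Putting the pieces together, in the second case one has
\[
\EE[\ell_p]\ \lesssim\ \BB''\ \lesssim\ \BB'\ \lesssim\ \BB\ \lesssim\ \EE[\ell_p],
\]
so $\BB\simeq \EE[\ell_p]$. Finally, one remarks that $\EE[\ell_p]$ itself is realized as a basic sequence in $d(\ww,p)$ (it arises from any fast-growing disjoint block basic sequence of constant-height blocks), so this option is non-vacuous.

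The only genuine technical step is the Lorentz version of Theorem~\ref{thm:garling1}~\ref{thm:garling1:d}; everything else is a mechanical transcription of the proof of Theorem~\ref{thm:dochotomyGarling} with Theorem~\ref{result:dichotomy:sym} substituted for Theorem~\ref{theorem:trichotomy:subsym} and with Lemma~\ref{lem:4} absorbing the ``seed$\Rightarrow$vector'' reduction. As the authors note, this extraction is implicit in \cite{ACL1973}, so the main work is to check that the gliding-hump estimates for $d(\ww,p)$ go through verbatim in the quasi-Banach range $0<p<1$ using the $p$-convexity constant $1$ in place of the triangle inequality.
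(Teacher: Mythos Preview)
Your approach is correct and is precisely the one the paper intends (the paper omits the details, saying only that ``our techniques also apply to these spaces''): apply Theorem~\ref{result:dichotomy:sym}, and in case~\ref{result:dichotomy:sym:b} sandwich $\BB$ between copies of $\EE[\ell_p]$ using $p$-convexity of $d(\ww,p)$ and the Lorentz analogue of Theorem~\ref{thm:garling1}~\ref{thm:garling1:d} from \cite{ACL1973}. One small correction to your displayed chain: the step $\BB''\lesssim\BB'$ is not automatic, since a subsequence of a basic sequence need not be dominated by the original; the correct route is to observe that $\BB'\lesssim\BB$ restricts to the subsequence, giving $\BB''\lesssim(\xx_{n_k})_{k}\simeq\BB$ by the subsymmetry of $\BB$---which is exactly the implicit step in the paper's own proof of Theorem~\ref{thm:dochotomyGarling}.
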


\begin{theorem}\label{thm:symvssubsymLorentz} Let $0<p<\infty$ and $\ww\in\WW$. Every subsymmetric basic sequence of $d(\ww,p)$ is symmetric.
\end{theorem}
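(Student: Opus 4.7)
The plan is to combine the dichotomy stated just above (the analogue of Theorem~\ref{thm:dochotomyGarling} for Lorentz spaces) with the fact that the unit vector basis of $d(\ww,p)$ is \emph{symmetric}, which lets us invoke Theorem~\ref{result:dichotomy:sym} rather than the weaker subsymmetric dichotomy Theorem~\ref{theorem:trichotomy:subsym}. Since symmetry is preserved under equivalence of basic sequences (if $\BB_1\simeq\BB_2$ and $\BB_2$ is symmetric then every permutation of $\BB_1$ is equivalent to $\BB_1$), it will suffice to exhibit, in each case of the dichotomy, one symmetric representative.

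Let $\BB$ be a subsymmetric basic sequence of $d(\ww,p)$. By the preceding classification theorem, either (i) $\BB$ is equivalent to a basic sequence generated by a vector in $d(\ww,p)$, or (ii) $\BB$ is equivalent to the unit vector system of $\ell_p$. Case (ii) is immediate: the canonical basis of $\ell_p$ is symmetric, and so is any basic sequence equivalent to it.

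For case (i), I would appeal to Theorem~\ref{result:dichotomy:sym} applied to the symmetric basis $\EE[d(\ww,p)]$. Its proof (together with Lemma~\ref{lem:4}) shows that any subsymmetric basic sequence of a space with a symmetric basis which is equivalent to one generated by a seed $(f,\eta)$ with $E_\infty[f,\eta]=0$ is in fact symmetric; one replaces the seed by a non-increasing non-negative rearrangement $f^*$ and observes that, by the symmetry of the norm, for every permutation $\pi\in\Pi$ the unbounded sequence $\nu\circ\pi$ produces a subsymmetric basic sequence $\BB[f^*,\eta_0,\nu\circ\pi]$ equivalent to both $\BB[f^*,\eta_0,\nu]$ and to the $\pi$-permutation of it, whence Lemma~\ref{lemma:sym:second} gives symmetry. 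Since in case (i) the basic sequence is equivalent to one generated by a vector (and hence by a seed with trivial positioning, with $E_\infty=0$ by Proposition~\ref{prop:2}), this argument applies verbatim, and $\BB$ is symmetric.

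There is essentially no obstacle beyond citing the right earlier results: the real content was already packaged into Theorem~\ref{result:dichotomy:sym} and into the classification theorem for $d(\ww,p)$ stated just above. The only thing to double-check is that the hypothesis ``$(\ssb_j)_{j=1}^\infty$ is symmetric'' used in Theorem~\ref{result:dichotomy:sym} really does hold for the canonical basis of $d(\ww,p)$, which is immediate from the definition of $\|\cdot\|_{d(\ww,p)}$ via the non-increasing rearrangement.
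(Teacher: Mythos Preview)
Your proposal is correct and follows the approach the paper intends (the paper itself omits the proof as ``straightforward details''). One simplification: you do not need to reproduce the inner workings of Theorem~\ref{result:dichotomy:sym} in case (i); its clause~\ref{result:dichotomy:sym:a} already \emph{states} that a subsymmetric basic sequence equivalent to one generated by a vector is symmetric, so you can cite that conclusion directly rather than re-running the argument with $f^*$, $\nu\circ\pi$, and Lemma~\ref{lemma:sym:second}. In fact, the shortest route is to apply Theorem~\ref{result:dichotomy:sym} to $d(\ww,p)$ from the outset: case~\ref{result:dichotomy:sym:a} gives symmetry outright, and case~\ref{result:dichotomy:sym:b} combined with the classical fact that uniformly null block sequences in $d(\ww,p)$ contain $\ell_p$-subsequences (plus $p$-convexity) forces $\BB\simeq\EE[\ell_p]$. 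Your detour through the preceding Lorentz dichotomy is harmless but slightly redundant, since that theorem is itself derived from the same ingredients.
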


\begin{remark} To contextualize Theorem~\ref{thm:symvssubsymLorentz}, let us recall there are Banach spaces with a symmetric basis containing subsymmetric basic sequences which are not symmetric. In fact, Pe{\l}czy{\'n}ski's space with a universal unconditional basis (see, e.g., \cite{AK2016}*{Sect.\ 15.3}) has a symmetric basis and infinitely many subsymmetric basic sequences that are not symmetric.
\end{remark}

\subsection{Uniqueness of symmetric basic sequence}\label{GarlingSymSection}
In this section we deal with the symmetric basic sequence structure of Garling spaces. Let us bring forward the result which will allow us to tell apart subsymmetric basic sequences that are symmetric from those that are not.
\begin{proposition}\label{prop:notsym}Let $0<p<\infty$ and $\ww\in\WW$. Let $\BB$ be a basic sequence generated on $\QQ$ by a seed with $E_\infty[f,\eta]=0$. Then $\BB$ is not symmetric.
\end{proposition}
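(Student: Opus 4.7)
The plan is to argue by contradiction: assume $\BB=(\yy_k)$ is $C$-symmetric and deduce that the canonical unit vector basis of $g(\ww,p)$ would itself be symmetric, contradicting a known structural feature of Garling sequence spaces (cf.\ \cite{AAW2018, AALW2018}, which establish that $g(\ww,p)$ has no symmetric basis). The mechanism is to show that, up to constants depending only on the seed, the norm $\|\sum_k c_k\,\yy_k\|_g$ is equivalent to $\|(c_k)\|_{g(\ww,p)}$, so that symmetry of $\BB$ transfers directly to symmetry of the basis $(\ssb_j)$ of $g(\ww,p)$.

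\textbf{Lower bound via ``leftmost positions''.} Assume without loss of generality that $f=(a_n)_{n=1}^{\infty}$ is non-negative and non-increasing with $a_1>0$. Let $\sigma$ be the bijection that arranges the indices of $\supp(\yy_1)$ in increasing $\QQ$-order (so that $q_{\sigma(1)}<q_{\sigma(2)}<\dotsb$) and set $a^{*}:=a_{\sigma(1)}>0$. Because $\supp(\yy_k)\prec\supp(\yy_{k+1})$, the positions $(q_{\sigma(1)}^{(k)})_{k=1}^{\infty}$ form an increasing sequence in $\QQ$, each carrying value $a^{*}$. Applying the subsymmetry of $g(\QQ,\ww,p)$ (Lemma~\ref{lem:Ybasicproperties}\ref{lem:Ybasicproperties:e}) to the isometric shift of these positions onto $\NN$ gives
\[
\Bigl\|\sum_k c_k\,\yy_k\Bigr\|_g \;\ge\; a^{*}\,\|(c_k)\|_{g(\ww,p)}
\]
for every $(c_k)\in c_{00}$.

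\textbf{Matching upper bound.} The vector $\sum_k c_k\,\yy_k=\sum_{k,n} c_k a_n\,\ee_{q_n^{(k)}}$ is, again by Lemma~\ref{lem:Ybasicproperties}\ref{lem:Ybasicproperties:e}, isometric to an ``interleaved'' vector in $g(\ww,p)$ whose $k$-th block of $|\supp(f)|$ coordinates equals $c_k\,(a_{\sigma(j)})_j$. Splitting any admissible selection into its contributions from the leftmost slot of each block (controlled by $\|(c_k)\|_{g(\ww,p)}$ via the identification above) and the ``deeper'' within-block slots (controlled by $p$-convexity and the fact that $\|\yy_k\|_g=\alpha$ is finite), one obtains a complementary bound
\[
\Bigl\|\sum_k c_k\,\yy_k\Bigr\|_g \;\le\; K\,\|(c_k)\|_{g(\ww,p)},
\]
for a constant $K=K(f,\eta)$ depending only on the seed. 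Together with the previous inequality this yields $\|\sum_k c_k\,\yy_k\|_g\simeq\|(c_k)\|_{g(\ww,p)}$.

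\textbf{Extracting the contradiction.} Using $\sum_k c_k\,\yy_{\pi(k)} = \sum_j c_{\pi^{-1}(j)}\,\yy_j$, if $\BB$ were $C$-symmetric, the equivalence above would give
\[
\|(c_{\pi^{-1}(k)})\|_{g(\ww,p)} \;\le\; \frac{CK}{a^{*}}\,\|(c_k)\|_{g(\ww,p)}
\]
for every permutation $\pi$ of $\NN$ and every $(c_k)\in c_{00}$. This is precisely the assertion that the unit vector basis $(\ssb_j)$ of $g(\ww,p)$ is symmetric, which is known to fail for $\ww\in\WW$. The main obstacle in this plan is the upper bound in the second step: establishing that the interleaved vector has $g(\QQ,\ww,p)$-norm controlled by $\|(c_k)\|_{g(\ww,p)}$ up to a purely seed-dependent constant requires a delicate balancing of depth (within-block positions, contributing larger $a_n$) against breadth (across-block positions, contributing distinct weights $w_j$).
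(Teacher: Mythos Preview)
Your argument has a genuine gap in the second step: the upper bound
\[
\Bigl\|\sum_k c_k\,\yy_k\Bigr\|_g \;\le\; K\,\|(c_k)\|_{g(\ww,p)}
\]
is in general \emph{false}. If it held, then every basic sequence generated on $\QQ$ by a seed with $E_\infty[f,\eta]=0$ would be equivalent to the unit vector system $\EE[g(\ww,p)]$. But the paper itself shows (Theorem~\ref{thm:infinitessymbases}) that there is a continuum of pairwise inequivalent subsymmetric basic sequences of this form; in particular their fundamental functions $\Phi[h_b]$ are pairwise inequivalent, so they cannot all be equivalent to $\EE[g(\ww,p)]$. Your heuristic of splitting an admissible selection into ``leftmost slot'' versus ``deeper slots'' breaks down precisely because a selection may take many coordinates from a single block $\yy_k$: those deep-within-block contributions can interact with the weights $w_i$ in a way that genuinely enlarges the norm beyond a constant times $\|(c_k)\|_{g(\ww,p)}$. (This is exactly the phenomenon exploited in Lemma~\ref{lem:steve2} and in the construction of the $u_\tau$ in the proof of Theorem~\ref{thm:infinitessymbases}.) The special case in which your equivalence \emph{does} hold is when $f$ is eventually null (Proposition~\ref{Prop:EventuallyNull}), and there the conclusion is indeed immediate from the known non-symmetry of $\EE[g(\ww,p)]$; for general seeds a different argument is needed.

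The paper's route avoids any comparison with $\EE[g(\ww,p)]$. Instead, using the averages $z_n=\Phi[f,\eta](n)^{-1}\sum_{k=1}^n \yy_k$ together with the asymptotic Lemmas~\ref{lem:steve1} and~\ref{lem:steve2}, it builds (Proposition~\ref{prop:nonsymmetric}) tuples $(z'_j)_{j=1}^k$ and $(z''_j)_{j=1}^k$ that are permutations of one another yet satisfy $\|\sum z'_j\|_g^p>k-\varepsilon$ while $\|\sum z''_j\|_g<1+\varepsilon$. Were $\BB$ symmetric, these two norms would be uniformly comparable, which is impossible as $k\to\infty$. This approach works uniformly for every seed and, as a bonus, yields the finite representability of $\ell_\infty$ (Proposition~\ref{cor:Ysubsymmetric}).
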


We emphasize that Proposition~\ref{prop:notsym} generalizes the main result of \cite{AALW2018}, where it is proved that the unit vector system of $g(\ww,p)$ is not a symmetric basis. However, the techniques we will use here are closer to those from \cite{AADK2019}.

Let $\tau\colon\QQ\to\QQ$ be the translation map given by $\tau(q)=1+q$. By Lemma~\ref{lem:Ybasicproperties}~\ref{lem:Ybasicproperties:e}, $U_\tau$ is an isometry on $g(\QQ,\ww,p)$.

\begin{lemma}\label{lem:steve1}Let $0<p<\infty$ and $\ww\in\WW$. Let $x \in g_0(\QQ,\ww,p)$ supported on $[0,\infty)$. Suppose that $(z_n)_{n=1}^\infty$ in $\FF^\QQ$ satisfies $\Vert z_n\Vert_g =1$, $\lim_n \Vert z_n\Vert_\infty=0$ and $\supp(z_n)\subseteq (-\infty,n)$. Then
\[
\lim_{n \to \infty} \left\| z_n +U_\tau^n(x) \right\|_g = \max\{ 1, \|x\|_g\}.
\]
\end{lemma}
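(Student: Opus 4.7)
The proof breaks into a lower bound and an upper bound. The lower bound is easy: since $x$ is supported on $[0,\infty)$, the vector $U_\tau^n(x)$ is supported on $[n,\infty)$, which is disjoint from $\supp(z_n)\subseteq(-\infty,n)$. Hence $|z_n+U_\tau^n(x)|\geq |z_n|$ and $|z_n+U_\tau^n(x)|\geq |U_\tau^n(x)|$ pointwise on $\QQ$, so the lattice property of $g(\QQ,\ww,p)$ together with Lemma~\ref{lem:Ybasicproperties}\ref{lem:Ybasicproperties:d} gives $\|z_n+U_\tau^n(x)\|_g\geq\max\{\|z_n\|_g,\|x\|_g\}=\max\{1,\|x\|_g\}$.

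The bulk of the work is the upper bound. Fix $\varepsilon>0$ and invoke Lemma~\ref{lem:additionalbasicproperty} applied to $x\in g_0(\QQ,\ww,p)$ to obtain $M\in\NN$ with
\[
\sum_{j=1}^{L}|x(s_j)|^p w_{M-1+j}<\varepsilon^p
\]
for every $L\in\NN$ and every increasing sequence $(s_j)_{j=1}^L$ in $\QQ$. Using Lemma~\ref{lem:Ybasicproperties}\ref{lem:Ybasicproperties:a}, the computation of $\|z_n+U_\tau^n(x)\|_g^p$ amounts to taking the supremum of
\[
\sum_{i=1}^N |(z_n+U_\tau^n(x))(q_i)|^p w_i
\]
over increasing tuples $q_1<\cdots<q_N$ supported on the (disjoint) union of supports. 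Any such tuple splits at a unique index $k\in\{0,1,\dots,N\}$, with $q_1,\dots,q_k\in\supp(z_n)\subseteq(-\infty,n)$ and $q_{k+1},\dots,q_N\in\supp(U_\tau^n(x))\subseteq[n,\infty)$. Writing $q_{k+j}'=q_{k+j}-n$ for $j=1,\dots,N-k$, the sum becomes
\[
\sum_{i=1}^k|z_n(q_i)|^p w_i+\sum_{j=1}^{N-k}|x(q_{k+j}')|^p w_{k+j}.
\]

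I would then distinguish two cases. If $k\geq M-1$, then $w_{k+j}\leq w_{M-1+j}$ by monotonicity of $\ww$, so the choice of $M$ forces the second sum to be less than $\varepsilon^p$; and the first sum is bounded by $\|z_n\|_g^p=1$, giving a total $\leq 1+\varepsilon^p$. If instead $k<M-1$, then the first sum is $\leq k\,\|z_n\|_\infty^p\leq (M-2)\|z_n\|_\infty^p$, which is $\leq\varepsilon^p$ for all $n$ sufficiently large thanks to the hypothesis $\|z_n\|_\infty\to 0$; and the second sum is $\leq\|x\|_g^p$ because $w_{k+j}\leq w_j$, giving a total $\leq\|x\|_g^p+\varepsilon^p$. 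In either case the bound is $\max\{1,\|x\|_g^p\}+\varepsilon^p$, uniformly in the tuple.

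Taking the supremum over tuples and then the limit in $n$ yields $\limsup_n\|z_n+U_\tau^n(x)\|_g^p\leq\max\{1,\|x\|_g^p\}+\varepsilon^p$, and letting $\varepsilon\downarrow 0$ delivers the matching upper bound $\limsup_n\|z_n+U_\tau^n(x)\|_g\leq\max\{1,\|x\|_g\}$. Combined with the lower bound, this proves the claim. The main subtlety is the simultaneous use of the two hypotheses on $z_n$: the normalization $\|z_n\|_g=1$ supplies the ``$1$'' in the maximum when many weights are spent on $z_n$, while the vanishing $\|z_n\|_\infty\to 0$ kills the contribution of $z_n$ when few weights are spent on it (which is precisely when the shifted $x$-part would otherwise use the leading weights). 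The transition between these regimes, calibrated via $M$, is exactly what prevents the naive $p$-convexity bound $(1+\|x\|_g^p)^{1/p}$ from spoiling the tightness of the estimate.
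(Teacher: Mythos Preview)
Your proof is correct and follows essentially the same approach as the paper's: both split an admissible tuple at the boundary between $\supp(z_n)$ and $\supp(U_\tau^n(x))$, then distinguish whether the number of terms falling in $\supp(z_n)$ is below or above the threshold supplied by Lemma~\ref{lem:additionalbasicproperty}, using $\|z_n\|_\infty\to 0$ in the former case and $\|z_n\|_g=1$ in the latter. The only cosmetic differences are that you work with $p$-th powers throughout (which is tidy for $0<p<1$) and bound the short-$z_n$ contribution by $(M-2)\|z_n\|_\infty^p$ via $w_i\le 1$, whereas the paper bounds it by $\|z_n\|_\infty^p\sum_{i=1}^L w_i$; also, for the lower bound the isometry $\|U_\tau^n(x)\|_g=\|x\|_g$ is part~\ref{lem:Ybasicproperties:e} of Lemma~\ref{lem:Ybasicproperties}, not~\ref{lem:Ybasicproperties:d}.
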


\begin{proof}Set $x=(a_q)_{q\in\QQ}$ and let $\varepsilon>0$. By Lemma~\ref{lem:additionalbasicproperty} we can choose $L \ge 1$ such that
\[
\alpha := \sup\left\{ \sum_{i=1}^N |a_{q_i}|^pw_{L+i} \colon q_1<\cdots <q_i<\cdots <q_N, \, N\in\NN \right\} \le \varepsilon.
\]
Now choose $M \ge 1$ such that
\[
\Vert z_n\Vert_\infty \left(\sum_{i=1}^L w_i\right)^{1/p} \le \varepsilon, \quad \text{whenever}\, n\ge M.
\]

Let $n \ge M$. In order to estimate $A_n:=\left\| z_n +U_\tau^n(x) \right\|_g$ we pick an increasing sequence $(q_i)_{i=1}^N$ in $\QQ$. Let $j$ be the largest integer $i$ such that $q_i<n$.
Considering separately the cases when $j\le L$ and $j>L$, we obtain
\begin{align*}
A_n
&\le \max\left\{ \|U_\tau^n(x)\|_g + \Vert z_n\Vert_\infty \left(\sum_{i=1}^L w_i\right)^{1/p},
\alpha + \Vert z_n \Vert_g \right\} \\
&\le\max\{ \|x\|_g + \varepsilon, \alpha+1\}\\
&\le \varepsilon+\max\{ \|x\|_g, 1\}.
\end{align*}
Since the unit vector system of $g(\QQ,\ww,p)$ is $1$-unconditional, we have $\max\{ \|x\|_g, 1\} \le A_n$, so we are done.
\end{proof}

\begin{lemma}\label{lem:steve2} Let $0<p<\infty$ and $\ww\in\WW$. For $m\in\NN$ let $x \in g(\QQ,\ww,p)$ be supported on $(-\infty,m]$. Suppose that $(z_n)_{n=1}^\infty$ in $\FF^\QQ$ satisfies  $\lim_n \Vert z_n\Vert_\infty=0$, and $\Vert z_n\Vert_g = 1$ and $\supp(z_n)\subseteq [0,\infty)$ for every $n\in\NN$. Then
\[
\lim_{n \to \infty} \left\| x + U_\tau^m(z_n) \right\|_g =\left(1 + \|x\|_g^p\right)^{1/p}.
\]
\end{lemma}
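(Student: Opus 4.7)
The plan is to prove the matching upper and lower bounds for $A_n := \|x + U_\tau^m(z_n)\|_g$, exploiting that the overlap of $\supp(x) \subseteq (-\infty,m]$ and $\supp(U_\tau^m(z_n)) \subseteq [m,\infty)$ is at most the single point $m$. My first move is to split $z_n = z_n(0)\,\ee_0 + z_n'$ with $\supp(z_n') \subseteq (0,\infty)\cap\QQ$, so that
\[
x + U_\tau^m(z_n) = (x + z_n(0)\,\ee_m) + U_\tau^m(z_n'),
\]
where the two summands have disjoint supports in $(-\infty,m]$ and $(m,\infty)$. The hypothesis $\|z_n\|_\infty \to 0$ yields $|z_n(0)| \to 0$, so $\|x + z_n(0)\,\ee_m\|_g \to \|x\|_g$ and $\|z_n'\|_g \to 1$.

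For the upper bound I apply the $p$-convexity of $g(\QQ,\ww,p)$ with constant one (Lemma~\ref{lem:Ybasicproperties}~\ref{lem:Ybasicproperties:c}) to the two disjointly supported pieces to get
\[
A_n^p \le \|x + z_n(0)\,\ee_m\|_g^p + \|U_\tau^m(z_n')\|_g^p \le \|x + z_n(0)\,\ee_m\|_g^p + 1,
\]
and then pass to the limit to conclude $\limsup_n A_n^p \le \|x\|_g^p + 1$.

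For the lower bound, fix $\varepsilon > 0$. Using Lemma~\ref{lem:Ybasicproperties}~\ref{lem:Ybasicproperties:b} I pick $q_1 < \cdots < q_{j_0}$ in $\supp(x)$ (so $q_{j_0} \le m$) with $\sum_{i=1}^{j_0} |x(q_i)|^p w_i > \|x\|_g^p - \varepsilon$. For each large $n$, since $\|z_n\|_g = 1$, Lemma~\ref{lem:Ybasicproperties}~\ref{lem:Ybasicproperties:a} provides $0 < r_1^{(n)} < \cdots < r_{M_n}^{(n)}$ in $\QQ$ with $\sum_{\ell=1}^{M_n} |z_n(r_\ell^{(n)})|^p w_\ell > 1 - \varepsilon$ (discarding a possible contribution from $r = 0$ costs at most $|z_n(0)|^p \to 0$). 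The concatenation $q_1 < \cdots < q_{j_0} \le m < m + r_1^{(n)} < \cdots < m + r_{M_n}^{(n)}$ is admissible in the supremum defining $A_n^p$, so
\[
A_n^p \ge \sum_{i=1}^{j_0} |(x + U_\tau^m z_n)(q_i)|^p w_i + \sum_{\ell=1}^{M_n} |z_n(r_\ell^{(n)})|^p w_{j_0 + \ell}.
\]
The first block differs from $\sum_{i=1}^{j_0}|x(q_i)|^p w_i$ only in the possible replacement of $|x(m)|$ by $|x(m) + z_n(0)|$ when $q_{j_0} = m$, a correction that vanishes as $n \to \infty$. The second block equals $\sum_\ell |z_n(r_\ell^{(n)})|^p w_\ell \,-\, \sum_\ell |z_n(r_\ell^{(n)})|^p (w_\ell - w_{j_0+\ell})$, and the subtracted term is at most $\|z_n\|_\infty^p \sum_{j=1}^{j_0} w_j$ thanks to equation~\eqref{eq:weight}. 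Since $j_0$ is fixed and $\|z_n\|_\infty \to 0$, this vanishes as $n \to \infty$. Combining yields $\liminf_n A_n^p \ge \|x\|_g^p + 1 - O(\varepsilon)$, and letting $\varepsilon \to 0$ closes the gap.

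The main obstacle is the shift in weights occurring in the lower bound: the concatenated test sequence scores the $z_n$-block against the shifted weights $w_{j_0+\ell}$ rather than the optimal $w_\ell$. The two ingredients that pay for this loss are the uniform-null hypothesis $\|z_n\|_\infty \to 0$ and the telescoping identity~\eqref{eq:weight}, which bounds $\sum_\ell (w_\ell - w_{j_0+\ell})$ by a quantity depending only on the fixed cutoff $j_0$. The overlap at the single point $m$ is a comparatively minor annoyance, dealt with by peeling off $z_n(0)\,\ee_0$, whose contribution vanishes in the limit.
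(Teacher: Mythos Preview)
Your proof is correct and follows essentially the same route as the paper's: the upper bound via $p$-convexity (Lemma~\ref{lem:Ybasicproperties}\ref{lem:Ybasicproperties:c}) applied to the two blocks, and the lower bound via concatenating a near-optimal test sequence for $x$ with one for $z_n$, paying for the weight shift by the telescoping identity~\eqref{eq:weight} together with $\|z_n\|_\infty\to 0$. The only difference is cosmetic: you explicitly peel off the coefficient $z_n(0)\,\ee_0$ to force strict disjointness of the two blocks and a strictly increasing concatenation, whereas the paper simply asserts that $(q_1,\dots,q_L,m+r_1,\dots,m+r_M)$ is increasing and applies $p$-convexity directly; your version is the more careful of the two on this point, but the substance is identical.
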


\begin{proof} Set $x=(a_q)_{q\in\QQ}$ and let $\varepsilon>0$. By Lemma~\ref{lem:Ybasicproperties}~\ref{lem:Ybasicproperties:a}, there is $L\in\NN$ and an increasing $L$-tuple $(q_i)_{i=1}^L$ in $\QQ\cap(-\infty,m]$ such that
\[
\sum_{i=1}^L |a_{q_i}|^p w_i > \|x\|_g^p- \frac{\varepsilon}{3}.
\]
Now choose $N\in\NN$ such that, for every $n\ge N$,
\[
\Vert z_n\Vert_\infty^p \sum_{i=1}^L w_i < \frac{\varepsilon}{3}.
\]
Let $n\ge N$ and set $z_n=(c_q)_{q\in\QQ}$. Use Lemma~\ref{lem:Ybasicproperties}~\ref{lem:Ybasicproperties:a} to pick an increasing sequence $(r_i)_{i=1}^M$ in $[0,\infty)\cap \QQ$ such that
\[
\sum_{i=1}^M |c_{r_i}|^p w_i \ge \|z_n\|_g^p- \frac{\varepsilon}{3}=1-\frac{\varepsilon}{3}.
\]
Since the sequence $(q_1,\dots,q_L, m+r_1, \dots , m+r_M)$ is increasing, \eqref{eq:weight} yields
\begin{align*}
\Vert x + U_\tau^m(z_n)\Vert_g^p&\ge \|x\|_g^p- \frac{\varepsilon}{3} + 1-\frac{\varepsilon}{3}-\sum_{i=1}^N |c_{r_i}|^p (w_i-w_{i+L})\\
&\ge \|x\|_g^p- \frac{\varepsilon}{3} + 1-\frac{\varepsilon}{3} -\Vert z_n\Vert_\infty^p \sum_{i=1}^L w_i\\
&\ge \|x\|_g^p + 1-\varepsilon.
\end{align*}
Since $\Vert x + U_\tau^m(z_n)\Vert_g^p \le \|x\|_g^p + 1$ by Lemma~\ref{lem:Ybasicproperties}~\ref{lem:Ybasicproperties:c}, we are done.
\end{proof}

Given a seed $(f,\eta)$ with $E_\infty[f,\eta]=0$ and a sequence $\BB$ generated on $\QQ$ by $(f,\eta)$, we denote by $\Phi[f,\eta]$ the fundamental function of the $1$-subsymmetric basic sequence $\BB$ of $g_0(\QQ,\ww,p)$. If $\BB$ is generated by a vector $f\in g(\ww,p)$ we denote by $\Phi[f]$ the fundamental function of $\BB$. Notice that, if $C=\Vert f \Vert_\infty^{-1}$, by $1$-unconditionality and Lemma~\ref{lem:Ybasicproperties}~\ref{lem:Ybasicproperties:d},
$
\EE[g(\ww,p)] \lesssim_C \BB.
$
Hence,
\[
\left(\sum_{j=1}^nw_j\right)^{1/p} \le C \Phi[f,\eta](n), \quad n\in\NN,
\]
so that $\lim_n \Phi[f,\eta](n)=\infty$.

\begin{proposition}\label{prop:nonsymmetric}
Let $0<p<\infty$ and $\ww\in\WW$. Suppose $(\yy_k)_{k=1}^\infty$ is a basic sequence generated by a seed $(f,\eta)$ with $E_\infty[f,\eta]=0$. Put
\[
z_n=\frac{1}{\Phi[f,\eta](n)}\sum_{k=1}^n \yy_k, \quad n\in\NN.
\]
Then, for every $\varepsilon>0$ there exists $(n_j)_{j=1}^\infty$ such that for each $k\in\NN$ we have:
\begin{enumerate}[label=(\alph*)]
\item if $(z'_j)_{j=1}^k$ is a shift of $(z_{n_j})_{j=1}^k$ with $z'_1 \prec \cdots \prec z'_j\prec\cdots \prec z_k'$ then $\Vert \sum_{j=1}^k z_j' \Vert_g^p > k- \varepsilon$,

\item if $(z''_j)_{j=1}^k$ is a shift of $(z_{n_j})_{j=1}^k$ with $z''_k \prec \cdots \prec z''_j\prec\cdots \prec z_1''$ then $\Vert \sum_{j=1}^k z_j'' \Vert_g < 1+ \varepsilon$.
\end{enumerate}
Moreover, each $n_k$ can be chosen to be larger than a given function of $(n_1, \dots, n_{k-1})$.
\end{proposition}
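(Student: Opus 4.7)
The plan is to construct $(n_j)_{j=1}^\infty$ by induction on $k$, choosing $n_k$ at each stage large enough so that Lemmas \ref{lem:steve1} and \ref{lem:steve2} deliver bounds whose errors form a summable sequence bounded by $\varepsilon$. First I would verify that $(z_n)_{n=1}^\infty$ is a uniformly null sequence of unit vectors in $g_0(\QQ,\ww,p)$: since $\BB = (\yy_k)$ is $1$-subsymmetric, $\Phi[f,\eta](n) = \|\sum_{k=1}^n \yy_k\|_g$, which gives $\|z_n\|_g = 1$; and since $\|\yy_k\|_\infty \le \|f\|_\infty$, one has $\|z_n\|_\infty \le \|f\|_\infty/\Phi[f,\eta](n) \to 0$. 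Each $z_n$ has bounded support in $\QQ$. Next I would use Lemma \ref{lem:Ybasicproperties}\ref{lem:Ybasicproperties:e} to argue that $\|\sum_{j=1}^k z_j'\|_g$ depends only on which $z_{n_j}$ appear and on the relative order of their supports, not on the particular shifts: given any two shift-configurations of $(z_{n_1},\dots,z_{n_k})$ with the same support order, the piecewise-defined map between their combined supports is order-preserving, hence extends to an isometry. Thus it suffices to estimate the norm for one canonical configuration per stage, for instance with $z_{n_j}$ placed in consecutive intervals of $\QQ$ in the required order.

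Fix $\delta_k>0$ with $\sum_k \delta_k < \varepsilon$. At stage $k$ I maintain the inductive bounds $\|\sum_{j=1}^{k-1} z_j'\|_g^p > (k-1) - \sum_{i<k}\delta_i$ and $\|\sum_{j=1}^{k-1} z_j''\|_g < 1 + \sum_{i<k}\delta_i$ for the canonical configurations. Setting $x = \sum_{j=1}^{k-1} z_j'$ (supported in $(-\infty,m]$ for some $m$), Lemma \ref{lem:steve2} yields $\lim_{n\to\infty} \|x + U_\tau^m(z_n)\|_g^p = 1 + \|x\|_g^p$, so choosing $n_k$ sufficiently large gives $\|\sum_{j=1}^k z_j'\|_g^p > \|x\|_g^p + 1 - \delta_k > k - \sum_{i\le k}\delta_i$. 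Setting $x = \sum_{j=1}^{k-1} z_j''$ (supported in $[0,\infty)$), Lemma \ref{lem:steve1} yields $\lim_{n\to\infty} \|z_n + U_\tau^n(x)\|_g = \max\{1, \|x\|_g\}$, so for $n_k$ sufficiently large $\|\sum_{j=1}^k z_j''\|_g < \max\{1, \|x\|_g\} + \delta_k \le 1 + \sum_{i\le k}\delta_i$. Taking $n_k$ large enough to satisfy both estimates simultaneously and to exceed the prescribed function of $(n_1,\dots,n_{k-1})$ completes the step; telescoping then yields (a) and (b) with total error less than $\varepsilon$.

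The main obstacle is the reduction to canonical shift-configurations. Without the shift-invariance of the $g(\QQ,\ww,p)$-norm, the bounds in (a) and (b) would have to be verified uniformly over an uncountable family of arrangements, which is not directly accessible from Lemmas \ref{lem:steve1} and \ref{lem:steve2}. The remedy is Lemma \ref{lem:Ybasicproperties}\ref{lem:Ybasicproperties:e}: the norm in $g(\QQ,\ww,p)$ is invariant under arbitrary increasing injections of supports, so only the combinatorial data of the support order enters, and the induction is reduced to the tractable canonical case.
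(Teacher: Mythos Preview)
Your proof is correct and follows essentially the same route as the paper: recursive construction of $(n_k)$ via Lemmas~\ref{lem:steve1} and~\ref{lem:steve2}, together with the reduction to canonical shift-configurations by Lemma~\ref{lem:Ybasicproperties}\ref{lem:Ybasicproperties:e}. The only notable difference is bookkeeping: you introduce summable errors $(\delta_k)$ with $\sum_k\delta_k<\varepsilon$, whereas the paper avoids this by observing that the inductive hypotheses are \emph{strict} inequalities, so that the limits $1+\Vert x'\Vert_g^p$ and $\max\{1,\Vert x''\Vert_g\}$ themselves lie strictly on the correct side of the thresholds $k+1-\varepsilon$ and $1+\varepsilon$, and one can pass to large $n$ without accumulating error; this is a cosmetic rather than substantive distinction.
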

\begin{proof}
We construct $(n_k)_{k=1}^\infty$ recursively. Choose $n_1\in\NN$ arbitrarily. Assume that $(n_j)_{j=1}^{k}$ is constructed. Since
\[
\Vert z_n\Vert_\infty=\frac{\Vert f\Vert_\infty}{\Phi[f,\eta](n)}\to 0
\]
and $\Vert z_n\Vert=1$ for every $n\in\NN$, we can apply Lemma~\ref{lem:steve2} with $m=m_k:=\sum_{j=1}^k n_j$ and
\[
x'=z_{n_1}+ U_\tau^{n_1}(z_{n_2})+\cdots+U_\tau^{n_1+\cdots + n_{k-1}}(z_{n_k}).
\]
We can also apply Lemma~\ref{lem:steve1} with
\[
x''= z_{n_k}+ U_\tau^{n_k}(z_{k-1})+\cdots+ U_\tau^{n_k+\cdots +n_2}(z_{n_1}).
\]
We infer that for $n\in\NN$ large enough $\Vert x'+U_\tau^{m}(z_n)\Vert_g^p>k+1-\varepsilon$ and
$\Vert z_n +U_\tau^n(x'')\Vert_g<1+\varepsilon$. Consequently, given $k\in\NN$, the $k$-tuples $(U_\tau^{m_{j-1}} (z_{n_j} ))_{j=1}^k$ and $( U_\tau^{m_k-m_j} (z_{n_j} ) )_{j=1}^k$ satisfy the desired conditions.
\end{proof}

We are ready to show Proposition~\ref{prop:notsym}. We will take care of this along with the proof of our next Proposition. Note the connection between this result and Theorem~\ref{thm:garling1}~\ref{thm:garling1:c}.

\begin{proposition}\label{cor:Ysubsymmetric} Let $0<p<\infty$ and $\ww\in\WW$. Suppose $\BB$ is a block basic sequence generated on $\QQ$ by a seed $(f,\eta)$ with $E_\infty[f,\eta]=0$. Then $\ell_\infty$ is block finitely representable on $\BB$.
\end{proposition}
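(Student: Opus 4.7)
The plan is to extract from Proposition~\ref{prop:nonsymmetric}(b) disjointly supported normalized blocks $v_1\prec\cdots\prec v_k$ of $\BB$ whose sum has norm close to $1$, and then invoke $1$-unconditionality to deduce $(1+\varepsilon)$-equivalence with the unit vector basis of $\ell_\infty^k$. The key observation is that the reverse-order arrangement in (b) can be realized by integer $U_\tau$-translates, and these translates correspond precisely to genuine blocks of $\BB$.

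Fix $k\in\NN$ and $\varepsilon>0$. By Lemma~\ref{lem:Ybasicproperties}\ref{lem:Ybasicproperties:e} we may assume, without loss of generality up to isometric equivalence, that $\BB=\BB_\QQ[f,\eta]=(\yy_r)_{r=1}^\infty$ with $\yy_r=\sum_n a_n\,\ee_{r-1+q_n}$, where $f=(a_n)_{n=1}^\infty$ and $q[\eta]=(q_n)_{n=1}^\infty\subseteq(0,1)\cap\QQ$. A direct calculation then gives $U_\tau(\yy_r)=\yy_{r+1}$, and hence $U_\tau^m(z_n)=\Phi[f,\eta](n)^{-1}\sum_{r=m+1}^{m+n}\yy_r$ is a normalized block of $\BB$ for every $m\ge 0$ and $n\in\NN$, where $z_n:=\Phi[f,\eta](n)^{-1}\sum_{r=1}^n\yy_r$. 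Apply Proposition~\ref{prop:nonsymmetric} to obtain a sequence $(n_j)_{j=1}^\infty$ satisfying its conclusion (b). Because the norm on finite-support vectors in $g(\QQ,\ww,p)$ depends only on the coefficients and the \emph{relative} order of their supports --- not on the specific rational locations, as follows from Lemma~\ref{lem:Ybasicproperties}\ref{lem:Ybasicproperties:e} applied to an order-preserving matching between two admissible shift configurations --- we may realize the reverse-ordered shifts $z''_j$ in (b) as integer $U_\tau$-translates: setting $M_k:=0$ and $M_j:=\sum_{l=j+1}^k n_l$ for $1\le j<k$, the vectors $z''_j:=U_\tau^{M_j}(z_{n_j})$ satisfy $z''_k\prec\cdots\prec z''_1$ and
\[
\left\Vert\sum_{j=1}^k z''_j\right\Vert_g<1+\varepsilon.
\]
Each $z''_j$ is a normalized block of $\BB$ supported on the interval $[M_j+1,M_j+n_j]$ of $\NN$-indices, so setting $v_i:=z''_{k+1-i}$ for $i=1,\dots,k$ produces disjointly supported normalized blocks $v_1\prec\cdots\prec v_k$ of $\BB$ with $\Vert\sum_{i=1}^k v_i\Vert_g<1+\varepsilon$.

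By the $1$-unconditionality of the unit vector system of $g(\QQ,\ww,p)$, the disjointness of the supports of the $v_i$'s yields, for any scalars $(a_i)_{i=1}^k$,
\[
\max_{1\le i\le k}|a_i|=\max_i\Vert a_iv_i\Vert_g\le\left\Vert\sum_{i=1}^k a_iv_i\right\Vert_g\le\left(\max_i|a_i|\right)\left\Vert\sum_{i=1}^k v_i\right\Vert_g<(1+\varepsilon)\max_{1\le i\le k}|a_i|.
\]
Hence $(v_i)_{i=1}^k$ is $(1+\varepsilon)$-equivalent to the unit vector basis of $\ell_\infty^k$. Since $k$ and $\varepsilon$ were arbitrary, $\ell_\infty$ is block finitely representable on $\BB$.

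The principal technical point is the verification that the reverse-order shifts appearing in Proposition~\ref{prop:nonsymmetric}(b) can be realized as bona fide blocks of $\BB$ indexed by consecutive intervals of $\NN$. This uses the special compatibility $U_\tau(\yy_r)=\yy_{r+1}$ between the $\QQ$-translation and the canonical form of $\BB=\BB_\QQ[f,\eta]$, together with the shift-invariance of Lemma~\ref{lem:Ybasicproperties}\ref{lem:Ybasicproperties:e}; once this is observed, the $\ell_\infty^k$-structure falls out from the lattice-monotonicity of the norm and the fact that the $v_i$'s are normalized and disjointly supported.
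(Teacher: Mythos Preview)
Your proof is correct and follows essentially the same approach as the paper: invoke Proposition~\ref{prop:nonsymmetric}(b) to obtain reverse-ordered shifts $z''_k\prec\cdots\prec z''_1$ with $\Vert\sum_j z''_j\Vert_g<1+\varepsilon$, and then use $1$-unconditionality (the lattice property of $g(\QQ,\ww,p)$) to conclude that these normalized disjointly supported vectors are $(1+\varepsilon)$-equivalent to the unit vector basis of $\ell_\infty^k$. Your explicit verification that the integer $U_\tau$-translates satisfy $U_\tau(\yy_r)=\yy_{r+1}$, and hence that the $z''_j$'s are genuine consecutive-block averages of $\BB_\QQ[f,\eta]$, makes transparent a point the paper leaves implicit (the paper simply takes ``the $k$-tuples provided by Proposition~\ref{prop:nonsymmetric}'', relying on the specific construction $(U_\tau^{m_k-m_j}(z_{n_j}))_{j=1}^k$ in that proof).
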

\begin{proof}[Proof of Propositions~\ref{prop:notsym} and \ref{cor:Ysubsymmetric}]
Given $0<\varepsilon <1$ and $k\in\NN$, let
$
\BB'_{\varepsilon,k}:=(\xx_j')_{j=1}^k
$
and
$
\BB''_{\varepsilon,k}:=(\xx_j'')_{j=1}^k
$
be the $k$-tuples provided by Proposition~\ref{prop:nonsymmetric}. By $1$-unconditionality, $\BB''_{\varepsilon,k}$ is $(1+\varepsilon)$-equivalent to the unit vector basis of $\ell_\infty^k$. If $\BB$ were symmetric, $\BB'_{\varepsilon,k}$ would be uniformly equivalent to $\BB''_{\varepsilon,k}$. Hence there would be a uniform constant $C$ such that
\[
\left\Vert \sum_{j=1}^k x_j'\right\Vert_g \le C\left\Vert \sum_{j=1}^k x_j'' \right\Vert_g.
\]
Letting $k$ tend to infinity, we would reach an absurdity.
\end{proof}

Next we put to use our techniques for obtaining structural properties of Garling sequence spaces.

\begin{theorem}Let $0<p<\infty$ and $\ww\in\WW$. Suppose that $\BB$ is a symmetric basic sequence of $g(\ww,p)$. Then $\BB$ is equivalent to the unit vector basis of $\ell_p$.
\end{theorem}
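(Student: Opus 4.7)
The plan is to deduce this theorem as a direct corollary of the dichotomy theorem (Theorem~\ref{thm:dochotomyGarling}) combined with the non-symmetry statement of Proposition~\ref{prop:notsym}. Since every symmetric basic sequence is in particular subsymmetric, the dichotomy applies to $\BB$, so either $\BB$ is equivalent to a basic sequence generated on $\QQ$ by a seed $(f,\eta)$ with $E_\infty[f,\eta]=0$, or else $\BB$ is equivalent to the unit vector basis of $\ell_p$.

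First I would observe that alternative \ref{thm:dochotomyGarling:a} of Theorem~\ref{thm:dochotomyGarling} is incompatible with symmetry. Indeed, Proposition~\ref{prop:notsym} asserts precisely that any basic sequence generated on $\QQ$ by such a seed $(f,\eta)$ fails to be symmetric. Since symmetry (like subsymmetry) is an invariant of equivalence classes of basic sequences, if $\BB$ were equivalent to such a generated sequence, $\BB$ itself would also fail to be symmetric, contradicting the hypothesis.

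Therefore only alternative \ref{thm:dochotomyGarling:b} remains, which says exactly that $\BB$ is equivalent to the unit vector system of $\ell_p$, giving the conclusion. No further estimates or constructions are required: the entire argument is a two-line application of results already established in the preceding subsections.

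The only subtlety — and therefore the step that must be verified carefully before writing — is that Proposition~\ref{prop:notsym} really does rule out \emph{every} representative of the equivalence class in case (a), regardless of how the seed $(f,\eta)$ is chosen. This is immediate because symmetry is preserved under equivalence of basic sequences and Proposition~\ref{prop:notsym} asserts non-symmetry of \emph{any} basic sequence generated on $\QQ$ by such a seed. So no genuine obstacle remains; the theorem follows formally from the results already proved.
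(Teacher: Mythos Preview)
Your proposal is correct and matches the paper's own proof essentially verbatim: the paper's argument is the single line ``Just combine Theorem~\ref{thm:dochotomyGarling} with Proposition~\ref{prop:notsym},'' which is precisely the dichotomy-plus-non-symmetry reasoning you outline. Your remark that symmetry is an equivalence-class invariant is the only point one needs to make explicit, and you handle it correctly.
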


\begin{proof}Just combine Theorem~\ref{thm:dochotomyGarling} with Proposition~\ref{prop:notsym}.
\end{proof}

\begin{theorem}Let $0<p<\infty$ and $\ww\in\WW$. Suppose that $\BB$ is a subsymmetric basic sequence of $g(\ww,p)$ nonequivalent to the unit vector system of $\ell_p$. Then $\ell_\infty$ is block finitely representable on $\BB$.
\end{theorem}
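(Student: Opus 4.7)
The strategy is a direct combination of the dichotomy theorem (Theorem~\ref{thm:dochotomyGarling}) with Proposition~\ref{cor:Ysubsymmetric}; no new construction is required. First I would apply Theorem~\ref{thm:dochotomyGarling} to the subsymmetric basic sequence $\BB$: it yields exactly two alternatives, namely either $\BB\simeq\EE[\ell_p]$, or $\BB$ is equivalent to a basic sequence $\BB'$ generated on $\QQ$ by a seed $(f,\eta)$ with $E_\infty[f,\eta]=0$. Our hypothesis rules out the first alternative, so we are necessarily in the second case, and in particular $\BB\simeq\BB'$ for some $\BB'$ of the required form.

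Next I would invoke Proposition~\ref{cor:Ysubsymmetric}, whose conclusion is precisely that $\ell_\infty$ is block finitely representable on such a $\BB'$. Concretely, for each $n\in\NN$ and each $\varepsilon>0$, the proof of that proposition produced disjointly supported blocks $\BB''_{\varepsilon,n}$ of $\BB'$ that are $(1+\varepsilon)$-equivalent to the unit vector basis of $\ell_\infty^n$.

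It then remains to transfer the block finite representability from $\BB'$ to $\BB$. Any block $u'=\sum c_j\, \yy_j'$ of $\BB'=(\yy_j')$ corresponds to the block $u=\sum c_j\, \yy_j$ of $\BB=(\yy_j)$ with the same scalar coefficients, and the equivalence constant $K$ between $\BB$ and $\BB'$ degrades the norm comparisons only by a factor of $K$ on each side. Thus the blocks $\BB''_{\varepsilon,n}$ of $\BB'$ produce blocks of $\BB$ that are uniformly $K^2(1+\varepsilon)$-equivalent to the $\ell_\infty^n$-basis. This already gives block finite representability of $\ell_\infty$ on $\BB$ in the uniform-constant sense.

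The main---essentially the only---subtlety is whether one also needs the $(1+\varepsilon)$-refinement on $\BB$ itself; if so, I would note that $\BB$ is $1$-subsymmetric (up to renorming) and apply a standard stabilization / spreading-model argument (in the spirit of Krivine's theorem) to push the constant down arbitrarily close to $1$. The heart of the argument, however, is simply that the dichotomy has already reduced the problem to Proposition~\ref{cor:Ysubsymmetric}.
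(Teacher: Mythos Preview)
Your proposal is correct and matches the paper's proof exactly: the paper's argument is the single line ``Combine Theorem~\ref{thm:dochotomyGarling} with Proposition~\ref{cor:Ysubsymmetric}.'' Your additional remarks about transferring block finite representability across the equivalence $\BB\simeq\BB'$ are a careful (and valid) elaboration of a point the paper leaves implicit, since this property is preserved under equivalence of basic sequences.
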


\begin{proof}Combine Theorem~\ref{thm:dochotomyGarling} with Proposition~\ref{cor:Ysubsymmetric}.
\end{proof}

\begin{corollary}Let $1\le p<\infty$ and $\ww\in\WW$. Suppose that $\BB$ is a subsymmetric basis equivalent to a basic sequence of both $g(\ww,p)$ and $d(\ww,p)$. Then $\BB$ is equivalent to the canonical basis of $\ell_p$. In particular, $d(\ww,p)$ is not isomorphic to a subspace of $g(\ww,p)$ and $g(\ww,p)$ is not isomorphic to a subspace of $d(\ww,p)$.
\end{corollary}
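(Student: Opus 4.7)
The plan is to combine three ingredients from the excerpt: the dichotomy Theorem~\ref{thm:dochotomyGarling} for subsymmetric basic sequences of $g(\ww,p)$, the result Theorem~\ref{thm:symvssubsymLorentz} that every subsymmetric basic sequence of $d(\ww,p)$ is automatically symmetric, and Proposition~\ref{prop:notsym}, which guarantees that any basic sequence generated on $\QQ$ by a seed $(f,\eta)$ with $E_\infty[f,\eta]=0$ fails to be symmetric. These three statements slot together so tightly that the main claim essentially proves itself.

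First, since $\BB$ is subsymmetric and equivalent to a basic sequence of $d(\ww,p)$, Theorem~\ref{thm:symvssubsymLorentz} forces $\BB$ to be symmetric (subsymmetry and symmetry are intrinsic properties preserved under equivalence). Second, since $\BB$ is simultaneously equivalent to a subsymmetric basic sequence of $g(\ww,p)$, Theorem~\ref{thm:dochotomyGarling} leaves only two alternatives: either $\BB$ is equivalent to a basic sequence generated on $\QQ$ by a seed $(f,\eta)$ with $E_\infty[f,\eta]=0$, or $\BB$ is equivalent to $\EE[\ell_p]$. The first alternative is incompatible with symmetry by Proposition~\ref{prop:notsym}, so we must be in the second, i.e.\ $\BB\simeq\EE[\ell_p]$.

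For the ``in particular'' clauses the argument is symmetric in both directions. If $d(\ww,p)$ embedded isomorphically into $g(\ww,p)$, then under the embedding $\EE[d(\ww,p)]$ would be a basic sequence of $g(\ww,p)$; being symmetric (hence subsymmetric), the main statement would give $\EE[d(\ww,p)]\simeq\EE[\ell_p]$. This, however, is impossible: the fundamental function of $\EE[d(\ww,p)]$ is $\bigl(\sum_{j=1}^{n}w_j\bigr)^{1/p}$, which is not equivalent to $n^{1/p}$ because $\ww\in c_0$ forces $n^{-1}\sum_{j=1}^n w_j\to 0$. The other direction is ruled out identically, using that $\EE[g(\ww,p)]$ is subsymmetric by Theorem~\ref{thm:garling1}~\ref{thm:garling1:a} and has the same fundamental function.

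No step is a genuine obstacle: the structural work was already done in Theorem~\ref{thm:dochotomyGarling}, Proposition~\ref{prop:notsym} and Theorem~\ref{thm:symvssubsymLorentz}. The only small verification is the elementary fundamental-function computation showing that neither canonical basis is equivalent to $\EE[\ell_p]$, which is immediate from $\ww\in c_0\setminus\ell_1$.
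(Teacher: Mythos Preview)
Your proof is correct and follows essentially the same route as the paper's, which tersely reads ``Combine Theorem~\ref{thm:symvssubsymLorentz} and Proposition~\ref{prop:notsym}.'' You make explicit the role of Theorem~\ref{thm:dochotomyGarling} as the bridge between those two results, and you supply the fundamental-function computation for the ``in particular'' clause that the paper leaves to the reader; both additions are accurate and welcome.
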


\begin{proof}Combine Theorem~\ref{thm:symvssubsymLorentz} and Proposition~\ref{prop:notsym}.
\end{proof}

We close this section with the aforementioned application to Garling sequence spaces over $\QQ$.

\begin{proposition} Let $1\le p<\infty$ and $\ww\in\WW$. Given $\varepsilon>0$ there is a disjointly supported basic sequence in $g_0(\QQ,\ww,p)$ which is $(1+\varepsilon)$-equivalent to the unit vector system of $c_0$. Consequently $\ell_\infty$ is $(1+\varepsilon)$-isomorphic to a subspace of $g(\QQ,\ww,p)$ and $g_0(\QQ,\ww,p)\subsetneq g(\QQ,\ww,p)$.
\end{proposition}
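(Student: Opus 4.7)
The lower bound $\max_k|c_k|\le\|\sum_k c_kv_k\|_g$ is automatic from $1$-unconditionality of the coordinate system of $g(\QQ,\ww,p)$ together with $\|v_k\|_g=1$, so the real task will be to manufacture disjointly supported unit vectors $(v_k)_{k=1}^\infty$ in $g_0(\QQ,\ww,p)$ whose finite subset sums are bounded by $1+\varepsilon$. The plan is to build them one-at-a-time, using Lemma~\ref{lem:steve1} at each stage to absorb the newest vector into all the $2^k$ subset sums already on hand.

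For building blocks take
\[z_n:=\Phi(n)^{-1}\sum_{i=1}^n\ee_{i-1/2},\qquad\text{where}\quad \Phi(n)^p:=\sum_{i=1}^n w_i.\]
A direct calculation from Lemma~\ref{lem:Ybasicproperties}~\ref{lem:Ybasicproperties:a} gives $\|z_n\|_g=1$, while $\|z_n\|_\infty=\Phi(n)^{-1}\to 0$ because $\ww\notin\ell_1$. Fix a summable sequence $\varepsilon_k>0$ with $\sum_k\varepsilon_k<\varepsilon$ and set $\delta_k:=\sum_{j\le k}\varepsilon_j$. The inductive hypothesis at stage $k$ is that $v_1,\dots,v_k$ are placed on $\QQ$ with pairwise disjoint supports arranged so that $\supp(v_k)\prec\dots\prec\supp(v_1)$, each $\|v_j\|_g=1$, and
\[\Bigl\|\sum_{j\in A}v_j\Bigr\|_g\le 1+\delta_k\qquad\text{for every nonempty}\ A\subseteq\{1,\dots,k\}.\]

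For the inductive step, enumerate the $2^k-1$ partial sums $x_A:=\sum_{j\in A}v_j$; by Lemma~\ref{lem:Ybasicproperties}~\ref{lem:Ybasicproperties:e} each $x_A$ has a shift $\tilde x_A$ supported on $[0,\infty)$ of the same $g$-norm, namely at most $1+\delta_k$. Applying Lemma~\ref{lem:steve1} to each of the finitely many $\tilde x_A$ separately yields an integer $N_{k+1}$ so large that
\[\bigl\|z_n+U_\tau^n(\tilde x_A)\bigr\|_g\le\max\{1,\|\tilde x_A\|_g\}+\varepsilon_{k+1}\le 1+\delta_{k+1}\qquad(n\ge N_{k+1})\]
for every nonempty $A\subseteq\{1,\dots,k\}$. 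Now let $v_{k+1}$ be any shift of $z_{N_{k+1}}$ whose support is a block of rationals strictly to the left of $\supp(v_k)$. Since $v_{k+1}\prec x_A$ and the two have disjoint supports, Lemma~\ref{lem:Ybasicproperties}~\ref{lem:Ybasicproperties:e} identifies $\|v_{k+1}+x_A\|_g$ with $\|z_{N_{k+1}}+U_\tau^{N_{k+1}}(\tilde x_A)\|_g\le 1+\delta_{k+1}$, closing the induction.

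Once $(v_k)_{k=1}^\infty$ is produced, $1$-unconditionality promotes the subset bound to $\|\sum c_kv_k\|_g\le(1+\varepsilon)\max_k|c_k|$ for every finitely supported scalar sequence, and combined with the automatic lower bound this delivers the required disjointly supported basic sequence in $g_0(\QQ,\ww,p)$ that is $(1+\varepsilon)$-equivalent to the canonical basis of $c_0$. For an arbitrary $c\in\ell_\infty$, the pointwise sum $y_c:=\sum_k c_kv_k\in\FF^\QQ$ satisfies $\|y_c\|_g\le(1+\varepsilon)\|c\|_\infty$ by Lemma~\ref{lem:Ybasicproperties}~\ref{lem:Ybasicproperties:b} (supremum of finite truncations) and $\|y_c\|_g\ge\|c\|_\infty$ by $1$-unconditionality, so $c\mapsto y_c$ is the desired $(1+\varepsilon)$-embedding of $\ell_\infty$ into $g(\QQ,\ww,p)$. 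Specialising to $c\equiv 1$, the vector $y:=\sum_kv_k$ lies in $g(\QQ,\ww,p)\setminus g_0(\QQ,\ww,p)$, since $\|y-\sum_{j\le k}v_j\|_g\ge\|v_{k+1}\|_g=1$ for every $k$, yielding the strict inclusion. The only subtlety is the uniformity of Lemma~\ref{lem:steve1} over the exponentially many sums $x_A$, but because that family is finite at each stage a straightforward diagonal choice of $N_{k+1}$ suffices.
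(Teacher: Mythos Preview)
Your proof is correct and follows essentially the same route as the paper's: the paper simply invokes Proposition~\ref{prop:nonsymmetric} (whose proof already carries out your inductive construction via Lemma~\ref{lem:steve1}) with the seed corresponding to $y=\ee_{1/2}$, which produces exactly your blocks $z_n$, and then places shifts of the resulting $z_{n_j}$ in reverse order inside $(2^{-j},2^{-j+1})$. One simplification worth noting: since your $v_k$ are non-negative and disjointly supported, the lattice property already gives $\bigl\|\sum_{j\in A}v_j\bigr\|_g\le\bigl\|\sum_{j=1}^k v_j\bigr\|_g$ for every $A\subseteq\{1,\dots,k\}$, so it suffices to track only the full sums $\sum_{j=1}^k v_j$ in the induction rather than all $2^k-1$ subset sums.
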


\begin{proof}Pick an arbitrary $0\not=y\in g_0(\QQ,\ww,p)$ (for instance, $y=\ee_{1/2}$) and let $(f,\eta)$ be  the seed with which $y$ is compatible,
so that $E_\infty[f,\eta]=0$ by Proposition~\ref{prop:Yseeds}. Let $(n_j)_{j=1}^\infty$ be the sequence provided by Proposition~\ref{prop:nonsymmetric}. For each $j\in\NN$ there is a shift $z_j''$ of $z_{n_j}$ such that $\supp(z_j'')\subseteq(2^{-j}, 2^{-j+1})$. Let $x=(a_j)_{j=1}^\infty\in \FF^\NN$. By $1$-unconditionality and Lemma~\ref{lem:Ybasicproperties}~\ref{lem:Ybasicproperties:b} and \ref{lem:Ybasicproperties:e}, the formal series $T(x):=\sum_{j=1}^\infty a_j \, z_{j}''$ satisfies
\[
\Vert x\Vert_\infty
\le \Vert T(x) \Vert_g
\le \Vert x\Vert_\infty\sup_k \left\Vert \sum_{j=1}^k \, z_{j}''\right\Vert_g
\le (1+\varepsilon) \Vert x\Vert_\infty.\qedhere
\]
\end{proof}

\subsection{Non-equivalent subsymmetric basic sequences}\label{GarlingSubSymSection}In this section we show another geometric difference between Lorentz sequence spaces and Garling sequence spaces. Recall that, as we pointed out in Sect.~\ref{intro}, some Lorentz sequence spaces have exactly two non-equivalent subsymmetric basic sequences.

\begin{theorem}\label{thm:infinitessymbases}Let $0< p<\infty$ and $\ww\in\WW$. The Garling sequence space $g(\ww,p)$ contains a continuum of inequivalent subsymmetric basic sequences which is totally ordered in the domination ordering. The Garling sequence space $g(\ww,p)$ also contains a continuum of subsymmetric basic sequences which are pairwise incomparable in the domination ordering.
\end{theorem}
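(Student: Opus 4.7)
The starting point is the dichotomy of Theorem~\ref{thm:dochotomyGarling}, which identifies every subsymmetric basic sequence in $g(\ww,p)$ as either equivalent to the canonical basis of $\ell_p$ or to a basic sequence $\BB[f,\eta,\nu]$ for a seed $(f,\eta)$ with $E_\infty[f,\eta]=0$; by Proposition~\ref{lem:Y5}, the equivalence class is concretely realized as the sequence $\BB_\QQ[f,\eta]$ of disjoint translates $(U_\tau^{k-1} y)_{k=1}^\infty$ of a compatible vector $y \in g_0(\QQ,\ww,p)$. The proof thus reduces to constructing two continuum-sized families of seeds $\{(f_\alpha,\eta_\alpha)\}$ with $E_\infty[f_\alpha,\eta_\alpha]=0$ whose induced basic sequences form, respectively, a chain and an antichain in the domination ordering.

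For the totally ordered chain, I would fix a strictly increasing sequence $(q_j)_{j=1}^\infty \subseteq \QQ \cap (0,1)$ (so all vectors are compatible with the trivial positioning $\eta_0$) and pick a family $\{f_\alpha = (a_j^{(\alpha)})_{j=1}^\infty\}_{\alpha \in I}$ of non-negative non-increasing sequences in $g(\ww,p)$, each of infinite support, indexed by a continuum interval $I \subseteq \RR$, satisfying $a_j^{(\alpha)} \le a_j^{(\beta)}$ pointwise whenever $\alpha \le \beta$. By Proposition~\ref{prop:2} each $f_\alpha$ has $E_\infty[f_\alpha,\eta_0]=0$, and Lemma~\ref{lemma:second} delivers $\BB_\alpha \lesssim_1 \BB_\beta$ for $\alpha \le \beta$. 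Pairwise inequivalence for distinct $\alpha,\beta$ would then be extracted from the fundamental functions $\Phi_\alpha(n) := \|\sum_{k=1}^n U_\tau^{k-1} y_\alpha\|_g$ with $y_\alpha = \sum_j a_j^{(\alpha)} \ee_{q_j}$, computed in $g_0(\QQ,\ww,p)$ via Lemma~\ref{lem:garlingq1}, by exhibiting distinct asymptotic growth rates. A natural concrete choice is $a_j^{(\alpha)} = c_j^\alpha$ for a positive sequence $c_j \in (0,1)$ tuned to $\ww$ so that $\sum_j c_j^{p\alpha} w_j < \infty$ on an open interval of $\alpha$.

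For the pairwise incomparable antichain, I would use a classical almost-disjoint family $\{N_\alpha\}_{\alpha \in [0,1]}$ of infinite subsets of $\NN$ of cardinality continuum (for instance, the branches of the infinite binary tree, so that $|N_\alpha \cap N_\beta|<\infty$ for $\alpha \ne \beta$). For each $\alpha$ I would build a seed $f_\alpha$ whose block structure is indexed by $N_\alpha$, specifically of the form $f_\alpha = \sum_{k \in N_\alpha} c_k \chi_{I_k}$, where the $I_k$ are rapidly growing disjoint blocks in $\NN$ and $c_k$ decay rapidly enough to secure $E_\infty[f_\alpha,\eta_0]=0$. By design, the fundamental function $\Phi_\alpha$ exhibits distinctive growth jumps at scales tied to $N_\alpha$ and is essentially flat elsewhere. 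Almost-disjointness forces the jump scales of $\Phi_\alpha$ and $\Phi_\beta$ to separate infinitely often for $\alpha \ne \beta$, making $\Phi_\alpha$ and $\Phi_\beta$ pairwise incomparable. Since $\BB_\alpha \lesssim \BB_\beta$ implies $\Phi_\alpha \lesssim \Phi_\beta$ (an invariant of the equivalence class), the $\BB_\alpha$ would form a continuum-sized antichain.

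The principal obstacle is the fine analysis of these fundamental functions in $g_0(\QQ,\ww,p)$. Unlike the symmetric Lorentz setting, the Garling norm depends on the positions of coefficients through a supremum over increasing rational sequences, and the interaction of the coefficient structure with the weight $\ww$ is delicate. Lemmas~\ref{lem:steve1} and~\ref{lem:steve2} provide the crucial computational tools for controlling how pieces fit together when stacking translates, and with their aid the coefficient decay rates and block structures can be tuned to deliver the required distinct (chain) or incomparable (antichain) asymptotic behavior; the main work is to make these tunings robust across arbitrary $\ww \in \WW$ rather than for convenient special weights.
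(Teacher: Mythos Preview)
Your framework is aligned with the paper's: distinguish subsymmetric basic sequences via their fundamental functions $\Phi[f](n)=\Vert\sum_{k=1}^n \yy_k\Vert_g$, and index a continuum family by subsets of $\NN$. But there is a genuine gap in the execution. The heart of the paper's argument is Proposition~\ref{prop:nonsymmetric}, which you do not invoke: the non-symmetry of Garling spaces produces, for any $m$, blocks $(y_i)_{i=1}^m$ with $\Vert\sum_i y_i\Vert_g=1$ whose \emph{forward}-shifted sum has norm of order $m^{1/p}$. The paper builds a single vector $h_b=\sum_{\tau\in b} u_\tau$ along each branch $b$ of the binary tree, where the piece $u_\tau$ at level $k+1$ is either zero (if $b_{k+1}=0$) or a small scaled copy of $\sum_{i=1}^{m_k} y_i$ from Proposition~\ref{prop:nonsymmetric} (if $b_{k+1}=1$). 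Projecting $m_k$ consecutive translates of $h_b$ onto these sub-blocks gives $\Phi[h_b](m_k)\ge A_k$ when $b_{k+1}=1$, while a tail estimate gives $\Phi[h_b](m_k)\le B_k$ when $b_{k+1}=0$, with $A_k/B_k\to\infty$ uniformly in $\ww$. Both the chain and the antichain then follow from the \emph{same} construction via the set-theoretic Lemmas~\ref{Lemma:TotallyOrdered} and~\ref{Lemma:TotallyDisjoint} (which require $B\setminus A$ infinite, not almost-disjointness).

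Your chain proposal $a_j^{(\alpha)}=c_j^\alpha$ with the trivial positioning does not obviously yield inequivalent sequences for every $\ww\in\WW$. In the Lorentz analogue $d(\ww,p)$ with submultiplicative weight, every basic sequence generated by a vector is equivalent either to $\EE[d(\ww,p)]$ or to $\EE[\ell_p]$, so a coefficient-only construction of this type collapses; you give no mechanism that separates the $\Phi_\alpha$ in Garling spaces beyond what would already work in the Lorentz setting. Similarly, in your antichain proposal, putting the mass of $f_\alpha$ on blocks $I_k$ for $k\in N_\alpha$ does not by itself force $\Phi[f_\alpha]$ to jump at those scales: $\Phi[f_\alpha](n)$ measures $n$ translates of the \emph{whole} vector, not the internal block structure of a single copy. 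The bridge from coefficient structure to fundamental-function jumps is precisely the forward/backward asymmetry of Proposition~\ref{prop:nonsymmetric}, and without it the ``principal obstacle'' you identify is not merely a technical tuning issue but the missing idea.
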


Our proof of Theorem~\ref{thm:infinitessymbases} will rely on Proposition~\ref{prop:nonsymmetric} (or, alternatively, on
\cite{AADK2019}*{Lemma 2.3}) and the following set theory lemmas.

\begin{lemma}\label{Lemma:TotallyOrdered}There is an uncountable totally ordered subset $\AAA$ of $\PP(\NN)$ such that if $A$, $B\in\AAA$ are such that $A\subseteq B$ and $A\not=B$ then $B\setminus A$ is infinite.
\end{lemma}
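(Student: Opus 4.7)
The plan is to realize the desired family as a chain of Dedekind-type initial segments of the rationals. Fix any bijection $\NN\to\QQ$, so that it suffices to exhibit the required $\AAA$ inside $\PP(\QQ)$.

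For each real number $r\in\RR$, define
\[
A_r=\{q\in\QQ \colon q<r\},
\]
and set $\AAA=\{A_r \colon r\in\RR\}$. I would first check that $\AAA$ is totally ordered under inclusion: if $r\le s$, then obviously $A_r\subseteq A_s$. Next I would verify that $r\mapsto A_r$ is injective, which immediately gives $|\AAA|=|\RR|$, so $\AAA$ is uncountable: if $r<s$, the density of $\QQ$ in $\RR$ provides some $q\in\QQ$ with $r<q<s$, so that $q\in A_s\setminus A_r$, and in particular $A_r\ne A_s$.

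Finally I would establish the infinite-difference condition. Suppose $A,B\in\AAA$ with $A\subsetneq B$, say $A=A_r$ and $B=A_s$. By the total ordering and injectivity just verified, we must have $r<s$. Then
\[
B\setminus A=\{q\in\QQ \colon r\le q<s\},
\]
which, again by the density of $\QQ$ in $\RR$ applied inside the open interval $(r,s)$, contains infinitely many rationals. Transporting back along the fixed bijection $\NN\to\QQ$ yields the required subset of $\PP(\NN)$.

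There is really no main obstacle here; the argument is a short application of the density and uncountability of the reals. The only minor point to keep in mind is to use strict inequality ``$q<r$'' rather than ``$q\le r$'' in the definition of $A_r$, so that the family is literally totally ordered (otherwise $A_r$ and $\{q\in\QQ\colon q\le r\}$ would differ by a single rational at rational values of $r$, introducing chains of length two with finite difference).
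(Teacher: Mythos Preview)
Your argument is correct and is in fact the most classical one: Dedekind-type initial segments of $\QQ$, indexed by real numbers, form an uncountable chain in $\PP(\QQ)$ in which every proper inclusion has infinite difference. Each step you outline goes through without issue. (Your closing remark is slightly off: defining $A_r=\{q\in\QQ\colon q\le r\}$ would \emph{also} yield a totally ordered family with infinite differences, since you would never place both $\{q<r\}$ and $\{q\le r\}$ in the same family; but this is an aside and does not affect the proof.)

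The paper takes a different route. It first builds, by a back-and-forth style recursion along the dyadic rationals $\DD\subset(0,1]$, a map $A\colon\DD\to\PP(\NN)$ with $A(s)\subsetneq A(t)$ and $A(t)\setminus A(s)$ infinite whenever $s<t$; it then extends to all of $(0,1]$ by setting $B(s)=\bigcup\{A(q)\colon q\in\DD,\ q<s\}$. The payoff of this slightly heavier construction is that it simultaneously yields the companion Lemma~\ref{Lemma:TotallyDisjoint} (an uncountable family of pairwise incomparable subsets with infinite symmetric differences), by passing to the ``interval'' sets $B_{s,t}=B(t)\setminus B(s)$. Your approach is more elementary and perfectly adequate for the present lemma, but it does not immediately produce the antichain needed for Lemma~\ref{Lemma:TotallyDisjoint}; if you also need that lemma, you would have to supply a separate argument (for instance, the standard almost-disjoint family indexed by branches of the binary tree).
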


\begin{lemma}\label{Lemma:TotallyDisjoint}There is an uncountable subset $\AAA$ of $\PP(\NN)$ such that for every $A$, $B\in\AAA$ with $A\not=B$ both $B\setminus A$ and $A\setminus B$ is infinite.
\end{lemma}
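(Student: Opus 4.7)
The plan is to construct an almost disjoint family of cardinality the continuum; such a family automatically has the desired property, since if $A,B$ are infinite sets with $A\cap B$ finite and $A\neq B$, then $A\setminus B=A\setminus(A\cap B)$ is infinite, and similarly for $B\setminus A$.

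To build such a family, I would work inside a convenient countable set identified with $\NN$ via a fixed bijection. The cleanest choice is the full binary tree $T=\bigcup_{n\ge 0}\{0,1\}^n$, which is countable. For each infinite binary sequence $x\in\{0,1\}^\NN$, define
\[
A_x=\{x|_n : n\in\NN\}\subseteq T,
\]
the set of initial segments of $x$. Each $A_x$ is infinite, and there are $2^{\aleph_0}$ many such sets. If $x\neq y$, pick the first coordinate $n$ at which they differ; then any common element of $A_x$ and $A_y$ must be an initial segment of both, hence have length $<n$. So $A_x\cap A_y$ is finite for $x\neq y$.

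Transporting the family $\{A_x : x\in\{0,1\}^\NN\}$ to $\PP(\NN)$ via the fixed bijection $T\leftrightarrow\NN$ yields an uncountable (indeed, continuum-sized) almost disjoint family $\AAA\subseteq\PP(\NN)$. By the observation in the first paragraph, this $\AAA$ satisfies the required conclusion: for any distinct $A,B\in\AAA$, both $A\setminus B$ and $B\setminus A$ are infinite.

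There is essentially no obstacle here; the only point requiring a word is verifying that $A_x\cap A_y$ is finite, which is immediate from the tree structure. An equally workable alternative, should one prefer a more analytic flavor, is to fix a bijection $\NN\leftrightarrow\QQ$ and, for each $r\in\RR\setminus\QQ$, take $A_r$ to be the range of a fixed rational sequence converging to $r$; distinct $r,r'$ force $A_r\cap A_{r'}$ to be finite since a rational sequence cannot converge simultaneously to two different irrationals and the common terms would form such a subsequence of both.
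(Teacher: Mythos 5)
Your proof is correct, but it takes a different route from the paper. You build a classical almost disjoint family of size continuum (branches of the binary tree, encoded by their sets of initial segments, transported to $\NN$ by a bijection), and then observe that if $A\cap B$ is finite and $A,B$ are infinite and distinct, then both $A\setminus B$ and $B\setminus A$ are infinite; all steps check out, including the injectivity of $x\mapsto A_x$ and the finiteness of $A_x\cap A_y$ for $x\neq y$. The paper instead proves Lemmas~\ref{Lemma:TotallyOrdered} and \ref{Lemma:TotallyDisjoint} simultaneously from one construction: it builds a $\subseteq$-increasing family $B(s)$, $s\in(0,1]$ (obtained as unions of a chain indexed by dyadic rationals), with $B(t)\setminus B(s)$ infinite for $s<t$, which gives the totally ordered family of Lemma~\ref{Lemma:TotallyOrdered}, and then takes the difference sets $B_{s/2,s}=B(s)\setminus B(s/2)$, whose pairwise differences are again difference sets of the chain and hence infinite, which gives Lemma~\ref{Lemma:TotallyDisjoint}. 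The trade-off: your argument is shorter and yields the stronger almost-disjointness property (finite intersections), but it is an antichain-type construction and would not produce the totally ordered family needed for the companion Lemma~\ref{Lemma:TotallyOrdered}, whereas the paper's single chain construction serves both lemmas at once. Your second, "analytic" alternative (rational sequences converging to distinct irrationals) is also a standard correct construction, though the phrase about common terms forming a convergent subsequence should really be phrased in terms of the intersection having $r$ and $r'$ as its only accumulation points; this is a cosmetic point, not a gap.
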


\begin{proof}[Proof of Lemmas~\ref{Lemma:TotallyOrdered} and \ref{Lemma:TotallyDisjoint}] Let $\DD$ be the set of all dyadic rationals in $(0,1]$. Obviously, if $A_0\subseteq A_1$ and $A_1\setminus A_0$ is infinite then there is $A_0\subseteq B \subseteq A_1$ such that both $B\setminus A_0$ and $A_1\setminus B$ are infinite. Thanks to this fact we recursively construct a function
$A\colon\DD\to\PP(\NN)$ such that $A(s)\subseteq A(t)$ and $A(t)\setminus A(s)$ whenever $s<t$. Define $B\colon (0,1]\to \PP(\NN)$ by
\[
B(s)=\cup \{ A(q) \colon q\in \DD, \, q<s \}.
\]
Let $0<s<t\le 1$. By construction, $B(s)\subseteq B(t)$. Since $\DD$ is dense in $(0,1]$, $B_{s,t}:=B(t)\setminus B(s)$ is infinite. This way, the set $\{B(s) \colon s\in (0,1]\}$ does the job claimed in Lemma~\ref{Lemma:TotallyOrdered}.

If $0<s<s'<t<t'\le 1$ then $B_{s,t}\setminus B_{s',t'}=B_{s,s'}$ and $B_{s',t'}\setminus B_{s,t}=B_{t,t'}$ and so both sets are infinite. Hence $\{B_{s/2,s}\colon s\in (0,1]\}$ makes Lemma~\ref{Lemma:TotallyDisjoint} good.
\end{proof}

\begin{proof}[Proof of Theorem~\ref{thm:infinitessymbases}] With the convention $\{0,1\}^0=\emptyset$, let $B_\infty = \cup_{k=0}^\infty \{0,1\}^k$ be the infinite binary tree with its usual partial order $\preceq$. Given $k\in\NN\cup\{0\}$ and $\tau=(b_1,\dots,b_k)\in B_\infty$, let $|\tau| := k$. If $k\ge 1$ and $b_k=0$ (resp.\ $b_k=1$) we shall say that $\tau$ \emph{ends with} $0$ (resp.\ $1$). A \emph{branch} $b$ of $B_\infty$ is a maximal totally ordered subset of $B_\infty$. We identify $b$ with the unique infinite sequence $(b_k)_{k=1}^\infty \in \{0,1\}^\NN$ such that $b = \{(b_1,\dots,b_k) \colon k \ge 0\}$.

We shall define a sequence $(\varepsilon_k)_{k=1}^\infty$ of positive numbers, two increasing sequences $\nu=(n_k)_{k=1}^\infty$ and $(m_k)_{k=1}^\infty$ of natural numbers, and a family of vectors $(u_\tau)_{\tau\in B_\infty}$ such that, with the conventions that $\varepsilon_0 =m_0=1$, $n_0=0$, and $u_\emptyset =\ee_1$, for any $\tau\in B_\infty$ with $|\tau|=k\ge 1$ we have
\begin{itemize}
\item If $\tau$ ends with $0$ then $u_\tau=0$,

\item $\|u_\tau\|_g \le 2^{-k} \varepsilon_{k-1}$,

\item $\varepsilon_k \le \min\{\varepsilon_{k-1}, 2(2^p-1)^{1/p}m_{k-1}^{-1/p}\}$,

\item $\supp(u_\tau)\subseteq \NN[n_{k}]\setminus \NN[n_{k-1}]$,

\item If $\tau$ ends with $1$ there is $(y_i)_{i=1}^{m_{k-1}}$ disjointly supported such that each $y_i$ is a coordinate projection of $u_\tau$ and
\[
\left\| \sum_{i=1}^{m_{k-1}} z_i \right\|_g \ge 2^{-k}\varepsilon_{k-1} m_{k-1}^{1/p}
\]
if $z_i\in\FF^\QQ$  is a shift of
 $y_i$ and $z_1\prec \cdots \prec z_i\prec \cdots \prec z_{m_{k-1}}$.

\item If $f_\tau=\sum_{\sigma\preceq \tau} u_\sigma$ then
\[
\Phi[f_\tau](m_{k}) \le \frac{2^{-k-1}\varepsilon_{k} m_{k}^{1/p}}{k}.
\]
\end{itemize}
To that end we proceed recursively. Suppose $k \ge 0$, and that $\varepsilon_i$, $n_i$, $m_i$ and $u_\tau$ have been defined for $0 \le i \le k$ and $|\tau| \le k$.

Suppose that $|\tau| = k+1$. If $\tau$ ends with $0$, we simply set $u_\tau = 0$. If $\tau$ ends with $1$ then we use Proposition~\ref{prop:nonsymmetric} with an arbitrary finitely supported $f\in g(\ww,p)$ (for instance $f=\ee_{1}$) and Lemma~\ref{lem:Ybasicproperties}~\ref{lem:Ybasicproperties:d} to select vectors $(y_i)_{i=1}^{m_k}$ in $g(\ww,p)$ satisfying
\begin{itemize}
\item $\supp y_i\cap \NN[n_{k}]=\emptyset$,

\item $ \| \sum_{i=1}^{m_{k}} y_i \| = 1$, and

\item $2 \| \sum_{i=1}^{m_{k}} z_i \| \ge m_{k}^{1/p}$ whenever $z_i$ is a shift of
$y_i$ and $z_1\prec \cdots \prec z_i\prec \cdots \prec z_{m_{k}}$.
\end{itemize}
Now define
\begin{equation*}
u_\tau :=2^{-k}\varepsilon_k \sum_{i=1}^{m_{k}} y_i.
\end{equation*}

Once $u_\tau$ is constructed for all $\tau$ with $|\tau|=k+1$, we choose $n_{k+1}$ to be the largest integer belonging to the support of some $u_\tau$.

Let $\varepsilon_{k+1} := \min\left\{\varepsilon_k, 2(2^p-1)^{1/p}m_{k}^{-1/p}\right\}$. Note that, by Propositions~\ref{Prop:EventuallyNull} and \ref{lem:Y5}, $\BB_\QQ[f_\tau]\simeq \EE[g(\ww,p)]$ whenever $|\tau|=k$. Hence,
\[
\lim_{m\to\infty} \sup_{|\tau|=k}\frac{\Phi[f_\tau](m)}{m^{1/p}} = 0.
\]
Thus, if we pick $\delta= 2^{-k-2} (k+1)^{-1} \varepsilon_{k+1}$, there is $m=m_{k+1}>m_{k}$ such that $ m^{-1/p} \Phi[f_\tau](m) \le\delta $ whenever $|\tau|=k$. This ends the recursive definition.

Given a branch $b$ of $B_\infty$, $(u_\tau)_{\tau\in b}$ is a disjointly supported sequence in $g(\ww,p)$. Since
\[
\sum_{\tau\in b} \Vert u_\tau\Vert_g^p \le\sum_{k=0}^\infty 2^{-kp}<\infty,
\]
the series $\sum_{\tau\in b} u_\tau$ converges and so we can define $h_b\in g(\ww,p)$ by
\[
h_b:=\sum_{\tau\in b} u_\tau.
\]
By Proposition~\ref{lem:Y5}, the basic sequence $\BB_\QQ[h_b]$ generated on $\QQ$ by $h_b$ belongs to the equivalence class of subsymmetric basic sequences of $g(\ww,p)$. We shall tell apart different elements in the family
\[
(\BB_\QQ[h_b] \colon b \text{ branch of } B_\infty)
\]
by comparing the fundamental functions $(\Phi[h_b] \colon b \text{ branch of } B_\infty)$.

Let $b= (b_j)_{j=1}^\infty$ be a branch and denote $\BB_\QQ[h_b]=(\yy_j)_{j=1}^\infty$. Fix $k\ge 0$.

On one hand suppose that $b_{k+1} = 1$. Then there is $z_1\prec \cdots \prec z_i\prec \cdots \prec z_{m_{k}}$ such that $z_i$ is a coordinate projection of $\yy_i$ for every $i=1$, \dots, $m_k,$ and
\[
\left \| \sum_{i=1}^{m_{k}} z_i \right\| \ge A_k:=2^{-k-1} \varepsilon_{k} m_k^{1/p}.
\]
Then $ \sum_{i=1}^{m_{k}} z_i$ is a coordinate projection of $\sum_{i=1}^{m_{k}} \yy_i$. Therefore, by $1$-unconditionality,
\[
\Phi[h_b](m_k) \ge \left\| \sum_{i=1}^{m_k} z_i\right\| \ge A_k.
\]

Suppose, on the other hand, that $b_{k+1} = 0$. Then, if we denote $\tau=(b_j)_{j=1}^{k+1}$, we have
\begin{align*}
\Vert h_b -f_\tau\Vert_g^p&=\left\Vert\sum_{\substack{\tau\in b\\ |\tau|\ge k+2}} u_{\tau}\right\Vert_g^p \\
\le&\sum_{\substack{\tau\in b\\ |\tau|\ge k+2}} \Vert u_{\tau}\Vert_g^p \\
&\le \sum_{n=k+2}^\infty 2^{-np} \varepsilon_{n-1}^p\\
&\le \frac{2^{p}(2^p-1)}{m_k} \sum_{n=k+2}^\infty 2^{-np}\\
&= \frac{2^{-kp}}{m_k}.
\end{align*}
Consequently, for every $m\in\NN$,
\[
(\Phi[h_b](m))^p\le \frac{m}{m_k}2^{-kp} +(\Phi[f_\tau](m))^p.
\]
Choosing $m=m_k$ we obtain
\[
\Phi[h_b](m_k)\le B_k:=(k^{-p} A_k^p + 2^{-kp})^{1/p}.
\]

A branch $b$ is univocally determined by the set $A(b) = \{k \ge 0\colon b_{k+1} = 1\}$. Conversely, given $A\subseteq\NN\cup\{0\}$, there is a branch $b$ such that $A(b)=A$. If $b$ and $b'$ are branches with $A(b)\setminus A(b')$ infinite then, since $\lim_k A_k/B_k=\infty$,
\[
\sup_m \frac{\Phi[h_b](m)}{\Phi[h_{b'}](m)} \ge \sup_{k\in A(b)\setminus A(b')} \frac{\Phi[h_b](m_k)}{\Phi[h_{b'}](m_k)}
\ge \sup_{k\in A(b)\setminus A(b')} \frac{A_k}{B_k} =\infty.
\]
We infer that $\BB[h_{b'}]$ does not dominate $\BB[h_b]$. So, if both $A(b)\setminus A(b')$ and $A(b')\setminus A(b)$ are infinite, $\BB[h_b]$ are $\BB[h_{b'}]$ are incomparable. Now the latter part of the theorem follows from Lemma~\ref{Lemma:TotallyDisjoint}.

The lattice structure of $\BB(\QQ,\ww,p)$ yields $\BB[h_b]\lesssim_1 \BB[h_{b'}]$ if $A(b)\subseteq A(b')$. So, the former part of the theorem follows from Lemma~\ref{Lemma:TotallyOrdered}.
\end{proof}

\subsection*{Acknowledgment}
The authors would like to thank the anonymous referees for their insight and their helpful suggestions.

\begin{bibsection}
\begin{biblist}

\bib{AA2017}{article}{
author={Albiac, F.},
author={Ansorena, J.~L.},
title={Isomorphic classification of mixed sequence spaces and of Besov
spaces over $[0,1]^d$},
journal={Math. Nachr.},
volume={290},
date={2017},
number={8-9},
pages={1177--1186},
%issn={0025-584X},
%review={\MR{3666992}},
%doi={10.1002/mana.201600236},
}

\bib{AADK2019}{article}{
author={Albiac, F.},
author={Ansorena, J.~L.},
author={Dilworth, S.~J.},
author={Kutzarova, D.},
title={Non-superreflexivity of Garling sequence spaces and applications
to the existence of special types of conditional bases},
journal={Studia Math.},
volume={251},
date={2020},
number={3},
pages={277--288},
%issn={0039-3223},
%review={\MR{4048611}},
%doi={10.4064/sm180910-1-2},
}
\bib{AALW2018}{article}{
author={Albiac, F.},
author={Ansorena, J.~L.},
author={Leung, D.},
author={Wallis, B.},
title={Optimality of the rearrangement inequality with applications to
Lorentz-type sequence spaces},
journal={Math. Inequal. Appl.},
volume={21},
date={2018},
number={1},
pages={127--132},
%issn={1331-4343},
%review={\MR{3716214}},
%doi={10.7153/mia-2018-21-10},
}

\bib{AAW2018}{article}{
author={Albiac, F.},
author={Ansorena, J.~L.},
author={Wallis, B.},
title={Garling sequence spaces},
journal={J. Lond. Math. Soc. (2)},
volume={98},
date={2018},
number={1},
pages={204--222},
%issn={0024-6107},
%review={\MR{3847238}},
%doi={10.1112/jlms.12129},
}

\bib{AK2016}{book}{
author={Albiac, F.},
author={Kalton, N.~J.},
title={Topics in Banach space theory},
series={Graduate Texts in Mathematics},
volume={233},
edition={2},
%note={With a foreword by Gilles Godefory},
publisher={Springer, [Cham]},
date={2016},
pages={xx+508},
%isbn={978-3-319-31555-3},
%isbn={978-3-319-31557-7},
%revie\ww={\MR{3526021}},
%doi={10.1007/978-3-319-31557-7},
}

\bib{A1977}{article}{
author={Altshuler, Z.},
title={A Banach space with a symmetric basis which contains no $\ell_p$ or $c_0$, and all its symmetric basic sequences are equivalent},
journal={Compositio Math.},
volume={35},
date={1977},
pages={189--195}
}

\bib{ACL1973}{article}{
author={Altshuler, Z.},
author={Casazza, P.~G.},
author={Lin, B.~L.},
title={On symmetric basic sequences in Lorentz sequence spaces},
journal={Israel J. Math.},
volume={15},
date={1973},
pages={140--155},
%issn={0021-2172},
%review={\MR{0328553 (48 \#6895)}},
}

\bib{Ansorena2018}{article}{
author = {Ansorena, J.~L.},
title = {A note on subsymmetric renormings of Banach spaces},
journal={Quaest. Math.},
volume={41},
date={2018},
number={5},
pages={615--628},
%issn={1607-3606},
%review={\MR{3836410}},
%doi={10.2989/16073606.2017.1393704},
}

\bib{BL1984}{book}{
author={Beauzamy, B.},
author={Laprest\'{e}, J.-T.},
title={Mod\`eles \'{e}tal\'{e}s des espaces de Banach},
language={French},
series={Travaux en Cours. [Works in Progress]},
publisher={Hermann, Paris},
date={1984},
pages={iv+210},
%isbn={2-7056-5965-X},
%review={\MR{770062}},
}

\bib{DilworthSari2008}{article}{
author={Dilworth, S.~J.},
author={Sar\i , B.},
title={Orlicz sequence spaces with denumerable sets of symmetric sequences},
conference={
title={Banach spaces and their applications in analysis},
},
book={
publisher={Walter de Gruyter, Berlin},
},
date={2007},
pages={77--82},
%review={\MR{2374701}},
}

\bib{Garling1968}{article}{
author={Garling, D.~J.~H.},
title={Symmetric bases of locally convex spaces},
journal={Studia Math.},
volume={30},
date={1968},
pages={163--181},
}

\bib{KadetsPel1962}{article}{
author={Kadec, M.~I.},
author={Pe{\l}czy{\'n}ski, A.},
title={Bases, lacunary sequences and complemented subspaces in the spaces
$L_{p}$},
journal={Studia Math.},
volume={21},
date={1961/1962},
pages={161--176},
}

\bib{FJ1974}{article}{
author={Figiel, T.},
author={Johnson, W.~B.},
title={A uniformly convex Banach space which contains no $\ell_{p}$},
journal={Compositio Math.},
volume={29},
date={1974},
pages={179--190},
%issn={0010-437X},
%review={\MR{0355537}},
}

\bib{FG2015}{article}{
author={Finol, C.~E.},
author={Gonz\'alez, M.~J.},
title={The structure of symmetric basic sequences with applications to a class of Orlicz sequence spaces},
journal={J. Math. Anal. Appl.},
volume={426},
date={2015},
number={1},
pages={380--391},
%issn={0022-247X},
%review={\MR{3306379}},
%doi={10.1016/j.jmaa.2014.10.073},
}

\bib{GM1993}{article}{
author={Gowers, W. T.},
author={Maurey, B.},
title={The unconditional basic sequence problem},
journal={J. Amer. Math. Soc.},
volume={6},
date={1993},
number={4},
pages={851--874},
%issn={0894-0347},
%review={\MR{1201238}},
%doi={10.2307/2152743},
}

\bib{KPR}{book}{
author={Kalton, N.~J.},
author={Peck, N.~T.},
author={Roberts, J.~W.},
title={An $F$-space sampler},
series={London Mathematical Society Lecture Note Series},
volume={89},
publisher={Cambridge University Press, Cambridge},
date={1984},
pages={xii+240},
%isbn={0-521-27585-7},
%review={\MR{808777}},
%doi={10.1017/CBO9780511662447},
}

\bib{KMP2012}{article}{
author={Kutzarova, D.},
author={Manoussakis, A.},
author={Pelczar-Barwacz, A.},
title={Isomorphisms and strictly singular operators in mixed Tsirelson spaces},
journal={J. Math. Anal. Appl.},
volume={388},
date={2012},
pages={3900--3921}
}

\bib{Lindberg1973}{article}{
author={Lindberg, K.},
title={On subspaces of Orlicz sequence spaces},
journal={Studia Math.},
volume={45},
date={1973},
pages={119--146},
%issn={0039-3223},
%review={\MR{0361721}},
%doi={10.4064/sm-45-2-119-146},
}

\bib{LT1971}{article}{
author={Lindenstrauss, J.},
author={Tzafriri, L.},
title={On Orlicz sequence spaces},
journal={Israel J. Math.},
volume={10},
date={1971},
pages={379--390},
%issn={0021-2172},
%review={\MR{0313780 (47 \#2334)}},
}

\bib{LT1972}{article}{
author={Lindenstrauss, J.},
author={Tzafriri, L.},
title={On Orlicz sequence spaces. II},
journal={Israel J. Math.},
volume={11},
date={1972},
pages={355--379},
%issn={0021-2172},
%review={\MR{0310592}},
%doi={10.1007/BF02761463},
}

\bib{LT1973}{article}{
author={Lindenstrauss, J.},
author={Tzafriri, L.},
title={On Orlicz sequence spaces. III},
journal={Israel J. Math.},
volume={14},
date={1973},
pages={368--389},
%issn={0021-2172},
%review={\MR{0322476}},
%doi={10.1007/BF02764715},
}

\bib{LT1977}{book}{
author={Lindenstrauss, J.},
author={Tzafriri, L.},
title={Classical Banach spaces. I},
note={Sequence spaces;
Ergebnisse der Mathematik und ihrer Grenzgebiete, Vol. 92},
publisher={Springer-Verlag, Berlin-New York},
date={1977},
pages={xiii+188},
}

\bib{OS1994}{article}{
author={Odell, E.},
author={Schlumprecht, T.},
title={The distortion problem},
journal={Acta Math.},
volume={173},
date={1994},
number={2},
pages={259--281},
%issn={0001-5962},
%review={\MR{1301394}},
%doi={10.1007/BF02398436},
}

\bib{R1981}{article}{
author={Read, C.~J.},
title={A Banach space with, up to equivalence, precisely two symmetric bases},
journal={Israel J. Math.},
volume={40},
date={1981},
pages={33--53}
}

\bib{Sari2007}{article}{
author={Sari, B.},
title={On the structure of the set of symmetric sequences in Orlicz sequence spaces},
journal={Canad. Math. Bull.},
volume={50},
date={2007},
number={1},
pages={138--148},
%issn={0008-4395},
%review={\MR{2296633}},
%doi={10.4153/CMB-2007-014-3},
}

\bib{Schlumprecht1991}{article}{
author={Schlumprecht, T.},
title={An arbitrarily distortable Banach space},
journal={Israel J. Math.},
volume={76},
date={1991},
number={1-2},
pages={81--95},
%issn={0021-2172},
%review={\MR{1177333}},
%doi={10.1007/BF02782845},
}

\bib{Singer1961}{article}{
author={Singer, I.},
title={On Banach spaces with a symmetric basis},
journal={Rev. Math. Pures Appl.},
volume={6},
date={1961},
pages={159-166},
}

\bib{Singer1962}{article}{
author={Singer, I.},
title={Some characterizations of symmetric bases in Banach spaces},
journal={Bull. Acad. Polon. Sci. S\'er. Sci. Math. Astronom. Phys.},
volume={10},
date={1962},
pages={185--192},
%issn={0001-4117},
%review={\MR{0147880}},
}

\bib{Singer1970}{book}{
author={Singer, I.},
title={Bases in Banach spaces. I},
note={Die Grundlehren der mathematischen Wissenschaften, Band 154},
publisher={Springer-Verlag, New York-Berlin},
date={1970},
pages={viii+668},
}

\bib{Cirel1974}{article}{
author={Tsirelson, B.~S.},
title={It is impossible to imbed $\ell_{p}$ or $c_{0}$ into an arbitrary Banach space},
language={Russian},
journal={Funkcional. Anal. i Prilo\v zen.},
volume={8},
date={1974},
number={2},
pages={57--60},
%issn={0374-1990},
%review={\MR{0350378}},
}

\end{biblist}
\end{bibsection}

\end{document}